\newcommand{\M}{\mathcal{M}}
\newcommand{\MM}{\mathfrak{M}}
\newcommand{\LL}{\mathcal{L}}
\newcommand{\U}{\mathcal{U}}
\newcommand{\V}{\mathcal{V}}
\newcommand{\OO}{\hat{O}}
\newcommand{\Z}{\Bbb Z}
\newcommand{\RR}{\Bbb R}
\newcommand{\TT}{\Bbb T}
\newcommand{\QQ}{\Bbb Q}
\newcommand{\C}{\Bbb C}
\newcommand{\aveN}{\frac{1}{N}\sum_{n\leq N} }
\newcommand{\aveM}{\frac{1}{M}\sum_{n \leq M} }
\newcommand{\limaveN}{\lim_{N\to\infty} \aveN}
\renewcommand{\hat}[1]{\widehat{#1}}
\newcommand{\ol}[1]{\overline{#1}}
\newcommand{\veps}{\varepsilon}
\newcommand{\mps}{measure-preserving system}
\newtheorem{ex}{Example}
\newtheorem{theorem}{Theorem}
\newtheorem{definition}[theorem]{Definition}
\newtheorem{proposition}[theorem]{Proposition}
\newtheorem{prop}[theorem]{Proposition}
\newtheorem{lemma}[theorem]{Lemma}
\newtheorem{remark}[theorem]{Remark}
\DeclareMathOperator{\lcm}{lcm}
\DeclareMathOperator{\N}{\mathbb{N}}
\numberwithin{theorem}{section}
\begin{document}
\title{(Uniform) Convergence of Twisted Ergodic Averages}
\author{Tanja Eisner}
\address{ Institute of Mathematics, University of Leipzig\\
P.O. Box 100 920, 04009 Leipzig, Germany}
\email{eisner@math.uni-leipzig.de}

\author{Ben Krause}
\address{UCLA Math Sciences Building\\
         Los Angeles,CA 90095-1555}
\email{benkrause23@math.ucla.edu}
\date{\today}
\maketitle

\begin{abstract}
Let $T$ be an ergodic measure-preserving transformation on a non-atomic probability space $(X,\Sigma,\mu)$.
%
%$(X,\Sigma,\mu)$ be a non-atomic probability space, equipped with $T$ an ergodic measure-preserving $\Z$-action
%\[ T^n f(x):= f( T^n x). \]
We prove uniform extensions of the Wiener-Wintner theorem in two settings:

For averages involving weights coming from Hardy field functions, $p$:
\[ \left\{ \frac{1}{N} \sum_{n\leq N} e( p(n) ) T^{n}f(x) \right\} \]
and for ``twisted'' polynomial ergodic averages:
\[ \left\{ \frac{1}{N} \sum_{n\leq N} e(n \theta) T^{P(n)}f(x) \right\} \]
for certain classes of badly approximable $\theta \in [0,1]$.

We also give an elementary proof that the above twisted polynomial averages converge pointwise $\mu$-a.e.\ for $f \in L^p(X), \ p >1,$ and arbitrary $\theta \in [0,1]$.
\end{abstract}

\section{Introduction}

Let $T$ be an ergodic invertible measure-preserving transformation on  a non-atomic probability space $(X,\Sigma,\mu)$, and denote by $T$ its Koopman operator given by
\[
(Tf)(x):= f( T x).
\]
The study of pointwise convergence of averages formed from the iterates $\{T^n\}$ began in $1931$ with the classical pointwise ergodic theorem of Birkhoff \cite{BI}:
\begin{theorem}
For any $f \in L^1(X)$ the averages
\[ \left\{ \frac{1}{N} \sum_{n \leq N} T^nf(x) \right\} \]
converge $\mu$-a.e. to the space mean $\int_X f\,d\mu$.
\end{theorem}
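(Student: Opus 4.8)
The theorem is Birkhoff's pointwise ergodic theorem, stated for ergodic $T$ so that the limit is already displayed as the constant $\int_X f\,d\mu$. The plan is the classical two-part argument: a weak-type maximal inequality together with a.e.\ convergence on a dense subclass, glued by a standard oscillation estimate, after which invariance and density identify the limit. It is convenient to work with the $0$-indexed averages $A_Nf(x):=\frac1N\sum_{n=0}^{N-1}T^nf(x)$, whose a.e.\ limit agrees with that of the theorem's averages $T(A_Nf)$; write $Mf:=\sup_{N\ge1}|A_Nf|$ for the associated maximal function.

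\emph{Step 1: the maximal ergodic theorem.} For real $f\in L^1(X)$ put $S_Nf:=\sum_{n=0}^{N-1}T^nf=NA_Nf$ and $M_Nf:=\max(0,S_1f,\dots,S_Nf)$. From the identity $S_kf=f+T(S_{k-1}f)$ and the positivity of $T$ one gets $S_kf\le f+T(M_Nf)$ for $1\le k\le N$, hence $f\ge M_Nf-T(M_Nf)$ on $\{M_Nf>0\}$; integrating over that set and using $\int_XT(M_Nf)\,d\mu=\int_XM_Nf\,d\mu$ gives $\int_{\{M_Nf>0\}}f\,d\mu\ge0$, and $N\to\infty$ (dominated convergence) yields $\int_{\{\sup_NS_Nf>0\}}f\,d\mu\ge0$. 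Applying this with $f$ replaced by $f-\lambda$ gives $\lambda\,\mu(E_\lambda)\le\int_{E_\lambda}f\,d\mu\le\|f\|_{L^1(X)}$, where $E_\lambda:=\{\sup_NA_Nf>\lambda\}$; doing the same with $-f$ and combining yields the weak-$(1,1)$ bound
\[
\mu\big(\{Mf>\lambda\}\big)\ \le\ \frac{2}{\lambda}\,\|f\|_{L^1(X)}\qquad(\lambda>0).
\]
(Alternatively, this transfers from the weak-$(1,1)$ inequality for the one-sided Hardy--Littlewood maximal operator on $\Z$ by Calder\'on's transference principle.)

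\emph{Steps 2--3: convergence and the limit.} By von Neumann's mean ergodic theorem, the $T$-invariant functions together with the coboundaries $\{h-Th:h\in L^\infty(X)\}$ span a dense subspace of $L^2(X)$, hence of $L^1(X)$; for $f'=g+(h-Th)$ with $Tg=g$, $h\in L^\infty$, telescoping gives $A_Nf'=g+\tfrac1N(h-T^{N}h)\to g$ uniformly, so $A_Nf'$ converges everywhere and $\int_XA_Nf'\,d\mu=\int_Xf'\,d\mu$. Now fix $f\in L^1(X)$, set $\Omega f:=\limsup_NA_Nf-\liminf_NA_Nf$, and note $\Omega f\le\Omega f'+2M(f-f')=2M(f-f')$ pointwise for $f'$ in the dense class; Step~1 then gives $\mu(\{\Omega f>\veps\})\le\mu(\{M(f-f')>\veps/2\})\le 4\veps^{-1}\|f-f'\|_{L^1(X)}$, and letting $\|f-f'\|_{L^1}\to0$ and then $\veps\downarrow0$ shows $A_Nf$ converges $\mu$-a.e.\ to a $T$-invariant --- hence, $T$ being ergodic, $\mu$-a.e.\ constant --- function $\Phi(f)\in\RR$. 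The functional $f\mapsto\Phi(f)$ is linear, and by Step~1 it is bounded on $L^1(X)$ (if $\Phi(f)\ne0$ then $\{Mf>|\Phi(f)|/2\}$ has full measure, forcing $|\Phi(f)|\le 4\|f\|_{L^1}$); since it agrees with $f\mapsto\int_Xf\,d\mu$ on the dense class, the two coincide, which is the claim of the theorem.

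\emph{The main obstacle.} Everything after Step~1 is soft functional analysis; the entire weight of the proof rests on the maximal ergodic theorem of Step~1, i.e.\ on showing that $M$ is of weak type $(1,1)$ --- proved here by Garsia's ``rising sun'' rearrangement argument or, alternatively, by transference from $\Z$.
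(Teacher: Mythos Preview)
The paper does not actually prove this statement: it is quoted in the introduction as Birkhoff's classical 1931 result \cite{BI}, with no proof given. Your proposal is a correct rendition of the standard modern proof (Garsia's maximal ergodic inequality plus the von Neumann decomposition and an oscillation/density argument), so there is nothing to compare against --- your write-up simply supplies what the paper takes for granted.
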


This result was extended to more general polynomial averages in the late eighties by Bourgain, who proved the following celebrated theorem \cite{B1}:

\begin{theorem}
Let $P$ be a polynomial with integer coefficients. For any $f \in L^p(X)$, $p >1$,
the averages
\[ \left\{ \frac{1}{N} \sum_{n \leq N} T^{P(n)} f(x) \right\} \]
converge $\mu$-a.e..
\end{theorem}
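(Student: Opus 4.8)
The plan is to follow Bourgain's circle-method strategy, organized as a transference step, a Fourier-analytic maximal inequality on $\Z$, and an oscillation estimate that upgrades the maximal bound to genuine pointwise convergence.

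\textbf{Transference.} First I would invoke Calder\'on's transference principle to pass to the integer shift system: writing $A_Nf(x):=\frac1N\sum_{n\le N}f(x+P(n))$ for $f\colon\Z\to\C$, the $L^p(X)$ statement follows once one has (a) the $\ell^p(\Z)$ maximal inequality $\|\sup_N|A_Nf|\|_{\ell^p}\lesssim_{p}\|f\|_{\ell^p}$ for every $p>1$, and (b) $\mu$-a.e.\ convergence of $\frac1N\sum_{n\le N}T^{P(n)}f$ for $f$ in an $L^p$-dense subclass. Since $L^\infty(X)$ is dense in every $L^p(X)$ and sits inside $L^2(X)$, it suffices to prove a.e.\ convergence for $f\in L^2(X)$; transferring once more, this will follow from a quantitative $\ell^2(\Z)$ oscillation inequality
\[ \Big\|\big(\textstyle\sum_j\sup_{N_j\le N<N_{j+1}}|A_Nf-A_{N_j}f|^2\big)^{1/2}\Big\|_{\ell^2}\lesssim\|f\|_{\ell^2}, \]
uniform over increasing $(N_j)$, which I would establish separately for the lacunary scales $N_j=2^j$ (``long'' oscillation) and for $N$ inside a single dyadic block $[2^j,2^{j+1})$ (``short'' variation).

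\textbf{The circle method.} On the Fourier side $A_N$ acts as the multiplier $m_N(\xi)=\frac1N\sum_{n\le N}e(P(n)\xi)$ on $\TT$. By Weyl's inequality $|m_N(\xi)|\lesssim N^{-\delta}$, with $\delta=\delta(\deg P)>0$, unless $\xi$ lies within $N^{-\deg P+\veps}$ of some $a/q$ with $q\le N^{\veps}$. On such major arcs, Taylor expansion together with the standard Gauss-sum analysis gives
\[ m_N(\xi)\approx S(a/q)\,\Psi_N(\xi-a/q),\qquad S(a/q):=\tfrac1q\sum_{r\bmod q}e\big(P(r)\tfrac aq\big), \]
where $\Psi_N$ is a smooth Archimedean multiplier (a rescaling of $\int_0^1 e(P(Nt)\,\cdot\,)\,dt$), the error is $\ell^2$-negligible after summation over dyadic $N$, and the decisive gain is $|S(a/q)|\lesssim q^{-1/\deg P+\veps}$.

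\textbf{Assembly, and the expected obstacle.} The minor-arc part of $m_N$ is pointwise $O(N^{-\delta})$, so its dyadic pieces are square-summable in $j$ and contribute harmlessly to the $\ell^2$ oscillation bound; for the $\ell^p$ maximal bound with $p$ near $1$ one interpolates this against a trivial estimate or runs a Rademacher--Menshov/restricted-type argument. The major-arc part is a Gauss-sum-weighted superposition, over $q\lesssim N^{\veps}$, of modulated copies of the continuous polynomial averaging operator, so its maximal and variational $\ell^2$ bounds descend from the Euclidean estimates (Stein--Wainger for the maximal function, L\'epingle-type for the variation) via a discrete multiplier theorem --- Ionescu--Wainger, or Bourgain's original hands-on argument exploiting the arithmetic structure of $\{a/q:q\le N^{\veps}\}$ --- which is precisely where the weights $q^{-1/\deg P+\veps}$ get summed. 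Convergence of the major-arc part then reduces to convergence of the continuous averages plus the fact that, for fixed $f$, all but finitely many $q$ contribute an arbitrarily small $\ell^2$ error; combined with the minor-arc smallness this yields the $\ell^2$ oscillation inequality, hence a.e.\ convergence on $L^2(X)$, and then the $\ell^p$ maximal inequality and density settle all $p>1$. The hard part will lie entirely in the circle-method estimates: producing Weyl-type bounds uniform enough to survive summation over scales, and transferring the continuous maximal and variational inequalities to the arithmetic setting with the Gauss-sum weights under control --- the latter being the analytic heart, and most delicate for $p$ close to $1$, where $\ell^2$ orthogonality is unavailable.
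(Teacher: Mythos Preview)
The paper does not give its own proof of this statement: it is quoted in the introduction as Bourgain's theorem, with a citation to \cite{B1}, and serves only as background for the paper's new results. So there is no in-paper proof to compare your proposal against.

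That said, your sketch is an accurate outline of Bourgain's original argument --- transference to $\Z$, circle-method decomposition of the multiplier $m_N$, Weyl bounds on minor arcs, Gauss-sum/continuous-multiplier approximation on major arcs, and a multi-frequency maximal/oscillation lemma to sum the major-arc pieces --- and would, if carried out with care, prove the theorem.

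It is worth noting that the paper's Section~4, which proves the \emph{twisted} analogue (Theorem~\ref{main}), follows a closely related but not identical route. The paper also transfers to $\Z$ and runs the circle method, but instead of your oscillation inequality it establishes an $r$-\emph{variational} bound $\|\V^r(K_N^\theta*f)\|_{\ell^2}\lesssim\|f\|_{\ell^2}$ for $r>2$, which directly implies pointwise convergence without a separate long/short decomposition. The major-arc analysis there is organized around the $N$-$\theta$ rational approximates (a feature specific to the twist $e(n\theta)$), and the multi-frequency step is handled via Bourgain's Lemma~3.28 in \cite{B1} and the variational lemma from \cite{BK} rather than Ionescu--Wainger. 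Either packaging --- oscillation or variation --- suffices; the variational version is somewhat cleaner bookkeeping and gives a quantitative rate, while your oscillation formulation is closer to Bourgain's original presentation.
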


Prior to Bourgain's polynomial ergodic theorem, another generalization of Birkhoff's theorem was announced by Wiener and Wintner \cite{WW}, see Assani \cite{AWW} for more information and various proofs (here and throughout the paper, $e(t):= e^{2\pi i t}$ denotes the exponential):
\begin{theorem}[Wiener-Wintner]
For every $f \in L^1(X)$ there exists a subset $X' \subset X$ of full measure so that
the weighted averages
\[ \left\{ \frac{1}{N} \sum_{n \leq N} e(n \theta) T^n f(x) \right\} \]
converge for all $x \in X'$ and every $\theta \in [0,1]$ .
\end{theorem}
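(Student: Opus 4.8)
\emph{Proof proposal.} The plan is to combine a Banach--principle reduction with the classical splitting of $f$ along the Kronecker factor of $T$, the heart of the matter being a van der Corput estimate that is uniform in $\theta$. Write $A_N^\theta f(x):=\frac1N\sum_{n\le N}e(n\theta)T^nf(x)$ and record the crude bound $\sup_{N,\theta}|A_N^\theta f(x)|\le Mf(x)$, where $Mf(x):=\sup_N\frac1N\sum_{n\le N}|T^nf(x)|$ is of weak type $(1,1)$ by the Hardy--Littlewood ergodic maximal theorem. For fixed $f$ the conclusion of the theorem is equivalent to $Wf=0$ $\mu$-a.e., where
\[
Wf(x):=\sup_{\theta\in[0,1]}\ \limsup_{M,N\to\infty}\bigl|A_N^\theta f(x)-A_M^\theta f(x)\bigr|.
\]
Since $W$ is subadditive and $Wf\le 2Mf$ pointwise, if $g_k\to f$ in $L^1$ with $Wg_k=0$ a.e.\ then $\{Wf>\lambda\}$ sits, modulo a null set, inside $\{2M(f-g_k)>\lambda\}$, whose measure is $\ll\|f-g_k\|_1/\lambda\to0$; hence $Wf=0$ a.e., and it suffices to prove the theorem for $f$ in a dense subset of $L^1$, e.g.\ for $f\in L^\infty$, and --- running the same argument in $L^2$ --- for $f$ in any family with closed linear span $L^2$.

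Fix $f\in L^\infty$. Let $K\subseteq L^2(X)$ be the closed linear span of the eigenfunctions of $T$ and $P$ the orthogonal projection onto $K$. As $K$ is the $L^2$-space of a $T$-invariant sub-$\sigma$-algebra (the Kronecker factor) and $P$ the corresponding conditional expectation, $P$ is also a contraction on $L^\infty$, so $f=Pf+(f-Pf)$ with $Pf,\ f-Pf\in L^\infty$. If $T\varphi=e(\alpha)\varphi$ then $A_N^\theta\varphi(x)=\varphi(x)\cdot\frac1N\sum_{n\le N}e(n(\theta+\alpha))$ converges for every $\theta$ and every $x$; this persists for finite linear combinations of eigenfunctions, and hence, by the $L^2$ form of the reduction, for every element of $K$ --- in particular for $Pf$. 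So it remains to treat $g:=f-Pf\in L^\infty$, which is orthogonal to every eigenfunction of $T$.

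For such $g$ I would apply van der Corput's inequality
\[
\Bigl|\frac1N\sum_{n\le N}a_n\Bigr|^2\ \ll\ \frac1H\cdot\frac1N\sum_{n\le N}|a_n|^2\ +\ \frac1H\sum_{h=1}^{H}\Bigl|\frac1N\sum_{n=1}^{N-h}\overline{a_{n+h}}\,a_n\Bigr|\qquad(1\le H\le N)
\]
with $a_n:=e(n\theta)T^ng(x)$. Then $|a_n|=|T^ng(x)|\le\|g\|_\infty$, while $\overline{a_{n+h}}a_n=e(-h\theta)\,T^n\!\bigl(g\cdot\overline{T^hg}\bigr)(x)$, so the only $\theta$-dependence in the correlation term is a unimodular scalar that drops out under $|\cdot|$. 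Writing $g_h:=g\cdot\overline{T^hg}\in L^\infty$ and letting $N\to\infty$ (the $n>N-h$ boundary terms contribute $O(H\|g\|_\infty^2/N)\to0$ for fixed $H$), Birkhoff's theorem applied to the countably many $g_h$ gives, on one set of full measure,
\[
\limsup_{N\to\infty}\ \sup_{\theta\in[0,1]}|A_N^\theta g(x)|^2\ \ll\ \frac{\|g\|_\infty^2}{H}+\frac1H\sum_{h=1}^{H}\bigl|\la T^hg,g\ra\bigr|\qquad\text{for every }H\in\NN.
\]
By the spectral theorem for the unitary $T$ one has $\la T^hg,g\ra=\int_{\TT}e(ht)\,d\sigma_g(t)$ for the positive spectral measure $\sigma_g$ of $g$; an atom of $\sigma_g$ at $t_0$ would make $E(\{t_0\})g$ a nonzero eigenfunction of $T$, hence an element of $K$ with $\la E(\{t_0\})g,g\ra=\sigma_g(\{t_0\})>0$, contradicting $g\perp K$, so $\sigma_g$ is non-atomic. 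Wiener's lemma then gives $\frac1H\sum_{h\le H}|\la T^hg,g\ra|^2\to0$, whence $\frac1H\sum_{h\le H}|\la T^hg,g\ra|\to0$ by Cauchy--Schwarz; letting $H\to\infty$ we get $\sup_{\theta}|A_N^\theta g(x)|\to0$ a.e.\ --- convergence for all $\theta$ at once, which is stronger than claimed. Combined with the $Pf$ part and the reductions above, this proves the theorem.

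I expect the main obstacle to be precisely this van der Corput plus Wiener's lemma step: one must set up the estimate so that the supremum over $\theta$ genuinely passes through the correlation terms --- which it does, because those terms carry only a $\theta$-dependent unimodular prefactor --- and then convert ``$g$ is orthogonal to all eigenfunctions'' into the quantitative statement that the Ces\`aro averages of the correlation coefficients $\la T^hg,g\ra$ vanish. The surrounding functional analysis --- the Banach-principle reduction that legitimises treating all $\theta$ simultaneously, and the identification of $P$ as both the $L^2$-projection onto the Kronecker factor and an $L^\infty$-contraction --- is standard but should be stated with care.
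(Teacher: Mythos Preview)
The paper does not prove this theorem: it is quoted as classical background, with a reference to Assani's monograph for ``various proofs,'' and the Remark immediately following the statement records Bourgain's observation that for $f$ orthogonal to the Kronecker factor the averages converge to zero \emph{uniformly} in $\theta$. Your proposal is precisely a clean write-up of that Bourgain argument for the orthogonal part, coupled with the trivial treatment of eigenfunctions and the maximal-inequality closure, and it is correct as written.

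A few remarks on the alignment with what the paper does develop. Your Banach-principle step is exactly the content of the paper's Lemma~2.1 (the ``Wiener--Wintner type Banach Principle''), and your van der Corput inequality is the paper's Lemma~2.2; so the scaffolding you use is in fact set up in \S2, just applied there to Hardy-field weights rather than to the linear weights $e(n\theta)$. Two small points worth making explicit in a final version: (i) you are silently using the standing ergodicity hypothesis when you invoke Birkhoff to replace $\frac1N\sum_n T^n g_h(x)$ by the space integral $\langle g,T^hg\rangle$; and (ii) the measurability of $Wf$ (a supremum over uncountably many $\theta$) is justified because $\theta\mapsto A_N^\theta f(x)$ is continuous, so the supremum may be taken over the rationals. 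Neither is a gap, but both deserve a word.
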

\begin{remark}
Bourgain \cite{BWW} observed that for $f$ orthogonal to eigenfunctions (i.e., to the Kronecker factor) of $T$ the above averages converge \emph{uniformly} in $\theta$ to zero. In addition, if $(X,T)$ is uniquely ergodic and $f$ is
continuous, then convergence is uniform in $x\in X$ too, see Assani \cite{AWW}.
\end{remark}

This result was in turn generalized to polynomial weights by Lesigne in \cite{lesigneWW}:

\begin{theorem}[Wiener-Wintner theorem for polynomial weights]
For every $f \in L^1(X)$ there exists a subset $X' \subset X$ of full measure so that
the weighted  averages
\[ \left\{ \frac{1}{N} \sum_{n \leq N} e(P(n)) T^n f(x) \right\} \]
converge for all $x \in X'$ and all real polynomials $P\in \RR[\cdot]$.
\end{theorem}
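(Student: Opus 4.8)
The plan is to split $f$ with the help of a suitable \emph{characteristic factor} and to run the whole argument as an induction on $d:=\deg P$; throughout, abbreviate $A_N(P)f(x):=\frac1N\sum_{n\le N}e(P(n))T^nf(x)$. Two routine reductions come first. Because $|A_N(P)f(x)|\le\frac1N\sum_{n\le N}|f|(T^nx)$ is dominated, uniformly in $N$ and in $P$, by the Birkhoff maximal function of $|f|$, which is of weak type $(1,1)$, a density argument together with a Borel--Cantelli step reduces the theorem to bounded $f$ and, at the same time, makes the exceptional set in that reduction independent of $P$. Secondly, since the conclusion asks only for a single full-measure set valid for every $P$, it suffices to produce, for each fixed $d\in\NN$, a full-measure set $X_d$ off which $\{A_N(P)f(x)\}_N$ converges for \emph{all} real $P$ with $\deg P\le d$; then $X'=\bigcap_d X_d$ does the job.

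Fix $d$ and write $f=\EE[f\mid Z_d]+f^{\perp}$, where $Z_d$ denotes the $d$-th Host--Kra--Ziegler factor (an inverse limit of $d$-step nilsystems) and $f^{\perp}$ lies in its orthogonal complement, which is characterised by the vanishing of the $(d+1)$-st Gowers--Host--Kra seminorm, $\|f^{\perp}\|_{U^{d+1}}=0$. For the non-characteristic part I would prove the stronger statement that $\sup_{\deg P\le d}|A_N(P)f^{\perp}(x)|\to0$ for a.e.\ $x$, so that uniformity in $P$ is built in. This follows, by induction on $d$, from the estimate: there is a constant $C_d$ so that for a.e.\ $x$ and all $g\in L^\infty(X)$,
\[
\limsup_{N\to\infty}\ \sup_{\deg Q\le d}\bigl|A_N(Q)g(x)\bigr|\ \le\ C_d\,\|g\|_{U^{d+1}}.
\]
The base case $d=0$ is Birkhoff's theorem ($\|g\|_{U^{1}}=|\int_X g\,d\mu|$). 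For the inductive step one applies the van der Corput inequality to $u_n=e(Q(n))T^ng(x)$: here $u_{n+h}\overline{u_n}=e\bigl(Q(n+h)-Q(n)\bigr)\,T^n\bigl((T^hg)\overline{g}\bigr)(x)$, and $n\mapsto Q(n+h)-Q(n)$ has degree $\le d-1$, so the supremum over $\deg Q\le d$ of the $h$-th term is bounded by $\sup_{\deg R\le d-1}|A_N(R)(\Delta_h g)(x)|$ with $\Delta_h g:=(T^hg)\overline{g}$. The inductive hypothesis (applied to each of the countably many functions $\Delta_h g$, $h\in\NN$, off a single null set), Cauchy--Schwarz in $h$, and the recursion $\|g\|_{U^{d+1}}^{2^{d+1}}=\lim_H\frac1H\sum_{h\le H}\|\Delta_h g\|_{U^{d}}^{2^{d}}$ then close the induction. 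In particular the case $d=1$ is the uniform Wiener--Wintner theorem of Bourgain quoted in the Remark above; note that nothing external is needed here --- the van der Corput inequality, Birkhoff's theorem, and the combinatorics of the $U^{k}$-seminorms suffice.

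For the characteristic part, $\EE[f\mid Z_d]$ is measurable with respect to $Z_d$; approximate it in $L^2$ by a function $F$ that is continuous on a genuine $d$-step nilsystem $G/\Gamma$ with translation $x\mapsto ax$, the approximation error being absorbed by the uniform maximal bound from the first paragraph. Then $A_N(P)F(x)=\frac1N\sum_{n\le N}e(P(n))F(a^nx)$, and since $n\mapsto e(P(n))$ for $\deg P\le d$ is a polynomial sequence of degree $d$ on the one-step nilmanifold $\RR/\Z$, the sequence $n\mapsto\bigl(e(P(n)),\,a^nx\bigr)$ is a polynomial sequence on the product nilmanifold $(\RR/\Z)\times(G/\Gamma)$. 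By Leibman's pointwise equidistribution theorem, the averages of any continuous function $\Phi$ along such a sequence converge for \emph{every} starting point; applying this with $\Phi(t,z)=e(t)F(z)$ shows that $A_N(P)F(x)$ converges for every $x$ and every $P$. Combining this with the previous paragraph and the Borel--Cantelli step produces $X_d$, and $X'=\bigcap_{d\ge1}X_d$.

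The crux --- and the only genuine difference from the classical single-$P$ statement --- is the uniformity over the uncountable family of polynomials $P$. In the non-characteristic step this is obtained not by a union bound but by carrying $\sup_{\deg P\le d}$ through every van der Corput iteration until it acts on a \emph{deterministic} quantity, a Gowers--Host--Kra seminorm that does not see $P$; the delicate points are the $P$-uniform control of the van der Corput error terms and the bookkeeping of the countable hierarchy of null sets. In the characteristic step the uniformity costs nothing, since Leibman's theorem has no exceptional starting points; the only work there is the passage from the inverse limit $Z_d$ to a bona fide nilsystem, which is handled by the $L^2$/maximal-function approximation. (Alternatively, following Lesigne, one may take as characteristic factor the maximal factor with quasi-discrete spectrum of order $d$ and replace Leibman's theorem by Weyl equidistribution along a tower of compact abelian group extensions; the proof that this factor is characteristic is again the displayed estimate.)
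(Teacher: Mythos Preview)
The paper does not prove this theorem; it is quoted in the introduction as a known result of Lesigne \cite{lesigneWW}, so there is no ``paper's own proof'' to compare against. Your argument is correct and is essentially the modern approach of Frantzikinakis \cite{frantWW} (also cited by the paper): the van der Corput induction yielding the $U^{d+1}$-bound on the non-characteristic part, followed by Leibman's equidistribution on the nilfactor for the characteristic part. Two remarks. First, your seminorm estimate $\limsup_N\sup_{\deg Q\le d}|A_N(Q)g(x)|\le C_d\|g\|_{U^{d+1}}$ is exactly the polynomial analogue of the paper's Theorem~\ref{thm:Hardy-seminorms}, and the same bookkeeping as there shows one can take $C_d=1$; the paper's Lemma~\ref{lemma:banach-ww} is precisely the Banach-principle step you sketch in your first paragraph. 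Second, your parenthetical alternative is the historically accurate one: Lesigne's original proof used the Abramov (quasi-discrete spectrum) factor rather than the Host--Kra tower, as the paper itself notes just after stating the theorem. Your route buys the stronger uniform statement $\sup_{\deg P\le d}|A_N(P)f^\perp(x)|\to 0$ at the cost of invoking the Host--Kra structure theorem and Leibman's theorem; Lesigne's route is more elementary on the structured side but gives only non-uniform convergence.
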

Although this list is by no means comprehensive, we remark that Lesigne's result has been further generalized and ``uniformized'' by Frantzikinakis \cite{frantWW}, Host, Kra \cite{HK}, and Eisner, Zorin-Kranich \cite{EZ}. 
We also mention here that the corresponding characteristic factor for Lesigne's averages, the \emph{Abramov factor}, is induced by generalized eigenfunctions of $T$ and is larger than the Kronecker factor; in particular, the limit of Lesigne's averages can be non-zero for functions which are orthogonal to the Kronecker factor.

\smallskip

There are two main aims of this paper.

We first extend Lesigne's result to weights coming from \emph{Hardy field} functions (see \S 2 below for the precise definition and references). 
As the weights come from Hardy field functions which are "far" from polynomials (defined below) the corresponding weighted ergodic averages always converge to zero -- unlike in the case of polynomial weights.
More precisely, our first main result in this direction can be stated as follows. (We refer the reader to $\S 2$ below for the precise definition of the class $\M_{\delta,M,m}$; informally, these are smooth functions which are uniformly ``$(\delta,M,m)$-far'' from the class of polynomials.)

\begin{theorem}\label{hardy}
For $0 < \delta < 1/2$, $m \geq 0$, and $M \geq 1$, consider the class of functions $\M_{\delta,M,m}$. Then for every $f \in L^1(X)$ there exists a subset $X' \subset X$ of full measure so that the averages
\begin{equation}\label{eq:hardy} 
\left\{ \frac{1}{N} \sum_{n \leq N} e(p(n)) T^n f(x) \right\} 
\end{equation}
converge to zero \emph{uniformly} in $p \in \M_{\delta,M,m}$ for all $x \in X'$, i.e.,
\[ \sup_{p \in \M_{\delta,M,m}} \left| \frac{1}{N} \sum_{n\leq N} e(p(n))T^nf(x) \right| \to 0 \quad \mu-\text{a.e.}.\]
Moreover, if $(X,\mu,T)$ is uniquely ergodic and $f \in C(X)$, one has moreover
\[ \sup_{p \in \M_{\delta,M,m}} \left\| \frac{1}{N} \sum_{n\leq N} e(p(n))T^nf  \right\|_{\infty} \to 0. \]
\end{theorem}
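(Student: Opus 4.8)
The plan is to establish the $\mu$-a.e.\ statement by a spectral reduction followed by a net/lacunary argument, and then to deduce the uniquely-ergodic refinement via a van der Corput induction.

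\emph{Spectral reduction and mean convergence.} Since $T$ is unitary on $L^2(X)$, for $f\in L^2(X)$ the spectral theorem supplies a positive measure $\sigma_f$ on $\TT$ with $\la T^kf,f\ra=\int_\TT e(k\alpha)\,d\sigma_f(\alpha)$ and $\sigma_f(\TT)=\|f\|_2^2$, whence
\[
\Bigl\|\tfrac1N\sum_{n\le N}e(p(n))T^nf\Bigr\|_{L^2(X)}^2=\int_\TT\Bigl|\tfrac1N\sum_{n\le N}e(p(n)+n\alpha)\Bigr|^2d\sigma_f(\alpha).
\]
Feeding in the uniform Weyl-type estimate for $\M_{\delta,M,m}$ from \S2 --- used in the strong form which is uniform in the extra linear phase $n\alpha$ and carries a power saving, $\sup_{p\in\M_{\delta,M,m}}\sup_{\alpha\in\TT}\bigl|\tfrac1N\sum_{n\le N}e(p(n)+n\alpha)\bigr|\ll N^{-c}$ with $c=c(\delta,M,m)>0$ --- gives $\sup_{p}\bigl\|\tfrac1N\sum_{n\le N}e(p(n))T^nf\bigr\|_{L^2}\ll N^{-c}\|f\|_2$ for every $f\in L^2(X)$.

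\emph{From mean to pointwise convergence.} To control the supremum over $p$ pointwise, for each $N$ I would take an $N^{-s}$-net $p_1,\dots,p_{K(N)}$ of $\M_{\delta,M,m}$ in the metric $\sup_{n\le N}|p(n)-p'(n)|$, with $K(N)\ll N^{O(1)}$ by the metric entropy bound for the class; since $|e(a)-e(b)|\le2\pi|a-b|$,
\[
\sup_{p}\Bigl|\tfrac1N\sum_{n\le N}e(p(n))T^nf(x)\Bigr|\le\max_{i\le K(N)}\Bigl|\tfrac1N\sum_{n\le N}e(p_i(n))T^nf(x)\Bigr|+2\pi N^{-s}Mf(x),
\]
where $M$ is the ergodic maximal operator. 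Restricting to a lacunary sequence $N_j=\lfloor\rho^j\rfloor$ and fixing $s>0$ small enough that $\sum_j\bigl(K(N_j)N_j^{-2c}+N_j^{-2s}\bigr)<\infty$, Chebyshev's inequality together with the $L^2$ maximal ergodic theorem and the Borel--Cantelli lemma yield $\sup_p\bigl|\tfrac1{N_j}\sum_{n\le N_j}e(p(n))T^nf(x)\bigr|\to0$ for $\mu$-a.e.\ $x$. The gaps $N_j\le N<N_{j+1}$ are filled via $\bigl|\tfrac1N\sum_{n\le N}e(p(n))T^nf(x)-\tfrac{N_j}{N}\tfrac1{N_j}\sum_{n\le N_j}e(p(n))T^nf(x)\bigr|\le\tfrac1{N_j}\sum_{N_j<n\le N_{j+1}}|f(T^nx)|$, whose $\limsup_j$ is at most $(\rho-1)\int_X|f|\,d\mu$ for a.e.\ $x$ by Birkhoff; letting $\rho\downarrow1$ gives $\sup_p\bigl|\tfrac1N\sum_{n\le N}e(p(n))T^nf(x)\bigr|\to0$ $\mu$-a.e.\ for $f\in L^2(X)$. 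Finally $\sup_{N,p}\bigl|\tfrac1N\sum_{n\le N}e(p(n))T^nf\bigr|\le Mf$ with $\|Mf\|_{1,\infty}\lesssim\|f\|_1$, so a.e.\ convergence to $0$ on the dense subspace $L^2(X)\subset L^1(X)$ extends to all of $L^1(X)$ by the Banach principle, proving the first assertion.

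\emph{The uniquely ergodic case.} For $f\in C(X)$ and arbitrary $x$, van der Corput's inequality bounds $\bigl|\tfrac1N\sum_{n\le N}e(p(n))f(T^nx)\bigr|^2$ by $O(H/N+1/H)+\tfrac2H\sum_{h=1}^{H-1}\bigl|\tfrac1N\sum_{n\le N}e(\Delta_hp(n))(T^hf\cdot\bar f)(T^nx)\bigr|$, where $\Delta_hp(n):=p(n+h)-p(n)$, reducing matters --- uniformly in $x$ and in $p$ --- to twisted averages of the continuous function $T^hf\cdot\bar f$ against the differenced Hardy weight. Since forward differencing sends $\M_{\delta,M,m}$ into a class of the same kind with the ``degree'' parameter lowered (with parameters depending on the bounded shift $h$), an induction on $m$ reduces everything to the base case of sub-linear Hardy weights far from polynomials, handled by combining unique ergodicity --- $\tfrac1N\sum_{n\le N}g(T^nx)\to\int g\,d\mu$ uniformly in $x$ for $g\in C(X)$ --- with the \S2 estimates; the contribution of the maximal equicontinuous (Kronecker) factor, which here is a \emph{continuous} factor $\pi\colon(X,T)\to(Z,R)$, is dispatched directly, since for $h\in C(Z)$ one has $\tfrac1N\sum_{n\le N}e(p(n))h(R^n\pi x)=\sum_\chi\hat h(\chi)\,\chi(\pi x)\,\tfrac1N\sum_{n\le N}e(p(n)+n\gamma_\chi)$ (with $\chi\circ R=e(\gamma_\chi)\chi$), which tends to $0$ uniformly in $x$ by the Weyl bound.

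\emph{Main obstacle.} Granting the \S2 estimates, the genuinely delicate points are: (a) that the Weyl bound be uniform in the additional linear phase $n\alpha$ \emph{and} carry a quantitative rate --- without the rate the net/lacunary argument above collapses; and (b) in the uniquely-ergodic case, keeping track of how the parameters $(\delta,M,m)$ transform under iterated forward differencing and how the induction interacts with the Kronecker factor, so that the van der Corput scheme actually closes and delivers a limit of zero for every $f\in C(X)$. I expect (b) to be the principal obstacle.
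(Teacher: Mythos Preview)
Your proposal has genuine gaps in the a.e.\ part, and even where correct it is much more elaborate than what is needed.

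\textbf{The spectral/entropy route does not close.} You rely on two inputs that are neither in \S2 nor easy to supply. First, the bound
\[
\sup_{p\in\M_{\delta,M,m}}\ \sup_{\alpha\in\TT}\ \Bigl|\tfrac1N\sum_{n\le N}e\bigl(p(n)+n\alpha\bigr)\Bigr|\ \ll\ N^{-c}
\]
is not proved in the paper; Lemma~2.7 only gives $\sup_{p\in\M_{\delta,M,0}}\bigl|\tfrac1N\sum_{n\le N}e(p(n))\bigr|\to0$, with no linear phase. For $m=0$ the class controls only $p$ and $p'$, so $p(s)+s\alpha$ has derivative tending to $\alpha$, destroying the first-derivative structure on which the Euler-summation argument rests, and you have no uniform handle on $p''$ to fall back on a second-derivative test. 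Second, the claim $K(N)\ll N^{O(1)}$ for an $N^{-s}$-net of $\M_{\delta,M,m}$ in $\sup_{n\le N}|p(n)-p'(n)|$ is almost certainly false: the class is infinite-dimensional, and even for $k=0$ the derivative constraint leaves $p(n+1)-p(n)$ free to range over an interval of length $\asymp n^{\alpha+\veps-1}$ once $p(1),\dots,p(n)$ are fixed, which forces exponential (in $N$) covering numbers at polynomial scales. The Hardy-field membership $p\in\U$ does not impose a finite-parameter description strong enough to rescue this.

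\textbf{What the paper actually does.} The paper runs the \emph{same} van der Corput induction on $m$ for both the a.e.\ and the uniquely ergodic statements; there is no need to treat them by different mechanisms. The base case $m=0$ is handled not via the Kronecker factor or spectral measures but via the von Neumann decomposition
\[
L^1(X)=\C\cdot 1\ \oplus\ \overline{\{f-Tf:\ f\in L^\infty\}}^{\,\|\cdot\|_1}.
\]
For constants one invokes Lemma~2.7. For $g=f-Tf$ with $f\in L^\infty$ one uses Abel summation: the relevant sum telescopes, and the error is controlled by $\tfrac1N\sum_{n<N}|e(p(n+1))-e(p(n))|\lesssim \tfrac{M}{N}\sum_{n<N}n^{\alpha+\veps-1}\le \tfrac{M}{N}\sum_{n<N}n^{-\delta}\to0$, uniformly in $p\in\M_{\delta,M,0}$. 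The Banach principle (Lemma~2.1) then passes to all of $L^1$. For the inductive step, $\Delta_h p$ is not itself in a lower class; rather, Taylor expansion gives $\Delta_h p(n)=q_h(n)+O_{M,k}(h^{k+1}n^{-\delta})$ with $q_h\in\M_{\delta,\tilde M,k-1}$, and this error must be carried through van der Corput. Your van der Corput sketch for the uniquely ergodic case is in the right spirit, but the Kronecker-factor detour is unnecessary and the base case you describe is not the one that works.
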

\begin{remark}
The requirement of invertibility is not used (or needed) in the proof of this theorem.
\end{remark}

We show moreover that for $p\in\U$ with type less than $m\in\N$, the averages (\ref{eq:hardy}) are bounded by the $(m+1)$st Gowers-Host-Kra uniformity seminorm of $f$ with certain quantitative uniformity in $p$, see Theorem \ref{thm:Hardy-seminorms} below for the precise formulation. This implies that for functions $f$ which are orthogonal to some Host-Kra factor, the averages (\ref{eq:hardy}) converge to $0$ uniformly in $p\in \U$ with corresponding growth bound.

\smallskip

Our second  aim is to study weighted averages with polynomial powers,
i.e., a combination of the Wiener-Wintner and Bourgain averages. Specifically, we consider pointwise convergence of weighted averages of the form
\[ \left\{ \frac{1}{N} \sum_{n \leq N} e(n \theta) T^{P(n)}f(x) \right\} \]
where $\theta \in [0,1]$ and $P$ is a polynomial with integer coefficients.

We state our main result in this direction in the case of the squares.
\begin{theorem}\label{badapp}
Let $0 < c <1$ and $E = E_c \subset [0,1]$ be a set of $c$-badly approximable numbers with upper Minkowski dimension strictly less than $1/16$.
Then for every $f \in L^2(X)$ there exists a subset $X' \subset X$ of full measure so that
\[\lim_{N\to\infty} \sup_{\theta \in E_c } \left| \frac{1}{N} \sum_{n\leq N} e(n\theta)T^{n^2} f(x) \right| = 0\]
holds for every $x\in X'$.
\end{theorem}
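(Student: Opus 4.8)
The plan is to run a standard Wiener--Wintner-type argument adapted to the polynomial-power setting, using a combination of Bourgain's circle method machinery and a metric entropy (chaining/covering) argument over the parameter set $E_c$.

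\medskip

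\noindent\textbf{Step 1: Oscillation/variational reduction and the single-$\theta$ bound.} For a fixed $\theta$, the averages $A_N^\theta f(x) := \frac{1}{N}\sum_{n\le N} e(n\theta) T^{n^2} f(x)$ converge $\mu$-a.e.\ by the elementary pointwise result announced in the abstract (the twisted polynomial averages converge for $f\in L^p$, $p>1$, and arbitrary $\theta$), so the issue is purely the \emph{uniformity} in $\theta\in E_c$. I would first record a quantitative maximal and oscillation inequality for $A_N^\theta$: there is $C$, independent of $\theta$, with $\big\|\sup_N |A_N^\theta f|\big\|_{L^2}\le C\|f\|_{L^2}$, and more importantly a square-function/oscillation bound $\big\|(\sum_j \sup_{N_j\le N< N_{j+1}}|A_N^\theta f - A_{N_j}^\theta f|^2)^{1/2}\big\|_{L^2}\lesssim \|f\|_{L^2}$ uniformly in $\theta$ along lacunary $N_j = 2^j$. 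These follow from Bourgain's circle-method analysis of $T^{n^2}$ (the relevant multipliers are $m_N^\theta(\beta) = \frac1N\sum_{n\le N} e(n\theta + n^2\beta)$, which are handled by the same minor/major arc decomposition since the linear phase $n\theta$ only shifts the major arcs), transferred from $\Z$ to $X$ via the Calderón transference principle.

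\medskip

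\noindent\textbf{Step 2: Lipschitz dependence on $\theta$ at fixed scale, and the entropy estimate.} The map $\theta\mapsto A_N^\theta f(x)$ is Lipschitz with constant $O(N)$ in the obvious pointwise sense, but that is far too lossy; instead I would exploit that, by Abel summation and the single-scale bounds, $|A_N^{\theta}f(x) - A_N^{\theta'}f(x)|$ can be controlled by $|\theta - \theta'|\cdot N$ \emph{times} a maximal function, and also trivially by $2\sup_M|A_M^{\theta}f(x)|^{1/2}(\dots)$-type interpolation, so that on a scale $N\approx 2^j$ one only needs to union-bound over an $\epsilon_j$-net of $E_c$ with $\epsilon_j \approx 2^{-j}$. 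The number of such net points is $\lesssim 2^{j\,d}$ where $d = \overline{\dim}_M E_c < 1/16$ is the upper Minkowski dimension. Combining the square-function bound from Step 1 with a union bound over this net and over scales $j$, the key quantity $\mathbb{E}\big[\sum_j \sup_{\theta\in E_c}|A_{2^{j+1}}^\theta f - A_{2^j}^\theta f|^2\big]$ is bounded by $\sum_j 2^{jd}\cdot 2^{-j\eta}\|f\|_{L^2}^2$ for some gain $\eta>0$ coming from the badly-approximable hypothesis (see Step 3); as long as $d < \eta$ this sum converges, forcing $\sup_{\theta\in E_c}|A_N^\theta f - A_{2^j}^\theta f|\to 0$ a.e.\ along, and then between, the dyadic scales. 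The numerology $1/16$ is exactly where the entropy exponent $d$ from $E_c$ balances the exponent $\eta$ extracted from the minor-arc/Weyl-sum estimates for $\sum e(n\theta + n^2\beta)$ when $\theta$ is $c$-badly approximable.

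\medskip

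\noindent\textbf{Step 3: The gain $\eta$ from badly approximable $\theta$ --- the main obstacle.} The crux, and the step I expect to be hardest, is producing a quantitative decay $\|(\sup_j |A_{2^{j+1}}^\theta f - A_{2^j}^\theta f|)\|_{L^2}\lesssim 2^{-j\eta}\|f\|_{L^2}$, or at least a variant summable against the entropy, that is \emph{uniform over $\theta\in E_c$}. This requires going inside Bourgain's argument: on the minor arcs one gets power-saving via Weyl's inequality for the quadratic Weyl sum, but the presence of the honest linear term $e(n\theta)$ with $\theta$ badly approximable must be shown not to create a spurious major arc --- precisely, one uses that $\|\theta q\|\gtrsim c/q$ for all $q$ (the $c$-bad approximability) to keep the completed sum $\sum_{n} e(n\theta + n^2 a/q)$ small for every denominator $q$ in the relevant Farey range, which is where the Minkowski-dimension smallness of $E_c$ re-enters to control how many "near-resonant" $\theta$ can conspire across scales. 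Assembling these estimates --- Bourgain's oscillation bound for $T^{n^2}$, the uniform circle-method control of the twisted multipliers, the Weyl-sum decay keyed to $c$-bad approximability, and the metric-entropy union bound governed by $\overline{\dim}_M E_c < 1/16$ --- and summing over dyadic scales yields the claimed a.e.\ convergence to zero, with $X'$ the full-measure set on which the summable square function is finite. The deterministic ($L^\infty$, uniquely ergodic) and general-polynomial $P$ analogues would then follow by the same scheme with the exponent $1/16$ replaced by a $P$-dependent constant.
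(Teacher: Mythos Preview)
Your overall strategy --- combine a quantitative decay estimate (from Weyl-sum bounds for badly approximable $\theta$) with a metric-entropy argument over $E_c$ --- matches the paper's approach. But the execution you sketch is considerably heavier than what is actually needed.

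The paper does \emph{not} use oscillation/variational inequalities for this theorem. Your Step~1 imports Bourgain's full circle-method machinery, but the paper sidesteps this entirely via the \emph{spectral theorem}: for bounded $f$ and fixed $\theta$,
\[
\Bigl\| \frac{1}{N}\sum_{n\le N} e(n\theta)\, T^{n^2} f \Bigr\|_{L^2(X)}^2
= \int_{[0,1]} \Bigl| \frac{1}{N}\sum_{n\le N} e(n\theta + n^2\alpha)\Bigr|^2 d\mu_f(\alpha)
\le \sup_{\alpha}\Bigl| \frac{1}{N}\sum_{n\le N} e(n\theta + n^2\alpha)\Bigr|^2.
\]
The right-hand side is a pure exponential sum, and a separate proposition establishes the uniform bound
$\sup_\alpha\bigl|\frac1N\sum e(n\theta+n^2\alpha)\bigr|\lesssim_\epsilon (cN^{1/32-\epsilon})^{-1}$
for $c$-badly approximable $\theta$ (via Weyl's inequality and a careful two-stage Dirichlet approximation). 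So one obtains polynomial decay of the \emph{averages themselves} in $L^2$, not of their differences; no oscillation inequality enters. The metric-entropy step is then essentially as you describe: take a $\frac{1}{10}N^{-\beta}$-net $\Delta_N$ in $E_c$ with $\beta>1$ close to $1$, use the trivial Lipschitz bound $|p_N'|\le 2\pi N$ (Bernstein/mean value) to pass from $\sup_{\theta\in E_c}$ to $\max_{\theta\in\Delta_N}$, replace $\ell^\infty(\Delta_N)$ by $\ell^2(\Delta_N)$, and plug in $|\Delta_N|\le N^{\beta\cdot\overline{\dim}_{\mathrm{box}}(E_c)}$. This yields
$\bigl\|\sup_{\theta\in E_c}|A_N^\theta f|\bigr\|_{L^2}=O(N^{-\nu})$
for some $\nu>0$ precisely when $\overline{\dim}_{\mathrm{box}}(E_c)<1/16$, and one concludes by Borel--Cantelli along a lacunary sequence (first for simple $f$, then for general $f\in L^2$ via Bourgain's maximal inequality for the squares).

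Your oscillation-difference route would presumably also work, but it is the machinery of the paper's \S4, which is invoked there precisely because for \emph{general} $\theta$ one does not have $\|A_N^\theta f\|_{L^2}\to 0$. Here the badly-approximable hypothesis hands you that decay for free via the spectral theorem, and the argument collapses to something much shorter. Note also that your scheme, as written, only shows the sequence is Cauchy uniformly in $\theta$; identifying the limit as $0$ still requires the single-$\theta$ decay, which again comes most cleanly from the spectral-theorem reduction.
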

We recall that a number, $\theta$, is said to be \emph{$c$-badly approximable} if for all $\frac{p}{q}$ reduced fractions, we have the lower bound
\[ \left| \theta - \frac{p}{q} \right| \geq \frac{c}{q^2}.\]
(Note that by Dirichlet's principle we automatically have $0< c< 1$.)

A natural follow-up question concerns the behavior of the twisted square means in the case where $\theta$ is \emph{not} badly approximable. We provide a new (see the remark below), elementary proof of the following
\begin{theorem}\label{conv}
Let $P$ be as above, and $\theta \in [0,1]$ be arbitrary. For any $f \in L^p(X)$, $p >1$, there exists a subset $X' \subset X$ of full measure so that the averages
\[ \left\{ \frac{1}{N} \sum_{n \leq N} e(n\theta) T^{P(n)} f(x) \right\} \]
converge pointwise on $X'$.
\end{theorem}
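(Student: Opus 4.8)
The plan is to use the spectral theorem for the unitary $T$ on $L^2(X)$ to reduce the whole problem to the scalar Weyl sums
\[
K_N(\beta) \;:=\; \aveN e\big(n\theta + P(n)\beta\big), \qquad \beta \in \TT,
\]
together with the pointwise polynomial ergodic theorem and maximal inequality of Bourgain cited above. Writing $T^k = \int_{\TT} e(k\beta)\,dE_\beta$ for the spectral resolution $E$ of $T$, one has $\aveN e(n\theta)T^{P(n)}f = \int_{\TT} K_N(\beta)\,dE_\beta f$, so the entire dependence on $n$ is carried by $K_N$. Since $\sup_N\big|\aveN e(n\theta)T^{P(n)}f\big| \le \sup_N \aveN T^{P(n)}|f|$ and the right-hand side is bounded on $L^p$ for $p>1$ by Bourgain's maximal inequality, the operator $f \mapsto \sup_N|\aveN e(n\theta)T^{P(n)}f|$ is bounded on $L^p$; hence, by the standard (Banach-principle) density argument, it suffices to prove $\mu$-a.e.\ convergence for $f$ in a dense subclass of $L^p \cap L^2$ (indeed of $L^2$).

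Two reductions dispose of the easy cases. If $\theta = a/q$ is rational, then $e(n\theta)$ is periodic in $n$, and grouping the sum by the residue of $n$ modulo $q$ expresses $\aveN e(n\theta)T^{P(n)}f$ as a fixed linear combination of the polynomial ergodic averages $\frac1N\sum_{qm+r\le N} T^{P_r(m)}f$, where $P_r(m):=P(qm+r)$ is again an integer polynomial; each of these converges $\mu$-a.e.\ by Bourgain's theorem, and we are done. If $\deg P \le 1$, say $P(n)=an+b$, then $\aveN e(n\theta)T^{P(n)}f = T^b\big(\aveN e(n\theta)(T^a)^n f\big)$, which we handle by passing to $Y := X\times\TT$ with the measure-preserving map $S(x,t):=(T^a x,\,t-\theta)$ and $F(x,t):=f(x)\overline{e(t)}\in L^1(Y)$: then $\aveN (F\circ S^n)(x,t) = \overline{e(t)}\cdot\aveN e(n\theta)(T^a)^n f(x)$, so Birkhoff's theorem on $(Y,S)$ and Fubini give $\mu$-a.e.\ convergence for every $\theta$. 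We may therefore assume $\theta$ irrational and $\deg P \ge 2$.

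In this case, for every $\beta$ the polynomial $n\mapsto n\theta+P(n)\beta$ has an irrational coefficient among those of $n,n^2,\dots$: its leading coefficient is $p_{\deg P}\beta$ unless $\beta\in\QQ$, and if $\beta\in\QQ$ its linear coefficient $\theta+p_1\beta$ is irrational (here $P(t)=\sum_j p_j t^j$). Hence $K_N(\beta)\to 0$ for \emph{every} $\beta$ by Weyl's equidistribution theorem, and since $|K_N|\le 1$, dominated convergence for the spectral integral gives $\|\aveN e(n\theta)T^{P(n)}f\|_2\to 0$ for every $f\in L^2$. It remains to upgrade this $L^2$-convergence to $\mu$-a.e.\ convergence (necessarily to $0$) on a dense subclass of $L^2$. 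For $f$ an eigenfunction of $T$ with eigenvalue $e(\lambda)$ the averages are $K_N(\lambda)f$ and converge everywhere, which covers the Kronecker factor. For $f$ whose spectral measure is absolutely continuous with bounded density $w$, the exact orthogonality relation
\[
\int_{\TT}|K_N(\beta)|^2\,d\beta \;=\; \frac1{N^2}\,\#\{(n,m)\in[1,N]^2 : P(n)=P(m)\} \;=\; O(N^{-1}),
\]
valid because a non-constant integer polynomial takes each value at most $\deg P$ times, gives $\|\aveN e(n\theta)T^{P(n)}f\|_2^2 \le \|w\|_\infty\,O(N^{-1})$; this yields $\mu$-a.e.\ convergence along $N_k=k^2$, and the maximal inequality (with Bourgain's theorem applied to $|f|$ to control the within-block oscillation, using $N_{k+1}/N_k\to1$) fills the remaining gaps. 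Since these $f$ are dense in the absolutely continuous part of the spectrum, only the singular continuous part remains, for which one argues analogously, bounding $\int_{\TT}|K_N(\beta)|^2\,d\sigma_f(\beta)$ by a van der Corput (Weyl differencing) estimate and using that $\sigma_f$ has no atoms, so that the resonant frequencies carry no mass.

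The main obstacle is exactly this last point — producing a dense subclass of the orthogonal complement of the Kronecker factor on which one has genuine $\mu$-a.e.\ convergence. On the absolutely continuous part the clean $O(N^{-1})$ bound does the job immediately; the work lies in extracting, from the van der Corput differencing of $K_N(\beta)$ combined only with the non-atomicity of $\sigma_f$, a rate of decay of $\int|K_N(\beta)|^2\,d\sigma_f(\beta)$ strong enough to run the subsequence-plus-maximal-inequality scheme for a general continuous spectral measure. Everything else — the two reductions, the eigenfunction computation, and the passage from a dense class to all of $L^p$ — is soft.
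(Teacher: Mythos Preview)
Your proposal has a genuine gap: the singular continuous spectrum is not handled, and the method you sketch cannot close it.

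For the absolutely continuous part your argument is fine: the identity $\int_{\TT}|K_N(\beta)|^2\,d\beta=O(N^{-1})$ gives a rate, and the subsequence-plus-gap-filling scheme (using Bourgain's pointwise theorem applied to $|f|$ together with $N_{k+1}/N_k\to 1$) goes through. But for $f$ with singular continuous spectral measure $\sigma_f$, all that Weyl differencing and non-atomicity give you is $K_N(\beta)\to 0$ for every $\beta$, and hence $\int|K_N|^2\,d\sigma_f\to 0$ by dominated convergence. This is $L^2$-convergence with \emph{no rate}, and without a rate you cannot run Borel--Cantelli along any subsequence. Van der Corput bounds $|K_N(\beta)|$ in terms of the Diophantine type of the coefficients $p_d\beta,\dots,p_2\beta,\theta+p_1\beta$; a singular continuous measure can concentrate on frequencies of arbitrarily bad type, so no summable or even lacunarily summable rate follows. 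Nor does density help: functions with absolutely continuous spectral measure are dense only in the absolutely continuous subspace, not in all of $L^2$. You have correctly located the obstacle in your last paragraph but not overcome it; what you call ``the work'' is in fact the entire content of the theorem.

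The paper's proof avoids spectral decomposition altogether. It transfers (via Calder\'on) to $\ell^2(\Z)$ and proves an $r$-variational inequality $\|\V^r(K_N^\theta*f:N\in I_\rho)\|_{\ell^2(\Z)}\lesssim\|f\|_{\ell^2(\Z)}$ for $r>2$, which directly yields a.e.\ convergence for every $L^2$ function. The multipliers $\widehat{K_N}(\alpha)=\frac1N\sum_n e(P(n)\alpha-n\theta)$ are analyzed by the circle method: the arithmetic of $\theta$ enters through what the paper calls $N$-$\theta$ rational approximates, which pin down where major arcs can occur; on major arcs the multiplier is replaced by Gauss-sum-weighted pseudo-projections, and the variational bound is assembled from square-function estimates and Bourgain's projection lemmas. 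The point is that the variational inequality is a single quantitative statement on all of $L^2$, so the singular continuous part never has to be isolated.
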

\begin{remark}
By Bourgain's $L^p$-maximal inequality for polynomial ergodic averages \cite[\S 7]{B1}, the set of functions for which convergence holds is closed in $L^p$ for each $p>1$. Since Bourgain's \cite[Theorem 6]{B0} establishes the theorem for $L^2$-functions, this result is already known (this argument was also recently recovered as a corollary of Mirek and Trojan \cite{MT}).
 However, Bourgain's (and Mirek and Trojan's) argument is based off a study of $\Z^2$-actions, and transference to $\RR^2$; the novelty of our approach is that we conduct our analysis entirely in the one-dimensional setting.
\end{remark}

Note that related results concerning Wiener-Wintner type convergence for linear and polynomial weights for the double ergodic theorem were announced in Assani, Duncan, Moore \cite{ADM} and Assani, Moore \cite{AM}. Moreover, Theorem \ref{conv} gives a partial answer to the general question of the polynomial return time convergence, i.e., whether for every (ergodic) invertible system $(X,\mu, T)$, $f\in L^\infty(X,\mu)$ and a polynomial $P$ with integer coefficients, the sequence $\{f(T^{p(n)}x)\}$ is for a.e.~$x$ a good weight for the pointwise ergodic theorem, i.e., whether for every other system $(Y,\nu,S)$ and $g\in L^\infty(Y,\nu)$, the averages 
\[
\aveN f(T^{p(n)}x) g(S^ny)
\]
converge for a.e. $y$, see Assani, Presser \cite[Question 7.1]{AP}.  

A random ergodic theorem with Hardy fields  weights is presented in the recent preprint by Krause, Zorin-Kranich \cite{KZ}.

\smallskip

The paper is organized as follows:\\
In $\S 2$ we recall relevant definitions of Hardy field functions and develop the machinery to prove Theorem \ref{hardy}, as well as provide a uniform estimate for Hardy field weights using the Gowers-Host-Kra uniformity norms of the function $f$, see Theorem \ref{thm:Hardy-seminorms};\\
In $\S 3$ we prove a quantitative estimate on Weyl sums, which we then combine with a metric-entropy argument to prove Theorem \ref{badapp}; and\\
In $\S 4$ we prove Theorem \ref{conv}.

\subsection{Acknowledgements}
This project began during the first author's research visit to the University of California, Los Angeles. Both authors are deeply grateful to Terence Tao for his great encouragement, input and support. 
They also thank Michael Boshernitzan, Kevin Hughes, and Pavel Zorin-Kranich for helpful conversations and comments as well as the referee for improvement suggestions. The support of the Hausdorff Research Institute and the Max Planck Institute for Mathematics in Bonn is also gratefully acknowledged.

\subsection{Notation}
We let $e(t):= e^{2\pi i t}$ denote the exponential.

We will make use of the modified Vinogradov notation. We use $X \lesssim Y$, or $Y \gtrsim X$ to denote the estimate $X \leq CY$ for an absolute constant $C$. If we need $C$ to depend on a parameter,
we shall indicate this by subscripts, thus for instance $X \lesssim_p Y$ denotes
the estimate $X \leq C_p Y$ for some $C_p$ depending on $p$. We use $X \approx Y$ as
shorthand for $Y \lesssim X \lesssim Y $.

We also make use of big-O notation: we let $O(Y)$ denote a quantity that is $\lesssim Y$, and similarly $O_p(Y)$ a quantity that is $\lesssim_p Y$. Finally, we let $\OO(Y)$ denote a quantity which is $\leq Y$.

%%%%%%%%%%%%%%%%%%%%%%%%%%%%%%%%%%
%                                %
%      WW and Hardy fields       %
%                                %
%%%%%%%%%%%%%%%%%%%%%%%%%%%%%%%%%%

\section{Wiener-Winter Convergence and Hardy Fields}
In this section we prove our (uniform) Wiener-Wintner type Theorem \ref{hardy} for weights which come from a Hardy field sequence as well as a uniform estimate of the averages (\ref{eq:hardy}) given in Theorem \ref{thm:Hardy-seminorms} below. 
If not explicitly stated otherwise, we will not need the assumption of invertibility. 
We begin with a few preliminary definitions and tools.

\begin{definition}
We call a set $A$ of uniformly bounded complex sequences a \emph{set of (uniform) Wiener-Wintner weights} if for every ergodic measure-preserving system $(X,\mu,T)$ and every $f\in L^1(X,\mu)$ there is $X'\subset X$ with $\mu(X')=1$ such that the averages
\begin{equation}\label{eq:weighted-ave}
\aveN a_n f(T^nx)
\end{equation}
converge (uniformly) for every $x\in X'$ and every
$\{a_n \} \in A$.
\end{definition}
Recall further that for an ergodic measure-preserving system $(X,\mu,T)$, a point $x\in X$ is called \emph{generic} for a function $f\in L^1(X,\mu)$ if it satisfies the assertion of  Birkhoff's ergodic theorem, i.e.,
\[
\lim_{N\to\infty} \aveN T^n f(x) \to \int_X f\, d\mu.
\]

In order to be able to restrict ourselves to a dense subclass of functions we will need the following approximation lemma.

\begin{lemma}[A Wiener-Wintner type Banach Principle]\label{lemma:banach-ww}
Let $A$ be a set of uniformly bounded sequences $\{ a_n \} \subset \C$. Then for every ergodic measure-preserving system $(X,\mu,T)$, the set of functions $f\in L^1(X,\mu)$ so that for almost every $x\in X$, the averages (\ref{eq:weighted-ave}) converge
for every $ \{ a_n \}  \in A$ is closed in $L^1$-norm. The same holds for uniform convergence in $\{ a_n \} $. 
 Moreover, for $f=\lim_{j\to\infty} f_j$ in $L^1(X,\mu)$, the limits of the averages (\ref{eq:weighted-ave}) for $f_j$ converge to the limit of the averages (\ref{eq:weighted-ave}) for $f$.
\end{lemma}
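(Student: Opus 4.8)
The plan is to run the classical Banach principle argument, with the ergodic maximal inequality as the only analytic input. Fix $C$ with $\sup_{\{a_n\}\in A}\sup_n|a_n|\le C$, and for $g\in L^1(X,\mu)$, $a=\{a_n\}\in A$ and $N\ge 1$ write $A_N^a g(x):=\frac1N\sum_{n\le N}a_n g(T^nx)$. Since $|A_N^a g(x)|\le C\cdot\frac1N\sum_{n\le N}|g|(T^nx)\le C\,Mg(x)$, where $Mg(x):=\sup_N\frac1N\sum_{n\le N}|g|(T^nx)$ is the Birkhoff maximal function, the maximal ergodic theorem (Wiener's maximal inequality) gives $\mu(\{Mg>\lambda\})\le\frac{C_0}{\lambda}\|g\|_1$ for an absolute constant $C_0$. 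Hence the maximal operator $g\mapsto\sup_{a\in A}\sup_N|A_N^a g|$ is of weak type $(1,1)$ with constant $CC_0$, \emph{uniformly} over the (otherwise arbitrary) index set $A$; note we use $T$ measure-preserving but not invertible here.

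Next I would introduce the relevant oscillation functional. For the uniform statement set
\[
\mathcal{O}g(x)\;:=\;\limsup_{N\to\infty}\ \sup_{M,M'\ge N}\ \sup_{a\in A}\bigl|A_M^a g(x)-A_{M'}^a g(x)\bigr|,
\]
and for the non-uniform statement replace this by $\sup_{a\in A}\bigl(\limsup_{M,M'\to\infty}|A_M^a g(x)-A_{M'}^a g(x)|\bigr)$. In both cases $\mathcal{O}$ is nonnegative and subadditive, $\mathcal{O}(g+h)\le\mathcal{O}g+\mathcal{O}h$, and is pointwise dominated by a scalar majorant, $\mathcal{O}g(x)\le 2\sup_{a}\sup_N|A_N^a g(x)|\le 2C\,Mg(x)$. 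By construction, $f$ lies in the good set of the lemma (respectively in its uniform version) exactly when $\mathcal{O}f=0$ $\mu$-a.e.

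The closedness is then immediate: if $f=\lim_j f_j$ in $L^1$ with $\mathcal{O}f_j=0$ a.e.\ for all $j$, then subadditivity gives $\mathcal{O}f\le\mathcal{O}f_j+\mathcal{O}(f-f_j)=\mathcal{O}(f-f_j)\le 2C\,M(f-f_j)$ a.e., so for every $\lambda>0$
\[
\mu(\{\mathcal{O}f>\lambda\})\le\mu(\{2C\,M(f-f_j)>\lambda\})\le\frac{2CC_0}{\lambda}\,\|f-f_j\|_1\longrightarrow 0
\]
as $j\to\infty$; thus $\mu(\{\mathcal{O}f>\lambda\})=0$ for every $\lambda>0$ and $\mathcal{O}f=0$ a.e., i.e.\ $f$ is in the good set. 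For the last assertion, from $|A_N^a f(x)-A_N^a f_j(x)|=|A_N^a(f-f_j)(x)|\le C\,M(f-f_j)(x)$, uniformly in $N$ and in $a\in A$, letting $N\to\infty$ yields $\sup_{a\in A}|L_f(x,a)-L_{f_j}(x,a)|\le C\,M(f-f_j)(x)$, where $L_g(x,a):=\lim_N A_N^a g(x)$; the right-hand side tends to $0$ in measure by the maximal inequality (and $\mu$-a.e.\ along a suitable subsequence, by Borel--Cantelli), so the limits of the averages for the $f_j$ converge to those for $f$, uniformly in $a\in A$.

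The argument is entirely soft, and the only point requiring care is the bookkeeping around the oscillation functional: one must choose $\mathcal{O}$ so that (a) its vanishing at $x$ is \emph{equivalent} to the desired mode of convergence of $A_N^a f(x)$, and (b) it is genuinely subadditive while still being pointwise controlled by the scalar maximal function $2C\,Mg$. Since $A$ need not be countable, it is worth observing that the inner supremum $\sup_{a\in A}$ causes no trouble: the chain of pointwise inequalities above is all that is used, so one never needs $\mathcal{O}g$ itself to be measurable, only its majorant $2C\,Mg$.
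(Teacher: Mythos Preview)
Your argument is correct and follows the standard Banach principle template: control the oscillation by the maximal function and apply the weak-type $(1,1)$ maximal ergodic inequality. The paper takes a slightly different route. Instead of invoking the maximal inequality, it uses Birkhoff's theorem directly: given $f_j\to f$ in $L^1$, it restricts to the full-measure set of $x$ that are generic for each $|f-f_j|$, so that $\limsup_N \frac1N\sum_{n\le N}T^n|f-f_j|(x)=\|f-f_j\|_{L^1}$ pointwise on that set (here ergodicity is used). This yields the clean pointwise bound $\limsup_{N,M}\sup_{a\in A}|A_N^a f(x)-A_M^a f(x)|\le 2C\|f-f_j\|_{L^1}$, and letting $j\to\infty$ finishes both the closedness and the ``moreover'' clause simultaneously, with a.e.\ (not just in-measure) convergence of the limits. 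Your approach has the advantage of not using ergodicity at all and of making the role of the maximal operator explicit; the paper's approach avoids the maximal inequality entirely and gives a sharper pointwise statement for the final assertion. Your handling of the possible non-measurability of $\mathcal{O}f$ via outer measure is fine.
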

\begin{proof}
Denote
\[ C:=\sup\{\|\{ a_n \} \|_\infty, \  \{ a_n \}  \in A\}\]
and take $(X,\mu,T)$ an ergodic measure-preserving system and $f\in L^1(X,\mu)$.
Observe first that
\[
\left|\aveN a_n T^n f\right| \leq C  \aveN T^n|f|
\]
holds for every $f\in L^1(x,\mu)$.
Assume now that there is a sequence of functions
\[ \{ f_j \}\subset L^1(X,\mu)\]
 with
 \[ \lim_{j\to\infty}\|f_j-f\|_{L^1(X)}=0\]
  so that for every $f_j$ there is a set $X_j'\subset X$ with full measure  such that for every $x\in X_j'$, the averages
\[
\aveN a_nT^n f_j
\]
converge for every $x\in X_j'$ (uniformly) for every $\{ a_n \} \in A$. Define
\[ X' := \bigcap_j X_j' \cap \{ x : x \text{ is generic for each } |f - f_j| \}; \]
we then have $\mu(X')=1$. Now, for every $x\in X'$ and $j\in\N$ we use the triangle inequality and the genericity assumption to majorize
\begin{eqnarray*}
&\limsup_{N,M\to\infty}
\left\{ \sup_{\{a_n \} \in A}\left| \aveN a_n T^nf(x) - \aveM a_n T^nf(x) \right| \right\} \\
&\qquad \leq
\limsup_{N,M\to\infty} \left\{ \sup_{ \{a_n \}\in A} \left| \aveN a_nT^nf_j(x) -\aveM a_nT^nf_j(x)  \right| \right\} \\
&\qquad \qquad  + 2C \cdot \limsup_{N \to \infty} \aveN T^n|f-f_j| (x) \\
& \qquad \qquad \qquad = 2C \|f-f_j\|_{L^1(X)}.
\end{eqnarray*}
Letting $j\to\infty$ finishes the argument. The assertion about the limit follows analogously, see e.g.~\cite[Lemma 21.7]{EFHN}.
\end{proof}

\begin{remark}\label{remark:banach-ww-uniform}
By an inspection of the above argument and the fact that Birkhoff's averages converge uniformly in $x$ for uniquely ergodic systems and continuous functions, one has the following variation of Lemma \ref{lemma:banach-ww}. Let $A$ be a set of uniformly bouded sequences $\{a_n\}\subset \C$ and let $(X,\mu,T)$ be uniquely ergodic. Then the set of all continuous functions for which the averages (\ref{eq:weighted-ave}) converge uniformly in $x\in X$
and $\{a_n \}\in A$ is closed in the $L^1$-norm.
\end{remark}

We will also need the following classical inequality, see e.g.~Montgomery \cite{montgomery}. % \textbf{(Better reference?)}
\begin{lemma}[Van der Corput's inequality]\label{lemma:vdc}
Let $N\in \N$ and $u_1,\ldots,u_N\subset \C$ be with $|u_n|\leq 1$ for every $n=1,\ldots,N$. Then for every $H\in\{1,\ldots,N\}$ the following inequality holds.
$$
\left|\aveN u_n\right|^2\leq \frac{2(N+H)}{N^2(H+1)}\sum_{h=1}^H \left(1-\frac{h}{H+1}\right)\left|\sum_{n=1}^{N-h}u_{n+h}\ol{u_n}\right| + \frac{N+H}{N(H+1)}.
$$
\end{lemma}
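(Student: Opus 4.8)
The plan is to prove this by the classical ``completion and Cauchy--Schwarz'' argument. First I would extend the sequence by zero, setting $u_n := 0$ for $n \le 0$ and for $n > N$, so that every sum below may be taken over all of $\Z$ without changing its value. The starting point is the averaging identity
\[
(H+1)\sum_{n=1}^N u_n \;=\; \sum_{n=1-H}^{N}\;\sum_{h=0}^{H} u_{n+h},
\]
which holds because each $u_m$ with $1 \le m \le N$ is counted exactly once for each $h \in \{0,\dots,H\}$ (namely at $n = m-h$, and this index always lies in the range $[1-H,N]$ once $1 \le m \le N$). The outer sum on the right-hand side has $N+H$ terms, so the Cauchy--Schwarz inequality gives
\[
(H+1)^2\left|\sum_{n=1}^N u_n\right|^2 \;\le\; (N+H)\sum_{n=1-H}^{N}\left|\sum_{h=0}^{H} u_{n+h}\right|^2 .
\]

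Next I would expand the square and reorganize the resulting double sum by the difference of the two shift indices. Writing $\bigl|\sum_{h=0}^H u_{n+h}\bigr|^2 = \sum_{h,h'=0}^H u_{n+h}\ol{u_{n+h'}}$, summing over $n \in \Z$, and noting that $\sum_n u_{n+h}\ol{u_{n+h'}}$ depends only on $k := h-h'$, one checks that the pair $(h,h')$ with $h-h'=k$ occurs with multiplicity $H+1-|k|$ for $|k|\le H$. Isolating the diagonal term $k=0$ and pairing each $k>0$ with $-k$ via $\sum_m u_{m-k}\ol{u_m} = \ol{\sum_m u_{m+k}\ol{u_m}}$, and then recording each correlation over its natural support, this yields
\[
\sum_{n=1-H}^{N}\left|\sum_{h=0}^{H} u_{n+h}\right|^2 = (H+1)\sum_{m}|u_m|^2 + 2\sum_{h=1}^{H}(H+1-h)\,\Rea\sum_{n=1}^{N-h}u_{n+h}\ol{u_n}.
\]
Since $|u_m|\le 1$ and only $N$ of the $u_m$ are nonzero, the diagonal term is at most $(H+1)N$; bounding $\Rea z \le |z|$ in each remaining summand gives
\[
\sum_{n=1-H}^{N}\left|\sum_{h=0}^{H} u_{n+h}\right|^2 \le (H+1)N + 2\sum_{h=1}^{H}(H+1-h)\left|\sum_{n=1}^{N-h}u_{n+h}\ol{u_n}\right|.
\]

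Finally I would combine the two displayed inequalities, divide through by $(H+1)^2 N^2$, and use the identity $(H+1-h)/(H+1) = 1 - h/(H+1)$ to recover exactly the asserted bound. I do not expect any genuine obstacle here: the whole argument is a chain of elementary manipulations. The only point that needs care is the bookkeeping in the middle step---verifying the multiplicity $H+1-|k|$ attached to each shift-difference $k$, and tracking the ranges of summation correctly once the sequence has been padded with zeros so that all sums can be taken over $\Z$.
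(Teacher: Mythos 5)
Your proof is correct, and the bookkeeping checks out exactly: the averaging identity counts each $u_m$ exactly $H+1$ times, Cauchy--Schwarz over the $N+H$ values of $n$ gives the factor $N+H$, the multiplicity $H+1-|k|$ for each shift-difference is right, and after dividing by $(H+1)^2N^2$ the constants reproduce the stated bound verbatim, including the $\bigl(1-\tfrac{h}{H+1}\bigr)$ weights and the error term $\tfrac{N+H}{N(H+1)}$. Note that the paper does not prove this lemma at all --- it cites it as classical (Montgomery) --- and the completion-plus-Cauchy--Schwarz argument you give is precisely the standard proof of that classical inequality, so there is nothing to reconcile between your route and the paper's.
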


\smallskip\smallskip

% Hardy fields

We now introduce Hardy fields and some of their properties. We refer the reader to Boshernitzan \cite{bosh94}, Boshernitzan, Wierdl \cite{bosh/wierdl}, Boshernitzan, Kolesnik, Quas, Wierdl \cite{bosh/kolesnik/quas/wierdl}, Frantzikinakis, Wierdl \cite{frantz/wierdl} and Frantzikinakis \cite{frantz09,frantz10} for further discussion of Hardy field functions and their applications to ergodic theory.
%We use the notation from \cite{bosh/kolesnik/quas/wierdl} and \cite{bosh94}.

We call two real valued functions of one real variable that are continuous for large values of $s \in \RR$ \emph{equivalent} if they coincide for large $s\in\RR$. Here and later, we say that a property holds for large $s$ (or eventually) if it holds for every $s$ in an interval of the form $[s_0,\infty)$. The equivalence classes under this relation are called \emph{germs}. The set of all germs we denote by $B$ which is a ring.

\begin{definition}
A \emph{Hardy field} is a subfield of $B$ which is closed under differentiation. A Hardy field is called \emph{maximal} if it is maximal among Hardy fields with respect to inclusion of sets. The union of all Hardy fields is denoted by $\U$.
\end{definition}
One can show that every maximal Hardy field contains the class $\mathcal{L}$ of logarithmico-exponential functions of Hardy, i.e., the class of functions which can be obtained by finitely many combinations of real constants, the variable $s$, $\log$, $\exp$, summation and multiplication. Thus, for example, it contains functions of the form $s^\alpha$, $\alpha\in\RR$.

Another property of Hardy fields is that each Hardy field is totally ordered with respect to the order $<_{\infty}$ defined by
$$
f  <_{\infty} g \quad \Longleftrightarrow \quad f(s)< g(s) \quad \text{for all large } s.
$$
Since the class $\mathcal{L}$ belongs to every maximal Hardy field, we conclude that every element of $\U$ is comparable to every logarithmico-exponential  function. In particular, we can define the \emph{type} of a function $p\in U$ to be
$$
t(p):=\inf\{\alpha\in \RR:\, |p(s)|< s^\alpha \text{ for large }s\}.
$$
We say that $p$ is \emph{subpolynomial} if $t(p)<+\infty$, i.e., if $|p|$ is dominated by some polynomial. In particular, for eventually positive subpolynomial $p$ with finite type there is $\alpha\in \RR$ such that for every $\varepsilon$ there is an $s_0$ so that \[ s^{\alpha-\veps}<p(s)<s^{\alpha+\veps}\]
holds for every $s>s_0$. Note that considering eventually positive $p$ is not  a restriction since every nonzero $p\in\U$ is either eventually positive or eventually negative.

We now consider subpolynomial elements of $\U$ with positive non-integer type, such as for example $p(s)=5s^\pi+s\log s.$ More precisely, we introduce following classes.

\begin{definition}
For $\delta\in(0,1/2)$, $M\geq 1$ and $m\in\N_0$ denote by $\M_{\delta, M, m}$ the set of all $p\in \U$ so that there exist $\alpha \in [\delta, 1-\delta]$, $k\leq m$ and $\veps<\min\{(\alpha-\delta)/3, 1-\alpha-\delta\}$ with
\begin{equation}\label{M_delta,M,m}
\frac{1}{M}s^{k+\alpha-\veps-j}\leq p^{(j)}(s)\leq M s^{k+\alpha+\veps-j} \text{ for all }s\geq 1 \text{ and } j=0,\ldots,k+1.
\end{equation}
\end{definition}

\begin{remark}\label{rem:class-M}
By \cite[Lemma 4.2]{bosh/kolesnik/quas/wierdl}, if $a(s),b(s)\in \U$ are non-polynomial with
\[ \lim_{x\to\infty}a(s)/b(s)=0,\]
then $a'(s)$, and $b'(s)$ are non-polynomial and $\lim_{s \to \infty} \frac{a'(s)}{b'(s)} = 0$ too.
Thus, by repeating the argument, every subpolynomial $p\in\U$ of positive non-integer type belongs to some class $\M_{\delta, M, m}$ after a possible left translation.
\end{remark}

We begin our study of the classes $\M_{\delta, M, m}$ in the case where $m=0$;
this special case will anchor the inductive proof of our Theorem \ref{hardy}.
\begin{lemma}\label{lemma:numbers-unif-zero-hardy}
Let $\delta\in(0,1/2)$ and $M\geq 1$.
Then
\[
\lim_{N\to\infty} \sup_{p\in \M_{\delta, M, 0}}\left| \aveN e(p(n))\right|=0.
\]
\end{lemma}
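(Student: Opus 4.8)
The plan is to deduce the lemma from a \emph{quantitative} Weyl-sum estimate: it suffices to produce $\beta=\beta(\delta)>0$ with
\[ \sup_{p\in\M_{\delta,M,0}}\left|\frac1N\sum_{n\le N}e(p(n))\right|\lesssim_{M,\delta}N^{-\beta}, \]
and we will see $\beta=\delta$ works — the per-function saving is $N^{-(\alpha-3\veps)}$, and the apparently arbitrary constraint $\veps<(\alpha-\delta)/3$ in the definition of $\M_{\delta,M,0}$ is exactly what forces $\alpha-3\veps>\delta$. First I would split $1\le n\le N$ into the $O(\log N)$ dyadic blocks $I=(L,2L]$. On such a block the defining inequality with $j=1$ gives $\tfrac1M s^{\alpha-\veps-1}\le p'(s)\le Ms^{\alpha+\veps-1}$, so $p'$ is positive and of size $L^{\alpha-1}$ up to the harmless factor $L^{\pm\veps}$; since $\alpha+\veps-1<-\delta$ (this is where $\veps<1-\alpha-\delta$ enters) we get $p'(s)<\tfrac12$ once $L\ge L_0(M,\delta)$, and the finitely many blocks with $L<L_0$ contribute only $O_{M,\delta}(1)$ altogether, so they may be discarded.

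The heart of the matter is the single-block bound $\bigl|\sum_{n\in I}e(p(n))\bigr|\lesssim_{M,\delta}L^{1-\alpha+3\veps}$. To obtain it I would apply van der Corput's inequality (Lemma \ref{lemma:vdc}) to $u_n=e(p(L+n))$ with a parameter $H$ equal to a suitable power of $L$ (of order $L^{1-\alpha}$, chosen so that $p(n+h)-p(n)=\int_n^{n+h}p'$ stays in a bounded range as $h$ runs up to $H$). This reduces matters to the inner sums $S_h=\sum_n e\bigl(p(n+h)-p(n)\bigr)$, whose phase has $n$-derivative $p'(n+h)-p'(n)$; this is controlled by the defining inequalities for $j=0,1$ together with the standard Hardy-field fact (cf.\ Remark \ref{rem:class-M} and \cite{bosh/kolesnik/quas/wierdl}) that $p''$, being comparable to the appropriate power of $s$, obeys $|p''(s)|\lesssim_{M,\delta}s^{\alpha-2+O(\veps)}$ (at least eventually). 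A first-derivative estimate — comparing $S_h$ to the associated oscillatory integral and integrating by parts — then bounds $\frac1{|I|H}\sum_{h\le H}|S_h|+\frac1H$ by a power of $L$ with negative exponent, and taking square roots yields $\bigl|\sum_{n\in I}e(p(n))\bigr|\lesssim_{M,\delta}L^{1-\alpha+3\veps}$, the three $\veps$-losses coming from one power of $p''$ and two powers of $1/p'$.

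Finally I would sum over blocks: since $1-\alpha+3\veps>0$, the geometric series $\sum_{L=2^j\le 2N}L^{1-\alpha+3\veps}$ is $\lesssim N^{1-\alpha+3\veps}$, so, together with the $O_{M,\delta}(1)$ from the small blocks, $\bigl|\frac1N\sum_{n\le N}e(p(n))\bigr|\lesssim_{M,\delta}N^{-(\alpha-3\veps)}+N^{-1}\le N^{-\delta}+N^{-1}$ for every $p\in\M_{\delta,M,0}$, which is the claim. I expect the genuine obstacle to be the single-block estimate carried out \emph{with constants depending only on $M$ and $\delta$, not on $p$}: first-derivative exponential-sum bounds require regularity of $p'$ (ultimate monotonicity, or the displayed bound on $p''$), and although every $p\in\U$ enjoys these properties, the threshold past which they take hold is a priori $p$-dependent. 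Extracting a threshold uniform over the entire class — exploiting that the defining inequalities hold for \emph{all} $s\ge1$, not merely eventually, together with the comparison lemmas for Hardy fields — is the technical core, and is also the point at which the precise shape of the definition of $\M_{\delta,M,0}$ is used.
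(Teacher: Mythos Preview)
Your $\veps$-accounting (``one power of $p''$ and two powers of $1/p'$'') is precisely the bookkeeping of the \emph{direct} first-derivative test, and that is what the paper actually does: apply Euler's summation formula to $\sum_{n=1}^N e(p(n))$ over $[1,N]$ in one go (no dyadic splitting, no differencing), then integrate $\int_1^N e(p(t))\,dt$ by parts once, collecting boundary terms $\lesssim 1/p'(N)+1/p'(1)\lesssim M N^{1-\alpha+\veps}$ and an interior term $\int_1^N |p''|/(p')^2\lesssim M^3\int_1^N t^{-\alpha+3\veps}\,dt$. That is the entire argument. (Your concern about $p''$ is well-placed: the definition of $\M_{\delta,M,0}$ literally controls only $p^{(j)}$ for $j\le k+1=1$, yet the paper's own bound on the term $II$ also invokes $|p''(t)|\le Mt^{\alpha+\veps-2}$; one should evidently read the $j=2$ bound as part of the class.)

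The genuine gap in your outline is the van der Corput differencing step: with your choice $H\sim L^{1-\alpha}$ it yields \emph{nothing}. Differencing replaces the phase $p$ by $\phi_h(n)=p(n+h)-p(n)$, whose derivative $\phi_h'(n)\approx hp''(n)\sim hL^{\alpha-2}$ on the block $(L,2L]$. For $h\le H\sim L^{1-\alpha}$ this is $\lesssim L^{-1}$, so $\phi_h$ varies by only $O(1)$ across the whole block, $|S_h|$ is essentially $|I|=L$, and the first-derivative bound $|S_h|\lesssim 1/\min|\phi_h'|\sim L^{2-\alpha}/h\ge L$ never beats the trivial estimate. Substituting $|S_h|\le L$ into Lemma~\ref{lemma:vdc} gives only $\bigl|\tfrac{1}{|I|}\sum_{n\in I} e(p(n))\bigr|^2\lesssim 1/H+1=O(1)$. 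Weyl differencing is for \emph{reducing} a large phase derivative; here $p'\sim L^{\alpha-1}$ is already small, and differencing pushes it in the wrong direction. To obtain your claimed block bound $L^{1-\alpha+3\veps}$ you must skip Lemma~\ref{lemma:vdc} and run Euler summation plus integration by parts directly on $\sum_{n\in I} e(p(n))$ --- at which point the dyadic decomposition is also superfluous and you have recovered the paper's proof.
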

\begin{proof}
Analogously to Kuipers, Niederreiter \cite[Example 2.4]{kuipers/niederreiter} we use Euler's summation formula
\begin{equation}\label{eq:Euler}
\sum_{n=1}^N F(n) = \int_1^N F(t)\, dt + \frac{F(1)+F(N)}{2} + \int_1^N \left(\{t\}-\frac{1}{2}\right) F'(t) \, dt
\end{equation}
for the function $F(t):= e(p(t))$ for $p\in  \M_{\delta, M, 0}$.
We have
\[
\frac{1}{N}\int_1^N |F'(t)|\, dt  \leq  \frac{2\pi M }{N}\int_1^N t^{\delta+\veps -1}\, dt
\]
converging to $0$ uniformly in $p$. We also clearly have
\[
\left| \frac{F(1)+F(N)}{2N} \right|\leq \frac{1}{N}.
\]
To estimate the first summand on the right hand side of (\ref{eq:Euler}) observe
\[
\left(\frac{1}{p'(t)}e(p(t))\right)'=2\pi i  e(p(t)) - \frac{p''(t)}{(p'(t))^2} e(p(t)).
\]
This implies
\begin{eqnarray*}
\left|\frac{1}{N}\int_1^N F(t)\, dt\right| &\leq&
\left(\frac{1}{p'(N)} +\frac{1}{p'(1)}\right) \frac{1}{2\pi N} + \frac{1}{2\pi N} \int_1^N \left|\frac{p''(t)}{(p'(t))^2}\right| \, dt =:I+II.
\end{eqnarray*}
We have
$$
\frac{1}{Np'(N)}\leq \frac{M}{N^{\alpha-\veps}}<\frac{M}{N^{\delta}}
$$
and therefore $I$ converges to $0$ uniformly in $p$. Moreover,
$$
II\leq \frac{M^3}{2\pi N} \int_1^N \frac{t^{\alpha+\veps-2}}{t^{2\alpha-2\veps-2}} \, dt
  \leq \frac{M^3}{2\pi N} \int_1^N t^{-\delta}\, dt.
$$
The assertion follows.
\end{proof}

\begin{remark}\label{remark:bosh}
As pointed out to us by Michael Boshernitzan and follows from \cite{bosh94},
for $p\in \U$ with subpolynomial growth, there are three different types of behavior of $\aveN e(p(n))$.
\begin{itemize}
\item[1)] Assume that there is a rational polynomial $q$ such that $p-q$ is bounded (or, equivalently, has finite limit $c$) at $\infty$. Then one has
$$
\limaveN e(p(n)) = e(c)\limaveN e(q(n))
$$
which exists but does not necessarily equal $0$. For example, for $p$ with negative type the above limit is $1$.
\item[2)] Assume that there is a rational polynomial $q$ so that $(p-q)(s)/\log s$ is bounded (or, equivalently, has finite limit) at $\infty$, but $p-q$ is unbounded. In this case the limit does not exist, see the proofs of \cite[Theorem 1.3]{bosh94} and \cite[Theorem I.2.6]{kuipers/niederreiter} based on the Hardy-Littlewood Tauberian theorem. Hence $(e(p(n)))$ is not a good weight even for the mean ergodic theorem. 
\item[3)] For all other subpolynomial $p\in\U$, the sequence $(p(n))$ is uniformly distributed modulo $1$, see \cite[Theorem 1.3]{bosh94}, and in particular one has 
$$\limaveN e(p(n))=0.$$ 
This is for example the case for $p\in\U$ with finite non-integer type.
\end{itemize}
\end{remark}

We now allow functions to have any positive non-integer type (i.e.\ we allow any $m \geq 0$), and prove our Theorem \ref{hardy}, restated below:

\begin{theorem}\label{Hardythm}
Let $(X,\mu,T)$ be an ergodic \mps, $f\in L^1(X,\mu)$, $\delta\in (0,1/2)$, $m\in \N_0$ and $M\geq 1$. Then there is a subset $X'\subset X$ with $\mu(X')=1$ such that the averages
\begin{equation}\label{eq:ave-alpha}
\sup_{p\in \M_{\delta, M, m}} \left| \aveN e(p(n)) T^nf(x) \right|
\end{equation}
converge to $0$  for every $x\in X'$. Moreover, if $(X,\mu,T)$ is uniquely ergodic and $f\in C(X)$, then one has
\[
\lim_{N\to\infty} \sup_{p\in \M_{\delta, M, m}} \left\| \aveN e(p(n)) T^nf \right\|_\infty =0.
\]
\end{theorem}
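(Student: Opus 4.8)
The plan is to run an induction on $m$, bootstrapping the ``numbers only'' estimate of Lemma~\ref{lemma:numbers-unif-zero-hardy} by repeated application of van der Corput's inequality (Lemma~\ref{lemma:vdc}). First a reduction: by the Banach principle (Lemma~\ref{lemma:banach-ww}) the set of $f\in L^1(X,\mu)$ for which the asserted uniform-in-$p$ a.e.\ convergence holds is closed in $L^1$, and since $L^\infty$ is dense it suffices to treat $f\in L^\infty$; after rescaling we may assume $\|f\|_\infty\le 1$, so that $u_n:=e(p(n))T^nf(x)$ satisfies $|u_n|\le 1$ a.e., which is exactly what Lemma~\ref{lemma:vdc} requires. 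The uniquely ergodic statement is handled along the same lines, with Lemma~\ref{lemma:banach-ww} replaced by Remark~\ref{remark:banach-ww-uniform} and pointwise Birkhoff convergence by its uniform-in-$x$ version for continuous functions; the extra input there is that eigenfunctions of a uniquely ergodic system may be taken continuous, so that the Kronecker decomposition below stays inside $C(X)$ up to an arbitrarily small $L^2$-error.

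\textbf{Base case $m=0$.} Write $f=f_{\mathrm{Kr}}+f^\perp$ for the orthogonal splitting relative to the Kronecker factor; both pieces are bounded. For $f^\perp$, apply Lemma~\ref{lemma:vdc} to $u_n=e(p(n))T^nf^\perp(x)$: for $p\in\M_{\delta,M,0}$ the shifted germ $q_{p,h}(s):=p(s+h)-p(s)=\int_s^{s+h}p'(t)\,dt$ has \emph{negative} type, with $0<q_{p,h}(s)\lesssim_M h\,s^{-\delta}$ \emph{uniformly} in $p$, so $e(q_{p,h}(n))\to 1$ and hence for each fixed $h$ the averages $\frac1N\sum_{n\le N-h}e(q_{p,h}(n))(T^hf^\perp)(T^nx)\overline{f^\perp(T^nx)}$ converge for a.e.\ $x$ to $\langle T^hf^\perp,f^\perp\rangle$, uniformly in $p$, by Birkhoff. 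Feeding this back into Lemma~\ref{lemma:vdc} and letting $N\to\infty$ then $H\to\infty$ bounds $\limsup_N\sup_{p}\bigl|\frac1N\sum_{n\le N}e(p(n))T^nf^\perp(x)\bigr|^2$ by $\lim_H\frac1H\sum_{h\le H}|\langle T^hf^\perp,f^\perp\rangle|$, which vanishes since the spectral measure of $f^\perp$ is non-atomic (Wiener's lemma and Cauchy--Schwarz). For $f_{\mathrm{Kr}}$, by linearity and a further appeal to Lemma~\ref{lemma:banach-ww} it is enough to treat an eigenfunction $T\phi=e(\beta)\phi$; then $\frac1N\sum_{n\le N}e(p(n))T^n\phi(x)=\phi(x)\cdot\frac1N\sum_{n\le N}e(p(n)+n\beta)$, and summation by parts, using $\bigl|\sum_{n\le m}e(n\beta)\bigr|\lesssim_\beta 1$ when $\beta\notin\Z$ (and Lemma~\ref{lemma:numbers-unif-zero-hardy} itself when $\beta=0$) together with $|e(p(n+1))-e(p(n))|\lesssim_M n^{-\delta}$, yields $\sup_{p\in\M_{\delta,M,0}}\bigl|\frac1N\sum_{n\le N}e(p(n)+n\beta)\bigr|\lesssim_{M,\delta,\beta}N^{-\delta}\to 0$. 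Combining the two pieces settles $m=0$.

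\textbf{Inductive step.} Assume the theorem for $m-1$, and partition $\M_{\delta,M,m}=\bigcup_{k=0}^m\M^{(k)}$ according to the value of $k$ in (\ref{M_delta,M,m}); being a finite union, it suffices to control $\sup_{p\in\M^{(k)}}$ for each $k$. For $k=0$ one has $\M^{(0)}\subset\M_{\delta,M,0}$, so the base case applies. For $1\le k\le m$, apply Lemma~\ref{lemma:vdc} to $u_n=e(p(n))T^nf(x)$: the cross terms are $u_{n+h}\ol{u_n}=e(q_{p,h}(n))\,g_h(T^nx)$ with $g_h:=(T^hf)\ol f\in L^\infty$ and $q_{p,h}(s)=p(s+h)-p(s)$. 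Since $q_{p,h}^{(j)}(s)=\int_s^{s+h}p^{(j+1)}(t)\,dt$, the bounds (\ref{M_delta,M,m}) for $p$ translate into the same bounds with all exponents lowered by one for $q_{p,h}$, so $q_{p,h}\in\M_{\delta',M'(h),k-1}$ for some $\delta'=\delta'(\delta)\in(0,1/2)$ and some $M'(h)\ge1$ depending on $M,h,\delta$ but \emph{not} on the particular $p\in\M^{(k)}$; note $k-1\le m-1$. Applying the inductive hypothesis to each fixed $g_h$ and the class $\M_{\delta',M'(h),k-1}$ produces a full-measure set $X'_h$ on which $\sup_{q\in\M_{\delta',M'(h),k-1}}\bigl|\frac1N\sum_{n\le N}e(q(n))g_h(T^nx)\bigr|\to0$; intersecting over $h\in\N$ and substituting into Lemma~\ref{lemma:vdc} gives, for each fixed $H$, $\limsup_N\sup_{p\in\M^{(k)}}\bigl|\frac1N\sum_{n\le N}e(p(n))T^nf(x)\bigr|^2\le\frac1{H+1}$, and $H\to\infty$ closes the induction. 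The uniquely ergodic case follows verbatim with uniform Birkhoff convergence replacing the pointwise one.

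\textbf{Where the difficulty lies.} Everything delicate is concentrated in the base case. The crucial point is that the Kronecker component must be controlled \emph{uniformly in $p$}: a qualitative appeal to equidistribution (Remark~\ref{remark:bosh}) does not suffice, and this is what forces the quantitative summation-by-parts bound on $\frac1N\sum_{n\le N}e(p(n)+n\beta)$. The second point, needed for a clean inductive step, is checking that the differenced germ $q_{p,h}=p(\cdot+h)-p(\cdot)$ really falls into a class $\M_{\delta',M'(h),k-1}$ with parameters depending on $h$ but not on $p\in\M^{(k)}$ — it is exactly this uniformity that lets a single invocation of the inductive hypothesis (for the fixed function $g_h$) dominate the entire supremum over $p$.
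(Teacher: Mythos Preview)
Your overall plan matches the paper's: induction on $m$ with van der Corput (Lemma~\ref{lemma:vdc}) driving the inductive step. The differences are in execution, and two of them create gaps.

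\textbf{Base case.} The paper does not touch the Kronecker factor. It uses the von Neumann decomposition $L^1=\C\cdot 1\oplus\overline{\{g-Tg:g\in L^\infty\}}$: constants are handled by Lemma~\ref{lemma:numbers-unif-zero-hardy}, and for a coboundary $g-Tg$ with $\|g\|_\infty\le1$ a one-line telescoping sum gives
\[
\aveN e(p(n))(T^ng-T^{n+1}g)(x)=\frac{e(p(1))Tg(x)-e(p(N))T^{N+1}g(x)}{N}+\frac{1}{N}\sum_{n<N}\bigl(e(p(n+1))-e(p(n))\bigr)T^ng(x),
\]
bounded by $\frac{2}{N}+\frac{2\pi M}{N}\sum_{n<N}n^{-\delta}$ uniformly in $p$ and in $x$. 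No van der Corput, no Wiener's lemma, and the uniquely ergodic upgrade is automatic because coboundaries of continuous $g$ are continuous and $\{g-Tg:g\in C(X)\}\oplus\C\cdot 1$ is $L^1$-dense. Your Kronecker route is correct for the a.e.\ statement but is much heavier machinery for the same conclusion, and the uniquely ergodic claim has a gap: your $f^\perp$ argument needs \emph{uniform} Birkhoff for $g_h=(T^hf^\perp)\overline{f^\perp}$, hence continuity of $f^\perp$, which the Kronecker projection does not preserve. Your remark that the decomposition ``stays inside $C(X)$ up to a small $L^2$-error'' does not close this: the continuous approximant $f-\phi$ (with $\phi$ a finite sum of continuous eigenfunctions) is no longer orthogonal to the Kronecker factor, so neither your eigenfunction estimate nor your $f^\perp$ estimate applies to it, and Remark~\ref{remark:banach-ww-uniform} cannot bridge from the non-continuous $f^\perp$.

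\textbf{Inductive step.} You set $q_{p,h}(s)=p(s+h)-p(s)$; the paper instead uses the Taylor polynomial $q_h(s)=\sum_{j=1}^k\frac{h^j}{j!}p^{(j)}(s)$ and absorbs the remainder $p(n+h)-p(n)-q_h(n)=O_{M,k}(h^{k+1}n^{-\delta})$ as an error term. The point is that $\M_{\delta',M',k-1}$ is by definition a subset of $\U$, and since Hardy fields are closed under differentiation and are fields, $q_h$ lies in the same Hardy field as $p$, hence in $\U$. You do not argue that $p(\cdot+h)-p(\cdot)\in\U$; this is not part of the definition ($\U$ is a union of fields, not itself a field, and closure under translation is not given), so as written you cannot invoke the inductive hypothesis on $q_{p,h}$. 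Two easy fixes: either switch to the Taylor polynomial as the paper does, or observe at the outset that nothing in your base case or inductive step uses membership in $\U$ beyond the derivative bounds (\ref{M_delta,M,m}), and run the entire induction over that larger class of smooth functions.
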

In particular, the weights $(e(p(n)))$ are uniform Wiener-Wintner weights for $p\in \M_{\delta, M, m}$ and, if the system is invertible, Wiener-Wintner weights for subpolynomial $p\in \U$ with positive non-integer type. (For the last assertion recall that each such eventually positive $p$ belongs to one of the classes $\M_{\delta, M, m}$ after a possible left translation.)

\begin{proof}
We will argue by induction on $m$ and first discuss the case $m=0$. Take $p\in \M_{\delta,M,0}$, i.e., assume that (\ref{M_delta,M,m}) holds for some
\[ \alpha\in[\delta,1-\delta], \ \veps<\min\{(\alpha-\delta)/3, 1-\alpha-\delta\} \]
and $k=0$.
Let $(X,\mu,T)$ be an ergodic measure-preserving system. Recall the von Neumann decomposition
\begin{equation}\label{eq:von-Neumann-decomp}
L^1(X,\mu)=\C\cdot 1 \oplus \overline{\{f-Tf,\ f\in L^\infty(X,\mu)\}}^{\|\cdot\|_1}.
\end{equation}
For constant $f$, the averages (\ref{eq:ave-alpha}) converge uniformly to $0$ by Lemma \ref{lemma:numbers-unif-zero-hardy}. By (\ref{eq:von-Neumann-decomp}) and Lemma \ref{lemma:banach-ww}, it remains to show the assertion for functions of the form $f-Tf$ for $f\in L^\infty(X,\mu)$.

% inductive step

Let $f\in L^\infty(X,\mu)$ with $\|f\|_\infty\leq 1$  and observe by the telescopic sum argument
\begin{eqnarray*}
\aveN e(p(n)) (T^nf-T^{n+1}f)(x)&=&
\frac{e(p(1)) Tf(x)-e(p(N))T^{N+1}f (x)}{N} \\
&+& \frac{1}{N} \sum_{n=1}^{N-1} (e(p(n+1))-e(p(n)))T^nf (x).
\end{eqnarray*}
Take $x$ with $|T^nf(x)|\leq 1$ for every $n\in\N$. Then in the above, the first term
is bounded by $2/N$
and the second by
\begin{equation}\label{eq:difference}
 \frac{1}{N} \sum_{n=1}^{N-1} |e(p(n+1))-e(p(n))|.
\end{equation}
By the mean value theorem and the assumption on $p$ one has
\[
|e(p(n+1))-e(p(n))|\leq 2\pi \sup_{s\in[n,n+1]} |p'(s)| \leq 2\pi M n^{\alpha+\veps-1}
\]
and therefore (\ref{eq:difference}) is bounded by $2\pi M$ times
\[
 \frac{1}{N} \sum_{n=1}^{N-1}  n^{\alpha+\veps-1} \leq  \frac{1}{N} \sum_{n=1}^{N-1} n^{-\delta}.
\]
The uniform convergence to $0$ follows.

The last assertion of the theorem follows analogously again using the fact that for a uniquely ergodic system $(X,\mu,T)$ and $f\in C(X)$, Birkhoff's ergodic averages converge uniformly in $x$ and Remark \ref{remark:banach-ww-uniform}.

After having established the case $m=0$, assume that the theorem holds for $m\in \{1,\ldots, k-1\}$ and we will show the assertion for $m=k$. Assume that $p$ satisfies (\ref{M_delta,M,m}) for $m=k$.
By Lemma \ref{lemma:banach-ww} we can  assume without loss of generality that $\|f\|_\infty\leq 1$ and take $x$ with $|T^nf(x)|\leq 1$ for every $n\in \N$.
 We are going to use the van der Corput trick from Lemma \ref{lemma:vdc} for $u_n:=e(p(n))f(T^nx)$. Observe first that for $h\in\N$
$$
u_{n+h}\ol{u_n}=e(p(n+h)-p(n))T^n(T^hf\cdot \ol{f})(x).
$$
By Taylor's formula and the assumption on $p$ we have
\[
\aligned
&\left|p(n+h)-p(n)-p'(n)h -\ldots- \frac{p^{(k)}(n) h^{k}}{k!}\right|\\
& \qquad \leq \frac{ h^{k+1}\sup_{s\in [n,n+h]} |p^{(k+1)}(s)|}{(k+1)!}\\
& \qquad \leq \frac{M h^{k+1} n^{\alpha+\veps-1}}{(k+1)!} \\
& \qquad \leq \frac{M h^{k+1} n^{-\delta}}{(k+1)!}.
\endaligned \]

Define $q_h\in U$ by
$$
q_h(s):= p'(s)h +\ldots+ \frac{p^{(k)}(s) h^{k}}{k!}
$$
and observe that $q_h\in M_{\delta,\tilde{M},k-1}$ for some $\tilde{M}$ depending on $\delta, M$ and $h$.
Thus we have
$$
u_{n+h}\ol{u_n}=e(q_{h}(n))T^n(T^hf\cdot \ol{f})(x) + O_{M,k}(h^{k+1} n^{-\delta}).
$$
By Lemma \ref{lemma:vdc}, we thus have, for some constant $C_{M,k}$, depending only on $M$ and $k$, and arbitrary $H, N\in\N$ with $H\leq N$
\begin{eqnarray*}
\left|\aveN u_n\right|^2&\leq& \frac{2(N+H)}{N(H+1)}\sum_{h=1}^H
\left|\frac{1}{N-h}\sum_{n=1}^{N-h}e(q_h(n))T^n(T^hf\cdot \ol{f})(x)\right| \\
&+& C_{M,k} \frac{(N+H)H^k}{N} \aveN n^{-\delta} + \frac{N+H}{N(H+1)}.
\end{eqnarray*}
Now take  $x$ for which additionally the assertion of the theorem is satisfied for functions $T^nf\cdot \ol{f}$ for every $n\in\N$ as well as $\delta$, $m=k-1$ and $\tilde{M}$. Such $x$ form a full measure set by the  induction hypothesis and we conclude that for every $H\in\N$
$$
\limsup_{N\to\infty}\sup_{p\in \M_{\delta, M, k}} \left|\aveN u_n\right|^2
\leq \frac{1}{(H+1)}.
$$
Letting $H\to\infty$ finishes the argument.
\end{proof}

It is natural to ask how restrictive  the class of Hardy weights of positive non-integer type, or the classes $\M_{\delta, M, k}$, are for the (uniform) Wiener-Wintner convergence of the averages (\ref{eq:hardy}). 
For $p\in\U$ with transpolynomial growth, the behavior of $e(p(n))$ can be arbitrarily bad, see Boshernitzan \cite[Theorem 1.6]{bosh94}, so we restrict ourselves to subpolynomial $p\in\U$. Remark \ref{remark:bosh},2) shows that the family $(e(p(n))_{n\in\N})$, $p\in\U$ subpolynomial, is not a Wiener-Wintner family for the ergodic theorem. So one needs some restrictions on the class $p$ to exclude functions from Remark \ref{remark:bosh},2) and, for convergence to \emph{zero}, also to exclude functions from Remark \ref{remark:bosh},1). We remark that by Frantzikinakis \cite[Section 3]{frantWW}, the class of subpolynomial $p\in\U$ with positive non-integer type for which one has convergence of the averages (\ref{eq:hardy}) to zero cannot be enlarged to include all (even quadratic) irrational polynomials.

%Instead of going in the direction of finding the maximal class of $p\in\U$ which is a family Wiener-Wiener weights (with limit zero), we observe the following.
We now look at this question from a different perspective. 
As mentioned above, one of the problems for general $p$ is possible divergence of $\aveN e(p(n))$ and hence divergence of weighted ergodic averages (\ref{eq:hardy}) for $f=\mathbf{1}$. If we restrict ourselves to a smaller class of functions $f$ than the whole $L^1(X,\mu)$, 
then the class of $p$ can be enlarged, as the following shows. 

Let us consider the following (large) classes: For $m\in\N_0$, $\delta, M>0$ we denote by $\LL_{\delta,M,m}$ the class  of all $p\in\U$ such that  
there exist $k\leq m$  with
\begin{equation}\label{L_delta,M,m}
|p^{(j)}(s)|\leq M s^{k-\delta-j} \text{ for all }s\geq 1 \text{ and } j=0,\ldots,k.
\end{equation}
By Remark \ref{rem:class-M}, every subpolynomial $p\in\U$ belongs to some $\LL_{\delta,M,m}$. 

We recall that  for $f\in L^\infty(X,\mu)$ the \emph{Gowers-Host-Kra} (or \emph{uniformity}) \emph{seminorms} $\|\cdot\|_{U^{m}}$ are defined inductively as follows:
\begin{eqnarray*}
\|f\|_{U^{1}}&:=&\left|\int_X f\,d\mu\right|,\\
\|f\|_{U^{m}}^{2^{m}}&:=&\limsup_{N\to\infty}\aveN \|T^nf\cdot\ol{f}\|_{U^{m-1}}^{2^{m-1}},\quad m\geq 2.
\end{eqnarray*}
For an equivalent definition and properties of the Gowers-Host-Kra seminorms we refer to Host, Kra \cite{HK05} and Eisner, Tao \cite{ET}.

%For these classes we have the following uniform estimate. 
\begin{theorem}[Uniform estimate of averages (\ref{eq:hardy})]\label{thm:Hardy-seminorms}
Let $(X,\mu,T)$ be as above, let $m\in \N_0$ and $\delta, M>0$. 
Then for every $f\in L^\infty(X,\mu)$ the inequality
\[
\limsup_{N\to\infty}\sup_{p\in \LL_{\delta,M,m}}\left|\aveN e(p(n)) T^nf(x)\right| \leq \|f\|_{U^{m+1}}
\]
holds for a.e.~$x\in X$.  Moreover, if $(X,\mu,T)$ is uniquely ergodic and $f\in C(X)$, then one has
\[
\lim_{N\to\infty} \sup_{p\in \LL_{\delta,M,m}} \left\| \aveN e(p(n)) T^nf \right\|_\infty  \leq \|f\|_{U^{m+1}}.
\]
\end{theorem}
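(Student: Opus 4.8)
The plan is to run an induction on $m$ that mirrors the proof of Theorem~\ref{Hardythm}, but now carrying $\|f\|_{U^{m+1}}$ as the target bound. Since the averages and the seminorms are all homogeneous in $f$, I may assume $\|f\|_\infty\le 1$ and work on the full-measure set of $x$ with $|T^nf(x)|\le 1$ for all $n$. For the base case $m=0$, a function $p\in\LL_{\delta,M,0}$ satisfies $|p(s)|\le Ms^{-\delta}$, hence $|e(p(n))-1|\le 2\pi Mn^{-\delta}$, so
\[
\sup_{p\in\LL_{\delta,M,0}}\Bigl|\aveN e(p(n))T^nf(x)\Bigr|\ \le\ \Bigl|\aveN T^nf(x)\Bigr|\ +\ 2\pi M\aveN n^{-\delta};
\]
for $x$ generic for $f$ the right side tends to $\bigl|\int_X f\,d\mu\bigr|=\|f\|_{U^1}$, which is exactly the bound. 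In the uniquely ergodic case with $f\in C(X)$ the same inequality holds with $\sup_{x\in X}$ inserted, using uniform convergence of Birkhoff averages.

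For the inductive step I would assume the statement with $m-1$ in place of $m$ (for all admissible $\delta,M$ and all $L^\infty$ functions) and fix $p\in\LL_{\delta,M,m}$ with witnessing index $k\le m$; if $k<m$ apply the hypothesis directly, so take $k=m$. Apply van der Corput's inequality (Lemma~\ref{lemma:vdc}) to $u_n:=e(p(n))f(T^nx)$, noting $u_{n+h}\ol{u_n}=e\bigl(r_h(n)\bigr)\,T^n(T^hf\cdot\ol{f})(x)$ with $r_h(s):=p(s+h)-p(s)$. The key observation is that $r_h\in\LL_{\delta,\tilde M_h,m-1}$ with $\tilde M_h$ depending only on $\delta,M,h$: indeed $r_h^{(i)}(s)=\int_0^h p^{(i+1)}(s+t)\,dt$ involves only the controlled derivatives $p^{(i+1)}$ for $i+1\le k$, and integrating $|p^{(i+1)}(s)|\le Ms^{k-\delta-(i+1)}$ gives $|r_h^{(i)}(s)|\lesssim_k Mh\,s^{(k-1)-\delta-i}$ for $i=0,\dots,k-1$ (first for $s\ge h$; the values on $[1,h]$ may be modified harmlessly, as this changes only finitely many summands of each average). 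Applying the inductive hypothesis to the countably many functions $T^hf\cdot\ol{f}$ and the parameters $\tilde M_h$ then produces a single full-measure set of $x$ on which
\[
\limsup_{N\to\infty}\ \sup_{p\in\LL_{\delta,M,m}}\ \Bigl|\tfrac1N\sum_{n=1}^{N-h}e(r_h(n))\,T^n(T^hf\cdot\ol{f})(x)\Bigr|\ \le\ \|T^hf\cdot\ol{f}\|_{U^m}\qquad(h\in\N).
\]

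Feeding these into Lemma~\ref{lemma:vdc} and letting $N\to\infty$ with $H$ fixed gives, for a.e.~$x$,
\[
\limsup_{N\to\infty}\ \sup_{p}\ \Bigl|\aveN u_n\Bigr|^2\ \le\ \frac{2}{H+1}\sum_{h=1}^H\Bigl(1-\tfrac{h}{H+1}\Bigr)\|T^hf\cdot\ol{f}\|_{U^m}\ +\ \frac1{H+1}.
\]
Since $\sum_{h=1}^H\bigl(1-\tfrac h{H+1}\bigr)=H/2$, the first term is $\tfrac{H}{H+1}$ times a genuine convex average of the $\|T^hf\cdot\ol{f}\|_{U^m}$, hence by the power-mean inequality ($t\mapsto t^{2^m}$ convex) at most $\tfrac{H}{H+1}\bigl(\tfrac2H\sum_{h=1}^H(1-\tfrac h{H+1})\|T^hf\cdot\ol{f}\|_{U^m}^{2^m}\bigr)^{1/2^m}$. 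Using that the averages defining the Host--Kra seminorms converge, a summation by parts identifies $\tfrac2H\sum_{h=1}^H(1-\tfrac h{H+1})\|T^hf\cdot\ol{f}\|_{U^m}^{2^m}\to\|f\|_{U^{m+1}}^{2^{m+1}}$, so letting $H\to\infty$ yields $\limsup_N\sup_p|\aveN u_n|^2\le\|f\|_{U^{m+1}}^2$, which is the claim. The uniquely ergodic, $f\in C(X)$ statement runs through the identical induction with ``a.e.~$x$'' replaced by ``uniformly in $x\in X$'' (legitimate since $T^hf\cdot\ol{f}\in C(X)$), exactly as in Theorem~\ref{Hardythm}.

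I expect the two delicate points to be: (i) the verification that $r_h$ (or, if one prefers to Taylor-expand as in Theorem~\ref{Hardythm}, the truncated polynomial $q_h(s)=\sum_{j<k}p^{(j)}(s)h^j/j!$) lies in $\LL_{\delta,\tilde M_h,m-1}$ with constants \emph{independent of $p$} --- the definition of $\LL_{\delta,M,k}$ only controls $p,\dots,p^{(k)}$, so one must either exploit the integral representation of $r_h$ as above, or invoke the Hardy-field differentiation estimates behind Remark~\ref{rem:class-M}; and (ii) extracting the \emph{sharp} constant $\|f\|_{U^{m+1}}$ rather than a multiple of it, which is precisely what forces the weighted van der Corput inequality, the power-mean step, and the use of convergence (not merely $\limsup$) of the averages defining the seminorms. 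Everything else is routine bookkeeping.
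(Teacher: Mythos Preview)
Your argument is correct and follows essentially the same scheme as the paper's proof: induction on $m$, base case via $|e(p(n))-1|\le 2\pi Mn^{-\delta}$ and genericity, inductive step via van der Corput, then Jensen for the power-mean and a Ces\`aro/summation-by-parts passage to $\|f\|_{U^{m+1}}^{2^{m+1}}$.

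The one substantive variation is that you work with the exact difference $r_h(s)=p(s+h)-p(s)$ and show $r_h\in\LL_{\delta,\tilde M_h,m-1}$ via the integral representation $r_h^{(i)}(s)=\int_0^h p^{(i+1)}(s+t)\,dt$, whereas the paper replaces $r_h$ by its Taylor polynomial $q_h(s)=\sum_{j=1}^{m}\frac{h^j}{j!}p^{(j)}(s)$ at the cost of an $O_{M,m}(h^{m+1}n^{-\delta})$ error term. Your route is slightly cleaner here: it needs only $p',\dots,p^{(k)}$ (exactly what $\LL_{\delta,M,k}$ provides) to verify membership in the lower class, and it dispenses with the error term in van der Corput --- precisely the point you flag as delicate in (i). One small caveat: the class $\LL$ is defined inside $\U$, and $r_h$ need not literally lie in a Hardy field; but since the induction uses only the derivative bounds and never the Hardy-field structure, this is harmless.
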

\noindent In particular, for every $f$ orthogonal to the Host-Kra factor $\mathcal{Z}_m$, the averages (\ref{eq:hardy}) converge a.e.~to zero 
%uniformly 
in $p\in \U$ with type strictly less than $m$ (uniformly in $p\in \LL_{\delta,M,m}$ for every $\delta$ and $M$), and the convergence is uniform in $x$ whenever $(X,\mu,T)$ is uniquely ergodic and $f\in C(X)$.

\begin{proof} 
We proceed by induction in $m$ and assume without loss of generality that $\|f\|_\infty\leq 1$. 

Assume that $p\in \LL_{\delta,M,0}$, i.e., $|p(s)|\leq Ms^{-\delta}$ for every  $s\geq 1$. 
Then we have for every generic $x$ with $|T^nf(x)|\leq 1$  $\forall n\in\N$
\begin{eqnarray*}
\left|\aveN e(p(n)) T^nf(x)\right| &\leq& \aveN |e(p(n))-1| |T^nf(x)| + \left|\aveN T^nf(x)\right| \\
&\leq& 2\pi M\cdot \aveN  n ^{-\delta}  + \left|\aveN T^nf(x)\right| 
\end{eqnarray*}
which converges to $|\int_X f\,d\mu|=\|f\|_{U^1}$ uniformly in $p\in \LL_{\delta,M,0}$.

Assume now that the assertion holds for $m$ and we show that it holds for $m+1$. Take $p\in \LL_{\delta,M,m+1}$ and denote $u_n:=e(p(n)) T^nf(x)$. Take $x\in X$ such that $|T^nf(x)|\leq 1$ holds for every  $n\in\N$. As in the proof of Theorem \ref{Hardythm} observe
\begin{eqnarray*}
u_{n+h}\ol{u_n}&=&e(p(n+h)-p(n)) T^n(T^hf\cdot \ol{f})(x) \\
&=&e(q_h(n)) T^n(T^hf\cdot \ol{f})(x) + O_{M,m}(h^{m+1}n^{-\delta})
\end{eqnarray*}
for $q_h(s)=hp'(s)+\ldots+\frac{h^m p^{(m)}(s)}{m!}$. The function $q_h\in\U$ satisfies $q_h\in \LL_{\delta,\tilde{M},m}$ for some constant $\tilde{M}$ depending on $M$ and $h$. 

By Lemma \ref{lemma:vdc}, we thus have for some constant $C_{M,m}$, depending only on $M$ and $m$, and arbitrary $H, N\in\N$ with $H\leq N$
\begin{eqnarray*}
\left|\aveN u_n\right|^2&\leq& \frac{2(N+H)}{N^2(H+1)}\sum_{h=1}^H\left(1-\frac{h}{H}\right)
\left|\sum_{n=1}^{N-h}e(q_h(n))T^n(T^hf\cdot \ol{f})(x)\right| \\
&+& C_{M,m} \frac{(N+H)H^k}{N} \aveN n^{-\delta} + \frac{N+H}{N(H+1)}.
\end{eqnarray*}
Now take  $x$ for which in addition the assertion of the theorem is satisfied for functions $T^nf\cdot \ol{f}$ for every $n\in\N$ as well as $\delta$, $m$ and $\tilde{M}$. Such $x$ form a full measure set by the  induction hypothesis and we conclude that for every $H\in\N$ using the Cauchy-Schwarz inequality (or the convexity of the function $s\mapsto s^{2^m}$)
\begin{eqnarray}
\limsup_{N\to\infty}\sup_{p\in \M_{\delta, M, k}} \left|\aveN u_n\right|^2
\leq \frac{2}{(H+1)}\sum_{h=1}^H \left(1-\frac{h}{H+1}\right) \|T^hf\cdot \ol{f}\|_{U^{m}} + \frac{1}{H+1}\nonumber\\
\leq \frac{H+1}{H}\left(\frac{2}{H(H+1)}\sum_{h=1}^H \left(H+1-h\right) \|T^hf\cdot \ol{f}\|_{U^{m}}^{2^m}\right)^{1/2^m} + \frac{1}{H+1}.\label{eq:smooth-ave}
\end{eqnarray}
Since for every bounded sequence $\{a_h\}_{h=1}^\infty\subset \C$ and its partial sums $s_h:=\sum_{j=1}^h a_j$, $h\in\N$, we have
$$
\limsup_{H\to\infty} \frac{2}{H(H+1)} \sum_{h=1}^H (H+1-j)a_h
=
\limsup_{H\to\infty} \frac{2}{H(H+1)} \sum_{h=1}^H s_h 
\leq 
\limsup_{H\to\infty}  \frac{s_H}{H},
$$    
letting $H\to\infty$ in (\ref{eq:smooth-ave}) finishes the proof.
\end{proof}

\begin{remark}
\begin{itemize}
\item[1)]
By Lemma \ref{lemma:banach-ww} and the inequality $\|f\|_{U^m}\leq\|f\|_{L^{p_m}}$ for $p_m:=\frac{2^m}{m+1}$, see Eisner, Tao \cite{ET}, one can extend the assertion of Theorem \ref{thm:Hardy-seminorms} to every $f\in L^{p_{m+1}}(X,\mu)$.
\item[2)] 
Theorem \ref{thm:Hardy-seminorms}, the Host-Kra structure theorem, see \cite{HK05}, and Lemma \ref{lemma:banach-ww} imply that the question of finding the largest (or a maximal) family of Wiener-Wiener weights (with limit zero or not) for general ergodic measure-preserving systems restricts to the question of finding such a family for nilsystems. Analogously, a Hardy field weight is a good weight for the pointwise ergodic theorem if and only if it is a good weight for nilsystems. 
\end{itemize}
\end{remark}

%%%%%%%%%%%%%%%%%%%%%%%%%%%%%%%%%%%%%%%%%%%%%
%                                                                                                                      %
%      WW squares: badly appr. numbers                                                           %
%                                                                                                                      %
%%%%%%%%%%%%%%%%%%%%%%%%%%%%%%%%%%%%%%%%%%%%%

\section{Wiener-Wintner Convergence of Twisted Square Means}
In this section we present a Wiener-Wintner type result for weighted polynomial averages for a subclass of badly approximable numbers $\theta$.
%The approach gives also an alternative proof for convergence of weighted polynomial averages for badly approximable numbers.

It is well-known that the set of badly approximable numbers (i.e., numbers which are badly approximable for some $c>0$) build a perfect compact subset of $[0,1]$ with zero Lebesgue measure and Hausdorff dimension $1$, see Hutchinson \cite{hutchinson}. Moreover, $\theta$ is badly approximable if and only if its coefficients in the continued fraction expansion are bounded. We recall that the sequence $\{ a_j \} \in \N$ is called the continued fraction expansion of $\theta\in (0,1)$ if
$$
\theta=\frac{1}{a_1+\frac{1}{a_2+\frac{1}{a_3+...}}}
$$
holds. There is the following relation between the constant $c$ and the bound of the continued fraction expansion coefficients $M:=\sup\{a_j,\, j\in\N\}$:
$$
\frac{1}{\inf\{a_j,\, j\in\N\}} \leq c \leq \frac{1}{(M+2)(M+1)^2},
$$
see Khintchine \cite[Proof of Theorem 23]{khintchine}.

The starting point in our analysis of Weyl sums (involving badly approximable $\theta$) is as always the classical estimate of exponential sums due to Weyl, see e.g.~Vaughan \cite[Lemma 2.4]{VA}.
\begin{lemma}[Weyl]\label{weyl}
Let $P$ be a polynomial of degree $d$ with leading coefficient $\alpha$, and let $p,q$ be relatively coprime with $|\alpha-p/q|<1/q^2$. Then for every  $\veps>0$ and $N\in \N$,
\[
\left| \sum_{n=1}^N e(P(n)) \right| \lesssim_\veps N^{1+\veps} \left(\frac{1}{q} + \frac{1}{N} + \frac{q}{N^d} \right)^{1/2^{d-1}}.
\]
\end{lemma}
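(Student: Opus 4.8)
The plan is the classical Weyl differencing argument; I do not expect to do anything new here. Write $S := \sum_{n=1}^{N} e(P(n))$ and let $\|t\|$ denote the distance from $t \in \RR$ to the nearest integer. Since for $d = 1$ the bound is immediate from summing a geometric progression, assume $d \ge 2$. First I would difference $S$ down to a linear phase: starting from $|S|^2 = \sum_{|h| < N} \sum_{n} e\bigl(P(n+h) - P(n)\bigr)$, where the inner sum runs over an interval contained in $\{1,\dots,N\}$ and $P(\cdot+h) - P(\cdot)$ has degree $d-1$ with leading coefficient $d\alpha h$, one iterates $d-1$ rounds of squaring-and-differencing (each a van der Corput / Cauchy--Schwarz step, cf.\ Lemma \ref{lemma:vdc}), picking up a power of $N$ at each stage. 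The outcome is
\[
|S|^{2^{d-1}} \;\lesssim\; N^{2^{d-1}-d} \sum_{h_1=1}^{N}\cdots\sum_{h_{d-1}=1}^{N} \Bigl| \sum_{n \in I} e\bigl(d!\,\alpha\,h_1\cdots h_{d-1}\,n\bigr)\Bigr|,
\]
where $I = I(h_1,\dots,h_{d-1}) \subseteq \{1,\dots,N\}$ is an interval not depending on $n$. That the linear coefficient equals $d!\,\alpha h_1\cdots h_{d-1}$ and that the prefactor is exactly $N^{2^{d-1}-d}$ is the content of the Weyl differencing lemma (e.g.\ Vaughan \cite[Lemma 2.3]{VA}).

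Next, the inner geometric progression is $\lesssim \min\bigl(N,\|d!\,\alpha h_1\cdots h_{d-1}\|^{-1}\bigr)$. Substituting $m = d!\,h_1\cdots h_{d-1}$ and invoking the divisor bound — every $m \le d!\,N^{d-1}$ has $O_\veps(N^\veps)$ representations as an ordered product of $d-1$ integers from $\{1,\dots,N\}$ — collapses the multiple sum to
\[
|S|^{2^{d-1}} \;\lesssim_\veps\; N^{2^{d-1}-d+\veps}\sum_{m \le d!\,N^{d-1}} \min\bigl(N,\|m\alpha\|^{-1}\bigr).
\]

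The one genuinely arithmetic ingredient, and the only place the hypothesis $|\alpha - p/q| < 1/q^2$ with $(p,q) = 1$ is used, is the estimate
\[
\sum_{m \le M}\min\bigl(N,\|m\alpha\|^{-1}\bigr) \;\lesssim\; \Bigl(\frac{M}{q}+1\Bigr)\bigl(N + q\log q\bigr),
\]
which follows by partitioning $\{1,\dots,M\}$ into $\lceil M/q\rceil$ blocks of $q$ consecutive integers; inside each block the fractional parts $\{m\alpha\}$ are $\gtrsim 1/q$-separated, so the block contributes $\lesssim N + q\log q$. Taking $M = d!\,N^{d-1}$, expanding the product, and absorbing $\log q$ together with the lower-order contributions (such as $N^{2^{d-1}-d+1}\le N^{2^{d-1}-1}$ for $d\ge 2$) into a fresh $N^\veps$, every resulting term is $\lesssim N^{2^{d-1}+\veps}\bigl(\tfrac1q + \tfrac1N + \tfrac{q}{N^d}\bigr)$; taking $2^{d-1}$-th roots and relabelling $\veps$ yields the claim.

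I expect the two potential sticking points to be the bookkeeping in the first paragraph — propagating the exact normalizing powers of $N$ and the leading coefficient $d!\,\alpha h_1\cdots h_{d-1}$ through the $d-1$ differencing rounds — and the block estimate for $\sum_{m\le M}\min(N,\|m\alpha\|^{-1})$, where the boundedness of the approximating denominator $q$ (equivalently, the good rational approximation to $\alpha$) is precisely what controls the spacing of $\{m\alpha\}$ and hence does the real work. Everything else is routine packaging, and indeed the whole statement is entirely standard (the paper itself merely cites Vaughan \cite[Lemma 2.4]{VA}).
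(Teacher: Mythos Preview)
Your sketch is correct and follows exactly the standard Weyl differencing argument from Vaughan \cite[Lemma~2.4]{VA}; since the paper does not supply its own proof but merely cites that reference, your approach coincides with the paper's.
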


We have the following quantitative estimation of Weyl's sums for square polynomials with non-leading coefficient being $c$-badly approximable, see \cite[Lemma A.5]{frantz/johnson/lesigne/wierdl} for the case of the golden ratio. The proof is a quantification of the argument in \cite{frantz/johnson/lesigne/wierdl}. % and is presented for the reader's convenience.

\begin{prop}\label{prop:weyl-badly-appr}
Let $\theta$ be $c$-badly approximable. Then for every $\veps>0$
$$
\sup_{\alpha\in \mathbb{R}}\left| \aveN e(n\theta+n^2\alpha) \right| \lesssim_\veps \frac{1}{c N^{1/32-\veps}}.
$$
\end{prop}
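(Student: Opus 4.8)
The plan is to reduce the two-parameter supremum over $\alpha$ to a statement about a single Weyl sum and then apply Lemma \ref{weyl} with $d=2$, extracting a good rational approximation to the leading coefficient $\alpha$ from the structure of the problem while using the $c$-badly approximable hypothesis on $\theta$ to control the error terms. First I would note that $\sum_{n\le N} e(n\theta+n^2\alpha)$ is a Weyl sum for the quadratic polynomial $P(n)=n^2\alpha+n\theta$, whose leading coefficient is $\alpha$. By Dirichlet, choose coprime $p,q$ with $q\le N^{2}$ (say) and $|\alpha-p/q|<1/(qN^2)$; Lemma \ref{weyl} then gives
\[
\left|\aveN e(n\theta+n^2\alpha)\right|\lesssim_\veps N^{\veps}\left(\frac1q+\frac1N+\frac{q}{N^2}\right)^{1/2}.
\]
This is small unless $q$ is small, so the only problematic case is when $\alpha$ is close to a rational $p/q$ with small denominator $q$.

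In that problematic regime I would split off the near-rational part of $\alpha$: write $\alpha=p/q+\beta$ with $q$ small and $|\beta|$ small, and decompose the sum over $n\le N$ into arithmetic progressions modulo $q$. On each residue class $n\equiv r\pmod q$, the term $e(n^2 p/q)$ is constant, so the sum becomes (up to the constant phase) a sum of $e(n\theta + n^2\beta)$ over an arithmetic progression; the quadratic-in-$n$ part $n^2\beta$ has a small leading coefficient, and one can either absorb it by a further van der Corput / summation-by-parts step or handle it by partial summation once $|\beta|$ is genuinely small, leaving essentially a linear exponential sum $\sum e(n\theta)$ along the progression. Since $\theta$ is $c$-badly approximable, $q\theta$ stays a bounded distance ($\gtrsim c/q$) from every integer, so the geometric sum $\sum_{n\equiv r} e(n\theta)$ over a progression of common difference $q$ is bounded by $\lesssim 1/\|q\theta\|\lesssim q/c$; summing over the $q$ residue classes and dividing by $N$ yields a bound $\lesssim q^2/(cN)$, which is again small provided $q$ is not too large relative to $N$.

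The two regimes — $q$ large (use Weyl directly) versus $q$ small (use the badly-approximable lower bound on $\|q\theta\|$) — are balanced by optimizing the threshold for "small $q$", and the exponent $1/32$ (with an $\veps$ loss) comes out of this optimization together with the $1/2^{d-1}=1/2$ loss in Weyl's inequality for $d=2$ and the bookkeeping of how the $\beta$-term is disposed of; the factor $1/c$ tracks the single use of $\|q\theta\|\gtrsim c/q$. I expect the main obstacle to be the intermediate case where $\alpha$ is moderately close to a rational of moderate denominator: there one genuinely needs to combine the partial-summation control of the residual quadratic phase $n^2\beta$ with the badly-approximable estimate on the linear phase, and keeping the exponents aligned through this step (so that the final bound is uniform in $\alpha\in\mathbb{R}$ and loses only the stated power of $N$) is where the quantitative care of \cite{frantz/johnson/lesigne/wierdl} is required. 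Once the case analysis is set up, each individual estimate is routine, and taking the supremum over $\alpha$ is immediate since every bound obtained is uniform in $\alpha$.
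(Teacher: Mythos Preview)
Your architecture matches the paper's: apply Weyl's lemma (Lemma~\ref{weyl}, $d=2$) when the Dirichlet denominator $q$ of $\alpha$ lies in an intermediate range; for small $q$, replace $\alpha$ by $p/q$ via summation by parts (controlling the residual phase $e(n^2\beta)$) and split $\sum_n e(n\theta+n^2p/q)$ into residue classes mod~$q$, reducing to linear sums $\sum_k e(kq\theta)$. One small slip: with Dirichlet range $q\le N^2$ the term $q/N^2$ in Weyl's bound is uncontrolled near $q\approx N^2$; the paper takes the range $q\le N^{2-1/16}$ to force $q/N^2\le N^{-1/16}$, and you need some such truncation.

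The substantive difference is in how you estimate the linear sums. You invoke the geometric bound $\bigl|\sum_k e(kq\theta)\bigr|\le 1/(2\|q\theta\|)\le q/(2c)$, using the badly-approximable hypothesis directly as $\|q\theta\|\ge c/q$. The paper instead applies a second Dirichlet approximation to $q\theta$ with denominator $u\le N^{1/2}$, uses $c$-badly-approximable only to force $u\gtrsim cN^{7/16}$, and then feeds this into Weyl's lemma for $d=1$. Your route is more elementary and in fact sharper: carrying your bound through the partial summation and optimizing the threshold gives an exponent of roughly $1/5$ rather than $1/32$, so your remark that ``$1/32$ comes out of this optimization'' undersells what your own method yields. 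Either way the stated proposition follows; the paper's extra Dirichlet step is the quantification of the argument in \cite{frantz/johnson/lesigne/wierdl} and is not needed for the bare $1/32$ bound.
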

\begin{proof}
Fix $\alpha\in \mathbb{R}$, $N\in \N$ and $\veps>0$. Since the assertion is effective only for $\veps<1/32$, we can assume that  $\veps<1/32$.
% $\gamma = 1/16$.
The Dirichlet principle implies the existence of $p,q$ relatively prime with $q<N^{2-1/16}$ such that
$$
\left| \alpha-\frac{p}{q} \right| \leq \frac{1}{qN^{2-1/16}}.
$$
If $N^{1/16}\leq q<N^{2-1/16}$, then Weyl's Lemma \ref{weyl} implies that for every $\veps>0$
\[
\left| \aveN e(n\theta + n^2\alpha) \right| \lesssim_\veps N^{\veps}
\left(\frac{1}{q} + \frac{1}{N} + \frac{q}{N^2} \right)^{1/2}\leq  N^{\veps}
\left(\frac{2}{N^{1/16}} + \frac{1}{N} \right)^{1/2} \lesssim \frac{1}{N^{1/32 - \veps}}.
\]
Since $c<1$, the assertion is proved for such $q$.

Thus we may assume from now on that $q<N^{1/16}$. Again the Dirichlet principle applied to $q\theta$ implies the existence of $t,u$ relatively prime with $u\leq N^{1/2}$ such that
\[
\left|\theta - \frac{t}{uq}\right| \leq \frac{1}{uqN^{1/2}}. %\leq \frac{1}{qu^2}.
\]
By the assumption on $\theta$ we have on the other hand
\[
\left|\theta - \frac{t}{uq}\right| \geq \frac{c}{q^2u^2}
\]
and we have $cN^{1/2}\leq uq< u N^{1/16}$ implying
\[
u> cN^{7/16}.
\] %1/2-1/16

Take $M\in \N$ with $N^{1-1/16}\leq M\leq N$. We now show that the sums
\[
S(M):=\sum_{n=1}^M e(n\theta + n^2 p/q),
\]
where $\alpha$ is replaced by its rational approximation $p/q$, satisfy
\begin{equation}\label{eq:S(M)}
|S(M)|\lesssim_\veps \frac{N^{1+\veps - 3/8}}{c} % 1/8???
\end{equation}
independently of $M$.

Observe
\begin{equation}\label{eq:S(M)-2}
|S(M)|= \left|\sum_{j=1}^q \sum_{k\geq 0, kq+j\leq M} e((qk+j)\theta + j^2p/q) \right|
\leq \sum_{j=1}^q \left| \sum_{k\geq 0, kq+j\leq M} e((qk+j)\theta) \right|.
\end{equation}
By Weyl's Lemma \ref{weyl} using $|q\theta - t/u|\leq 1/(N^{1/2}u)\leq 1/u^2$, the sum inside satisfies for every $\veps>0$
\[
\left| \sum_{k\geq 0, k\leq (M-j)/q} e((qk+j)\theta) \right| \leq_\veps \left(\frac{M}{q}\right)^{1+\veps} \left( \frac{1}{u} + \frac{q}{M} + \frac{uq}{M} \right).
\]
Remembering that $M\in [N^{1-1/16}, N]$, $q<N^{1/16}$ and $u\in [cN^{7/16},N^{1/2}]$, this is estimated by above by
\[
 \frac{N^{1+\veps}}{q}\left( \frac{1}{cN^{7/16}} + \frac{1}{N^{7/8}}  + \frac{1}{N^{3/8}} \right)
\lesssim \frac{ N^{1+\veps}}{q}  \frac{1}{cN^{3/8}}.
\]
This together with (\ref{eq:S(M)-2}) proves (\ref{eq:S(M)}).

We finally estimate the desired exponential sums using the ``rational'' sums $S(M)$ and summation by parts. Observe
\[ \aligned
\left|\sum_{n=1}^N e(n\theta + n^2\alpha) \right|
&\leq N^{1-1/16} + \left|\sum_{N^{1-1/16}\leq n\leq N} e(n\theta + n^2\alpha)  \right|\\
&=  N^{1-1/16} + \left|\sum_{N^{1-1/16}\leq n\leq N} e(n^2(\alpha-p/q)) [S(n)-S(n-1)]  \right|
\endaligned \]
Since the discrete derivative of $ e(n^2\gamma)$ satisfies
\[
| e((n+1)^2\gamma) -  e(n^2\gamma)| =|e((2n+1)\gamma) -1|\lesssim n|\gamma|,
\]
summation by parts and (\ref{eq:S(M)}) implies
\[\aligned
\left|\sum_{n=1}^N e(n\theta + n^2\alpha) \right|
& \lesssim N^{1-1/16} + |S(N)| + |S([N^{15/16}]+1)| + \sum_{N^{1-1/16}\leq n\leq N} n |\alpha-p/q| |S(n)|\\
&\leq  N^{1-1/16} + |S(N)| + |S([N^{15/16}]+1)| + \sum_{N^{1-1/16}\leq n\leq N}  |S(n)| N^{1/16-1}\\
& \lesssim_\veps N^{1-1/16} + N  \frac{N^{1+\veps-3/8}}{cN^{1-1/16}} \\
&= N^{1-1/16} + c^{-1} N^{1+\veps-3/16} \\
&\lesssim c^{-1}  N^{1-1/16},
\endaligned\]
since by assumption $\veps \leq 1/32$. We used that
\[ n|\alpha-p/q| \leq \frac{N}{qN^{2-1/16}} < N^{1/16 -1} \]
in passing to the second line.

Thus we have
\[
\sup_{\alpha\in\RR}\left|\aveN e(n\theta + n^2\alpha) \right|  \lesssim_\veps \max\left\{ \frac{1}{N^{1/32-\veps}}, \frac{1}{cN^{1/16}}\right\},
\]
completing the proof.
\end{proof}

Recall that the \emph{upper Minkowski (or box) dimension} of a set $E\subset \RR$ is given by
$$
\overline{\dim}_\text{box}(E):=\limsup_{r\to\infty}\frac{N(\veps)}{\log (1/\veps)},
$$
where $N(\veps)$ is the minimal number of intervals with length $\veps$ needed to cover $E$. The upper Minkowski dimension of a set is always bigger  than or equal to its Hausdorff dimension. It is well-known that one can replace $N(\veps)$ in the above definition by the so-called  \emph{$\veps$-metric entropy number}, i.e., the cardinality of the largest $\veps$-net in $E$, where an  \emph{$\veps$-net} is a set with distances between any two different elements being larger than or equal to $\veps$. See Tao \cite[Section 1.15]{tao-epsilon} for basic properties of the (upper) Minkowski dimension.

%    -----------------         WW   theorem      -----------------

We are ready for our uniform Wiener-Wintner type result for subsets of  $c$-badly approximable numbers with small Minkowski dimension. We restate our result below for the reader's convenience:

\begin{theorem}\label{thm:unif-WW-small-dim}
Let $0 < c <1$ and $E = E_c \subset [0,1]$ be a set of $c$-badly approximable numbers with upper Minkowski dimension strictly less than $1/16$.

For every $f \in L^2(X)$ there exists a subset $X_f \subset X$ of full measure so that the averages
\[ \left\{ \frac{1}{N} \sum_{n \leq N} e(n\theta) T^{n^2} f(x) \right\} \]
converge \emph{uniformly} (in $\theta \in E_c$) to zero for all $x \in X_f$, where $X_f$ is independent of $\theta \in E_c$.
\end{theorem}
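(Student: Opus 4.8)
\medskip

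\noindent\emph{Proof plan.}
The plan is to combine the quantitative Weyl-sum bound of Proposition~\ref{prop:weyl-badly-appr} with the spectral theorem to get a power-saving $L^2$-estimate for each fixed frequency $\theta$, to upgrade this to a power-saving $L^2$-estimate for the supremum over $\theta\in E_c$ by a metric-entropy (discretization) argument exploiting the dimension hypothesis, and finally to deduce the a.e.\ statement by a Borel--Cantelli argument along a suitably chosen sampling sequence, together with a telescoping estimate to fill in between the sampled scales. (No maximal inequality is needed for convergence to zero.)

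For $\theta\in[0,1]$ and $N\in\NN$ write $A_N^\theta f(x):=\aveN e(n\theta)\,T^{n^2}f(x)$. First I would invoke the spectral theorem for the unitary Koopman operator $T$: if $\sigma_f$ denotes the spectral measure of $f\in L^2(X)$, a positive measure on $\TT$ of total mass $\|f\|_{L^2}^2$, then
\[
\|A_N^\theta f\|_{L^2(X)}^2=\int_{\TT}\Bigl|\aveN e(n\theta+n^2\beta)\Bigr|^2\,d\sigma_f(\beta).
\]
When $\theta$ is $c$-badly approximable, Proposition~\ref{prop:weyl-badly-appr} bounds the integrand by $\lesssim_{\veps}(cN^{1/32-\veps})^{-2}$ \emph{uniformly in} $\beta\in\TT$, whence $\|A_N^\theta f\|_{L^2(X)}\lesssim_{\veps}(cN^{1/32-\veps})^{-1}\|f\|_{L^2(X)}$ for every $\theta\in E_c$; this is the only genuinely quantitative input. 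Next I would fix $s$ with $\overline{\dim}_{\mathrm{box}}(E_c)<s<1/16$, then $\veps>0$ small enough that $\tfrac{1}{16}-2\veps>s$, and then $\beta_0\in\bigl(1,(\tfrac{1}{16}-2\veps)/s\bigr)$ (nonempty precisely because $s<1/16$). For each large $N$ pick a maximal $N^{-\beta_0}$-separated set $\Theta_N\subset E_c$; by the definition of the upper Minkowski dimension (and comparability of metric entropy with the covering number, as recalled above) $|\Theta_N|\lesssim_s N^{\beta_0 s}$, while every $\theta\in E_c$ lies within $N^{-\beta_0}$ of some $\theta'\in\Theta_N$. Using $|e(n\theta)-e(n\theta')|\le 2\pi n|\theta-\theta'|$ and $|T^{n^2}f|=T^{n^2}|f|$ one gets, for a.e.\ $x$ (measurability of the supremum is clear since $A_N^\theta f(x)$ is continuous in $\theta$ and $E_c$ is separable),
\[
\sup_{\theta\in E_c}|A_N^\theta f(x)|\le\max_{\theta'\in\Theta_N}|A_N^{\theta'}f(x)|+2\pi N^{1-\beta_0}\aveN T^{n^2}|f|(x).
\]
Since the last average has $L^2$-norm at most $\|f\|_{L^2}$ by the triangle inequality, and $\|\max_{\theta'\in\Theta_N}|A_N^{\theta'}f|\|_{L^2}^2\le\sum_{\theta'\in\Theta_N}\|A_N^{\theta'}f\|_{L^2}^2$, combining with the per-frequency bound yields
\[
\Bigl\|\sup_{\theta\in E_c}|A_N^\theta f|\Bigr\|_{L^2(X)}\lesssim_{\veps,s,c}\bigl(N^{\beta_0 s/2-1/32+\veps}+N^{1-\beta_0}\bigr)\|f\|_{L^2(X)}\le N^{-\rho}\,C_f,
\]
where $\rho:=\min\bigl\{\tfrac{1}{32}-\veps-\tfrac{\beta_0 s}{2},\,\beta_0-1\bigr\}>0$ by the choice of parameters.

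To pass to almost-everywhere convergence I would fix $\tau\in(0,2\rho)$ and define a sampling sequence $N_1<N_2<\cdots$ by $N_{k+1}:=N_k+\lfloor N_k^{1-\tau}\rfloor$ (with $N_1$ large), so that $N_k\gtrsim(\tau k)^{1/\tau}$ and hence $\sum_k N_k^{-2\rho}<\infty$ and $\sum_k N_k^{-2\tau}<\infty$. By the displayed estimate and Chebyshev, $\sum_k\bigl\|\sup_{\theta\in E_c}|A_{N_k}^\theta f|\bigr\|_{L^2}^2<\infty$, hence $\sup_{\theta\in E_c}|A_{N_k}^\theta f(x)|\to0$ for a.e.\ $x$. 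Likewise $B_k(x):=\tfrac{1}{N_k}\sum_{N_k<n\le N_{k+1}}T^{n^2}|f|(x)$ satisfies $\|B_k\|_{L^2}\le\tfrac{N_{k+1}-N_k}{N_k}\|f\|_{L^2}\le N_k^{-\tau}\|f\|_{L^2}$, so $B_k(x)\to0$ a.e. For $N_k\le N<N_{k+1}$, splitting $A_N^\theta f$ at $N_k$ and bounding the tail $\tfrac1N\sum_{N_k<n\le N}e(n\theta)T^{n^2}f$ trivially in modulus gives $\sup_{\theta\in E_c}|A_N^\theta f(x)|\le\sup_{\theta\in E_c}|A_{N_k}^\theta f(x)|+B_k(x)$; letting $N\to\infty$ (hence $k\to\infty$) shows $\sup_{\theta\in E_c}|A_N^\theta f(x)|\to0$ on the full-measure set $X_f$ on which both Borel--Cantelli conclusions hold, and $X_f$ is manifestly independent of $\theta$. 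This is the assertion.

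The main obstacle is the parameter balancing in the discretization step: the net $\Theta_N$ must be fine enough ($\beta_0>1$) that the linear-in-$\theta$ discretization error $N^{1-\beta_0}\aveN T^{n^2}|f|$ is negligible in $L^2$, yet coarse enough that its cardinality $N^{\beta_0 s}$ is beaten by the square of the Weyl gain $N^{-1/16}$; these two requirements are simultaneously achievable exactly when $s<1/16$, which is precisely where the Minkowski-dimension hypothesis on $E_c$ enters. A secondary, routine point is calibrating the sampling sequence $(N_k)$ so that it is at once sparse enough for $\sum_k N_k^{-2\rho}<\infty$ and dense enough for the telescoping remainder $B_k$ to vanish almost everywhere.
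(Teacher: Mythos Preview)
Your argument is correct and shares its core with the paper's proof: both combine the spectral theorem with Proposition~\ref{prop:weyl-badly-appr} to get a per-frequency $L^2$ power saving, then upgrade to a uniform-in-$\theta$ power saving via a metric-entropy discretization at scale $N^{-\beta_0}$ with $\beta_0>1$ (the paper writes this via Bernstein's inequality, you via $|e(n\theta)-e(n\theta')|\le 2\pi n|\theta-\theta'|$; these are the same estimate), and then run Borel--Cantelli. The genuine difference is in how the a.e.\ conclusion is reached for general $f\in L^2$. The paper first restricts to simple (hence bounded) $f$, invokes the Rosenblatt--Wierdl lemma to pass to a lacunary subsequence $\lfloor\rho^k\rfloor$, and then extends to all of $L^2$ by appealing to Bourgain's $L^2$-maximal inequality for the square averages. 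You instead work directly with $f\in L^2$, choose a sub-lacunary sampling sequence $N_{k+1}=N_k+\lfloor N_k^{1-\tau}\rfloor$ calibrated so that \emph{both} $\sum_k N_k^{-2\rho}$ and $\sum_k N_k^{-2\tau}$ converge, and handle the intermediate scales by a second Borel--Cantelli applied to the telescoping remainders $B_k=\tfrac{1}{N_k}\sum_{N_k<n\le N_{k+1}}T^{n^2}|f|$. This buys you a strictly more elementary proof: Bourgain's maximal inequality is a substantial input, and your route dispenses with it entirely (as you note, convergence \emph{to zero} needs no maximal function). The paper's route, on the other hand, is slightly more modular---the reduction to bounded functions and lacunary scales is a standard package---and would adapt more readily if one only had a qualitative maximal inequality in place of the quantitative power saving.
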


\begin{proof}
We first prove the result for $f$ being a simple function, i.e., a finite linear combination of characteristic functions of measurable sets. By the boundedness of $f$ and Rosenblatt, Wierdl \cite[Lemma 1.5]{RW}, it is enough to prove that for any lacunary constant $\rho > 1$
\[
 \lim_{N \to \infty, \ N= \lfloor \rho^k \rfloor} \sup_{\theta\in E} \left|\aveN e(n\theta) T^{n^2}f(x)\right|=0
\]
holds for almost every $x\in X$. Note that for a fixed $N$, the function defined by
\[
g_{N}(x):=\sup_{\theta\in E} \left|\aveN e(n\theta) T^{n^2}f(x)\right|
\]
is again a simple function and hence measurable. By the Borel-Cantelli lemma, it is enough to show that
\[
\sum_{N= \lfloor \rho^k \rfloor} \left\| \sup_{\theta\in E} \left|\aveN e(n\theta) T^{n^2}f(x)\right| \right\|_{L^2(X)} < \infty;
\]
this will be accomplished by showing that, provided
\[
\overline{\dim}_{\text{box}}(E) < 1/16,
\]
we have
\begin{equation}\label{eq:norm-of-sup}
\left \| \sup_{\theta\in E} \left|\aveN e(n\theta) T^{n^2}f(x)\right| \right\|_{L^2(X)} = O(N^{-\nu})
\end{equation}
for some $\nu = \nu( \overline{\dim}_{\text{box}}(E) ) > 0$.

Assume without loss of generality that $\|f\|_\infty\leq 1$. Note first that for a fixed $\theta\in E$,  the spectral theorem implies
\[\aligned
\left\| \aveN e(n\theta) T^{n^2}f(x) \right\|_{L^2(X)}^2
&= \int_{[0,1]} \left| \aveN e(n\theta + n^2\alpha)  \right| ^2\, d\mu_f(\alpha)\\
&\leq \sup_{\alpha \in [0,1]} \left|\aveN e(n\theta + n^2 \alpha) \right|^2
\endaligned \]
for the corresponding spectral measure $\mu_f$ on $[0,1]$. Thus by Proposition \ref{prop:weyl-badly-appr} we have
\begin{equation}\label{eq:norm-fixed-theta}
\left\| \aveN e(n\theta) T^{n^2}f(x) \right\|_{L^2(X)}   \lesssim_\veps \frac{1}{c N^{1/32-\veps}} \quad \text{for every } \theta\in E.
\end{equation}

To show (\ref{eq:norm-of-sup}), suppose $\gamma < 1$ is such that
\[
\overline{\dim}_{\text{box}}(E) = \gamma/16 < 1/16,
\]
and choose $\beta > 1$ but so near to it that $\gamma \beta =: \kappa < 1$.
Now, let $\Delta_N$ be a maximal $\frac{1}{10N^{\beta}}$-net in $E$, i.e., a set of maximal cardinality of points in $E$ with distance between any two distinct points being larger than $\frac{1}{10N^{\beta}}$.

Since $\overline{\dim}_{\text{box}}(E) = \gamma/16$,  we remark that for all $N$ sufficiently large
\[ \log |\Delta_N| \leq \overline{\dim}_{\text{box}}(E) \cdot \log (N^{\beta}) =  \log (N^{\gamma\beta/16}),
\]
or
\begin{equation}\label{eq:Delta}
|\Delta_N| \leq  N^{\kappa /16}.
 \end{equation}

Now, take $x\in X$ such that $|T^{n^2}f(x)|\leq \|f\|_\infty\leq  1$ holds for every $n\in \N$; note that the set of such $x$ has full $\mu$-measure. The polynomial $p_N$ defined by
\[
p_N(\theta) := \aveN e(n\theta) T^{n^2}f(x)
\]
satisfies $|p_N(\theta)| \leq 1$ and by inspection (or Bernstein's polynomial inequality \cite{Be})
\[
\|p_N'\|_\infty \leq  2\pi N.
\]

Take now $\theta\in E$ and denote by $\tau$ the nearest element to $\theta$ from $\Delta_N$. By the maximality of $\Delta_N$ we have
\[
|\theta-\tau|\leq \frac{1}{10N^{\beta}}.
\]

By the mean-value theorem we may therefore estimate
\[ \aligned
|p_N(\theta)| &\leq |p_N(\tau)| + |\theta - \tau| 2\pi N \\
&\leq \max_{\theta' \in \Delta_N} |p_N(\theta')| + N^{1-\beta} \\
%&\leq \left( \sum_{\theta' \in \Delta_N} |p_N(\theta')|^2 \right)^{1/2} + N^{1-\beta},
&\leq \left( \sum_{\theta' \in \Delta_N} |p_N(\theta')|^2 \right)^{1/2}  + N^{1-\beta},
\endaligned\]
replacing the $l^\infty$-norm with the larger $l^2$-norm.

We may consequently estimate using (\ref{eq:norm-fixed-theta}) and (\ref{eq:Delta})
\[ \aligned
%\sup_{\theta \in E} \left| \aveN e(n\theta +\alpha n^2) \right|
%&\leq \left( \sum_{\theta' \in \Delta_N} \left| \aveN e(n\theta +\alpha n^2) \right|^2 \right)^{1/2} + N^{1-\beta} \\
%&\leq | \Delta_N |^{1/2} \cdot N^{-1/64} + N^{1-\beta} \\
%&\leq N^{\kappa/64} \cdot N^{-1/64} + N^{1-\beta} \\
%&\leq N^{\frac{\kappa-1}{64}} + N^{1 - \beta} \\
%&\lesssim N^{-\nu},
\left\| \sup_{\theta \in E} \left| \aveN e(n\theta) T^{n^2}f(x)\right| \right\|_{L^2(X)}
&\leq
\left\| \left( \sum_{\theta' \in \Delta_N} |p_N(\theta')|^2 \right)^{1/2} \right\|_{L^2(X)} + N^{1-\beta} \\
&= \left( \sum_{\theta' \in \Delta_N} \|p_N(\theta')\|_{L^2(X)}^2 \right)^{1/2} + N^{1-\beta} \\
&\lesssim_\veps c^{-1}| \Delta_N |^{1/2} \cdot N^{-1/32+\veps} + N^{1-\beta} \\
&\leq c^{-1}N^{\kappa /32} \cdot N^{-1/32+\veps} + N^{1-\beta} \\
&\leq c^{-1}N^{\frac{\kappa-1}{32} +\veps} + N^{1- \beta}. \\
%&\lesssim N^{-\nu},
\endaligned\]
%for some $\nu = \nu( \overline{\dim}_{\text{box}}(E) ) > 0$.
Choosing $\veps<\frac{1-\kappa}{32}$ finishes the proof of (\ref{eq:norm-of-sup}).

This concludes the argument in the case where $f$ is a simple function.

% --------------       from      here     ok      -------------------------

To extend the result to all of $L^2(X)$, we observe that the maximal function satisfies
\[
\sup_{\theta \in E}  \left|\aveN e(n\theta) T^{n^2} f(x)\right| \leq \M_{N,\text{sq}} f := \aveN T^{n^2} |f|(x),
\]
where the maximal function for squares on the right hand side is $L^2$-bounded by the celebrated result of Bourgain \cite{B0}.
%The result follows by Banach's principle.

Take now $f \in L^2(X)$ is arbitrary, and $g$ a simple function. 
For any $\epsilon > 0$, we have the containment
\[ \aligned
& \left\{ x : \limsup_N \ \sup_{\theta \in E} \left| \aveN e(n\theta) T^{n^2} f(x) \right| > \epsilon \right\} \\
& \qquad \subset
\left\{ x: \limsup_N \ \sup_{\theta \in E} \left| \aveN e(n\theta) T^{n^2} g (x) \right| > \epsilon/2 \right\} \cup
\left\{ x : \limsup_N \M_{N,\text{sq}} (f- g) (x)  > \epsilon/2 \right\} \\
& \qquad \subset \left\{ x : \sup_N \M_{N, \text{sq}} (f-g)(x)  > \epsilon/2 \right\}
\endaligned \]
by the previous considerations for simple functions. 
We thus have by Bourgain's maximal inequality the upper estimate
\[ \aligned
&\mu \left( \left\{ x : \limsup_N \ \sup_{\theta \in E} \left|\aveN e(n\theta) T^{n^2} f(x) \right| > \epsilon \right\} \right) \\
&\qquad \leq \mu \left( \left\{ x : \sup_N \M_{N,\text{sq}} (f-g)(x)  > \epsilon/2 \right\} \right) \\
&\qquad \leq \frac{4}{\epsilon^2} \left\|\sup_N \M_{N,\text{sq}} (f-g) \right\|_{L^2(X)}^2  \\
&\qquad \lesssim \frac{1}{\epsilon^2} \|f-g\|_{L^2(X)}^2. %= o_n(1).
\endaligned\]
Since the last quantity can be made as small as we wish (independent of $\epsilon$) the result follows.
\end{proof}

\begin{ex}[Sets of badly approximable numbers with small Minkowski dimension]
Let $A=\{a_1,\ldots,a_k\}\subset \N$ and denote by $E_A$ the set of all $\theta\in [0,1]$ which coefficients in the continued fraction expansion all belong to $A$. By Jenkinson \cite{jenkinson}, the set $\{\dim_H(E_A)\}$ of Hausdorff's dimensions of the sets $E_A$ is dense in $[0,1/2]$ (in fact, it is even dense in [0,1] as showed by  Kesseb\"ohmer, Zhu \cite{texan-conj}). Moreover, if $1\notin A$, then the Hausdorff and the Minkowski dimension of $E_A$ concide, see Falconer \cite[Theorem 23]{falconer}. Therefore, infinitely many sets $E_A$ satisfy the condition in Theorem \ref{thm:unif-WW-small-dim}.
\end{ex}
%
%\begin{remark}
%The exponent $1/32$ in Lemma \ref{prop:weyl-badly-appr} and hence in Theorem \ref{thm:unif-WW-small-dim} is for sure not optimal.
%\end{remark}

%%%%%%%%%%%%%%%%%%%%%%%%%%%%%%%%%%
%                                %
%        Polynomial means        %
%                                %
%%%%%%%%%%%%%%%%%%%%%%%%%%%%%%%%%%

\section{Pointwise Convergence of the Twisted Polynomial Means}

In this section we consider the behavior of the twisted means corresponding to more general polynomial shifts. We also relax our \emph{badly approximable} hypothesis and now allow $\theta$ to be arbitrary. Although establishing a Wiener-Wintner type theorem for arbitrary $\theta$ seems very difficult under these general assumptions, we are able to establish pointwise convergence for such means. Specifically, we prove the following

\begin{theorem}\label{main}
For any measure-preserving system, $(X,\mu,\tau)$, any $\theta \in [0,1]$, and any polynomial $P(n)$ with integer coefficients, the twisted means
\[ M_t^\theta f(x):= \frac{1}{t}\sum_{n \leq N} e(n \theta) \tau^{P(n)} f(x), \]
converge $\mu$-a.e.\ for any $f \in L^p(X), \ p >1$,
\end{theorem}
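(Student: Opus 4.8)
The strategy is to reduce pointwise convergence of the twisted averages $M_N^\theta f(x) = \frac{1}{N}\sum_{n\leq N} e(n\theta)\tau^{P(n)}f(x)$ to Bourgain's (untwisted) polynomial ergodic theorem by absorbing the linear phase $e(n\theta)$ into a one-parameter family of auxiliary systems, working entirely over $\RR$ (or $\ZZ$) rather than via a $\ZZ^2$-action. By Bourgain's $L^p$-maximal inequality for polynomial averages \cite[\S 7]{B1}, the set of $f\in L^p(X)$ for which a.e.\ convergence holds is norm-closed, so it suffices to prove convergence on a dense class, e.g.\ $f\in L^\infty(X)$, or even $f$ bounded with $\|f\|_\infty\leq 1$. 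For such $f$, fix a point $x$ lying in the full-measure set where Bourgain's theorem applies to all the relevant functions we will introduce (there will be countably many), and where the maximal function $\sup_N \frac{1}{N}\sum_{n\leq N}\tau^{P(n)}|f|(x)$ is finite.

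\textbf{Key steps.} First I would dispose of the case of rational $\theta = a/q$: then $e(n\theta)$ is periodic in $n$ with period $q$, so splitting the sum into residue classes $n\equiv r\pmod q$ and writing $n = qk+r$ turns each piece into $\frac{1}{N}\sum_{k} e(r a/q)\,\tau^{P(qk+r)}f(x)$, a genuine untwisted polynomial average (in $k$) for the polynomial $k\mapsto P(qk+r)$ and the transformation $\tau$ — or rather $\tau^{\text{(leading coeff)}}$ composed appropriately — to which Bourgain's theorem applies directly; summing the finitely many pieces gives convergence. Second, and this is the substantive case, I would handle irrational $\theta$. Here the idea is to use van der Corput / a comparison argument to reduce the oscillatory factor: form the tensor-product system $X\times \TT$ with the transformation $S(x,t) := (\tau x, t+\theta)$, which is measure-preserving, and note that $e(n\theta) = e(t_0 + n\theta)\cdot e(-t_0)$ relates the twisted average at $x$ to the untwisted polynomial average of a suitable function on $X\times\TT$ evaluated along the orbit — however, the powers $P(n)$ do not match $n$, so a cleaner route is: observe that the function $g(x,t) := f(x)e(t)$ on $X\times\TT$ is an eigenfunction-type object only when $P(n)=n$; for general $P$ one instead compares $M_N^\theta f$ with the average $\frac1N\sum_{n\leq N} e(n\theta)\,\tau^{P(n)}f$ via the spectral theorem on $L^2$, reducing (on the dense class, using the maximal inequality for the tail) to pointwise control. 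I expect the actual argument in the paper proceeds by a direct oscillation/variational estimate: split $[1,N]$ into blocks on which $e(n\theta)$ is nearly constant using the continued-fraction denominators $q_j$ of $\theta$, on each block apply Bourgain's theorem to the untwisted average, and control the number of blocks and the transition errors by summation by parts, exactly as in the proof of Theorem \ref{thm:unif-WW-small-dim} but now without needing badly-approximable hypotheses because we only need \emph{pointwise} (not uniform-in-$\theta$) convergence. Finally, transfer from the dense class to all $f\in L^p$ using Bourgain's maximal inequality and the standard $\limsup$-containment argument (as at the end of the proof of Theorem \ref{thm:unif-WW-small-dim}).

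\textbf{Main obstacle.} The delicate point is making the block decomposition argument for irrational $\theta$ genuinely work: one must show that the "twisting" error accumulated across block boundaries is summable (or tends to zero) along a full-measure set of $x$, uniformly enough that Bourgain's single-orbit convergence on each block can be patched together. Concretely, if $\theta$ has continued-fraction convergents $p_j/q_j$, then on intervals of length $\sim q_j$ the phase $e(n\theta)$ varies by $O(q_j\cdot\|q_{j-1}\theta\|) = O(q_j/q_j) = O(1)$ — so naive blocking does not make the phase nearly constant, and one must instead exploit cancellation in $\sum e(n\theta)$ over such blocks together with the boundedness of $\tau^{P(n)}f(x)$, i.e.\ run a two-scale argument (coarse scale for cancellation, fine scale for Bourgain). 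Managing the interaction of these two scales with the polynomial sampling $P(n)$ — in particular ensuring the relevant intermediate averages still fall under Bourgain's theorem and maximal inequality — is where the real work lies; everything else (the rational case, the passage to the dense class, the final transference) is routine given the tools already assembled in the excerpt.
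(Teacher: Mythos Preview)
Your proposal has a genuine gap at the key step. The block-decomposition / summation-by-parts approach you outline for irrational $\theta$ does not go through as sketched, and the obstacle you yourself flag is real and not resolved by your ``two-scale'' suggestion. Bourgain's untwisted polynomial ergodic theorem gives only qualitative convergence with no effective rate, so when you split $[1,N]$ into many blocks and invoke it block-by-block, the accumulated $o(1)$ errors (one per block) need not sum to $o(N)$. The alternative you propose --- exploit cancellation in $\sum_{n\in I} e(n\theta)$ and pair against $\tau^{P(n)}f(x)$ via Abel summation --- also fails: the partial sums $\sum e(n\theta)$ are bounded by $O(1/\|\theta\|)$, but the differences $\tau^{P(n+1)}f(x)-\tau^{P(n)}f(x)$ have no usable smallness or structure (the gaps $P(n+1)-P(n)$ grow polynomially), so the boundary terms do not decay. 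Your rational-$\theta$ reduction and the maximal-inequality transference to a dense class are fine; it is the irrational core that is missing.

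The paper takes an entirely different, frequency-side route and never reduces to the untwisted theorem. After the same reduction to $f\in L^\infty$ and to lacunary $N\in I_\rho$, it proves a long-variational inequality $\|\V^r(M_N^\theta f: N\in I_\rho)\|_{L^2}\lesssim_{r,\rho,\theta,P}\|f\|_{L^2}$ for $r>2$, transfers via Calder\'on to $\ell^2(\Z)$, and analyses the twisted multipliers $\widehat{K_N}(\alpha)=\frac{1}{N}\sum_{n\leq N}e(P(n)\alpha-n\theta)$ directly by the circle method. The central new device is the notion of \emph{$N$-$\theta$ rational approximates}: reduced fractions $x_N/y_N$ with $y_N\leq m_d N^\delta$ and $|\theta-x_N/y_N|\leq 2N^{\delta-1}$, which locate exactly where major arcs for the twisted Weyl sum can occur and obey a sparsity condition $N^{1-2\delta}\lesssim y_{N_j}$ on successive distinct approximates. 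The multiplier is then decomposed into a piece near the origin, pieces near the remaining major-arc rationals (organised dyadically in denominator size, with at most one relevant scale $l(t)$ per dyadic level), and a power-saving minor-arc error; the variation of each piece is bounded by combining Gauss-sum decay $|S_N^j(a/b)|\lesssim (b_N^j)^{-\nu}$, pseudo-projective oscillatory-integral estimates for $V_N(\beta)=\int_0^1 e(N^d m_d t^d\beta+Nt\gamma_N)\,dt$, and Bourgain's square-function / variational lemmas. The parameter $\theta$ enters only through the arithmetic of its approximates, not through any block decomposition in the time variable.
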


\subsection{Strategy}
Our strategy is as follows:

As previously remarked, using the majorization
\[ \sup_t | M_t^\theta f| \leq \M_{P} |f| := \sup_t \frac{1}{t}\sum_{n \leq t} \tau^{P(n)} |f|(x),\]
we see that the set of functions in $L^p, \ p>1$ for which $\mu$-a.e. convergence holds is closed, since Bourgain's square polynomial maximal function $\M_{P}$ is bounded on $L^p$ \cite{B1}.

Consequently, in proving convergence we may work exclusively with bounded $f \in L^\infty$.

For such functions, it suffices to prove that for any $\rho > 1$, the means
\[ \{ M_t^\theta f  : t \in \lfloor \rho^{\N} \rfloor \} =: \{ M_t^\theta f : t \in I_\rho \} \]
converge $\mu$-a.e.; we establish this result through a (long) \emph{variational estimate} on the $\{M_t^\theta f\}$:

\begin{definition}
For $0 < r <\infty$, we define the $r$-variation of the means $\{M_t^\theta f\}$
\[ \V^r( M_t^\theta f )(x):= \sup_{(t_k) \text{ increasing}} \left( \sum_k |M_{t_k}^\theta f - M_{t_{k+1}}^\theta f |^r \right)^{1/r}(x). \]
(The endpoint $\V^\infty (M_{t}^\theta f)(x) := \sup_{t,s} |M_t^\theta f - M_s^\theta f|(x)$ is comparable to the maximal function $\sup_t |M_t^\theta f|(x)$, and so is typically not introduced.)
\end{definition}

By the nesting of little $l^p$ spaces, we see that the variation operators grow more sensitive to oscillation as $r$ decreases. (We shall restrict our attention to the range $2<r<\infty$.)
This sensitivity is reflected in the fact that
although having bounded $r$-variation, $r<\infty$, is enough to imply pointwise convergence,
there are collections of functions which converge, but which have unbounded $r$ variation for \emph{any} $r< \infty$. (e.g.\ $\{(-1)^i\frac{1}{\log i+1} \}$)

To prove that the $\V^r(M_t^\theta f) < \infty$ converges almost everywhere, we will show that they are \emph{bounded} operators on $L^2(X)$. In particular, we will prove the following

\begin{proposition}
There exists an absolute $C_{r,\rho,\theta,P}$ so that
\[ \| \V^r (M_t^\theta f : t \in I_\rho) \|_{L^2(X)} \leq C_{r,\rho,\theta,P} \|f \|_{L^2(X)}\]
provided $r> 2$.
\end{proposition}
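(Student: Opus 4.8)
The plan is to prove the $L^2(X)$-bound for the $r$-variation by transferring to the integers via the Calderón transference principle and then invoking (a variational strengthening of) the harmonic-analytic estimates underlying Bourgain's polynomial ergodic theorem. First I would reduce to a Fourier-analytic statement on $\Z$: the variational norm $\|\V^r(M_t^\theta f : t \in I_\rho)\|_{L^2(X)}$ is controlled, by the standard transference argument (using the measure-preserving structure and a limiting argument over long orbit segments), by the operator norm on $\ell^2(\Z)$ of the discrete operator $g \mapsto \V^r\big(\tfrac1t\sum_{n\le t} e(n\theta)\, g(x - P(n)) : t \in I_\rho\big)$. The weight $e(n\theta)$ is an honest multiplier modulation, but the key point is that it does \emph{not} destroy the circle-method structure: writing the discrete average as convolution against the kernel $K_t(m) = \tfrac1t\sum_{n\le t} e(n\theta)\mathbf 1_{P(n)=m}$, its Fourier transform $\widehat{K_t}(\beta) = \tfrac1t\sum_{n\le t} e(n\theta + P(n)\beta)$ is exactly a twisted Weyl sum, the same object analyzed in Proposition \ref{prop:weyl-badly-appr} (though here $\theta$ is arbitrary).

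The second, main step is a Hardy–Littlewood circle-method decomposition of $\widehat{K_t}(\beta)$ into major and minor arcs, followed by a variational estimate on each piece. On the minor arcs, Weyl's inequality (Lemma \ref{weyl}) applied to $P(n)\beta + n\theta$ — absorbing the linear term $n\theta$ into the polynomial, which does not affect the top-degree coefficient that governs the Weyl bound — gives a power-saving decay $\lesssim t^{-\delta}$ uniform in $\beta$ on the minor arcs and in $\theta$; summing the resulting $\ell^2$ bounds over the lacunary set $I_\rho$ gives the minor-arc contribution to the variation directly via the crude bound $\V^r \le \V^2 \le (\sum_k \|\cdot\|^2)^{1/2}$. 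On the major arcs, one approximates $\widehat{K_t}(\beta)$ near a rational $a/q$ by $(\text{Gauss-sum factor}) \times (\text{a smooth oscillatory integral } \Phi_t)$, where the modulation $e(n\theta)$ perturbs the Gauss sum into a twisted Gauss sum $q^{-1}\sum_{r} e((ar^2+\dots)/q)$ of the same size, and the continuous factor $\Phi_t$ carries the $t$-dependence. The variation of the continuous piece is handled by a standard $\V^r$-estimate for continuous (truncated) oscillatory-integral averages — e.g.\ via Lépingle's inequality for the long-jump part together with a Sobolev-embedding bound $\V^r \lesssim \|\cdot\|_\infty^{1/2}\|\partial_t(\cdot)\|_\infty^{1/2}$-type argument for the short-variation part — and the arithmetic $q$-sums are summed using the square-function/orthogonality structure of the major arcs as in \cite{B1}.

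The main obstacle, as I see it, is obtaining the variational (rather than merely maximal) control on the major-arc pieces with constants that are uniform in $\theta$: one must show that introducing $e(n\theta)$ does not introduce new resonances between the modulation frequency $\theta$ and the polynomial phase $P(n)\beta$ that would cause the major arcs to proliferate or overlap. The saving grace is that $\theta$ is a \emph{fixed} linear frequency and $P$ has degree $\ge 1$, so for $\deg P \ge 2$ the relevant rational approximations are to the \emph{top} coefficient $\beta$ of $P(n)\beta$ and are essentially unaffected by $\theta$; the $\theta$-dependence is then confined to the Gauss-sum constants and a harmless translation of the continuous profile, and the constant $C_{r,\rho,\theta,P}$ may legitimately depend on $\theta$. (When $\deg P = 1$ the statement is essentially Birkhoff's theorem applied to the operator $e(\theta)\tau$, so there is nothing to do; we assume $\deg P \ge 2$.) Once the pointwise bound on $\V^r(M_t^\theta f)$ is in $L^2$, finiteness a.e.\ is immediate, hence pointwise convergence of $\{M_t^\theta f\}$ along $I_\rho$; combined with the closedness (from Bourgain's maximal inequality) and the reduction to $L^\infty$ and to lacunary $t$ discussed above, this yields Theorem \ref{main}.
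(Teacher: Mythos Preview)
Your transference step and the overall circle-method framework are correct and match the paper's approach. The minor-arc argument is also fine: Weyl's inequality indeed only sees the leading coefficient $m_d\alpha$, so absorbing $n\theta$ into the linear term is harmless there.

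The genuine gap is in your major-arc analysis. Your claim that ``the relevant rational approximations are to the \emph{top} coefficient $\beta$ of $P(n)\beta$ and are essentially unaffected by $\theta$'' is not correct. The multiplier $\widehat{K_N}(\alpha)=\frac1N\sum_n e(m_d\alpha\, n^d+\dots+m_2\alpha\, n^2+(m_1\alpha-\theta)n)$ is large only when the \emph{entire} coefficient vector $(m_d\alpha,\dots,m_2\alpha,\,m_1\alpha-\theta)$ lies in a major box, i.e.\ when every coordinate is close to a rational with small common denominator. Because the coefficients $m_i\alpha$ are linearly tied to each other, once $m_d\alpha$ is near $a/b$ with $b\le N^\delta$ the coefficients $m_i\alpha$ for $i\ge 2$ are automatically near rationals; but the linear coefficient $m_1\alpha-\theta$ is near a rational of denominator $\le N^\delta$ if and only if $\theta$ itself admits a rational approximation $x_N/y_N$ with $y_N\lesssim N^\delta$ and $|\theta-x_N/y_N|\lesssim N^{\delta-1}$. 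Thus for scales $N$ at which no such approximation to $\theta$ exists there are \emph{no} major boxes and $\widehat{K_N}$ is uniformly $O(N^{-\kappa})$; for scales at which such an approximation does exist, the continuous profile is not a mere translation but is genuinely twisted, taking the form $V_N(\beta)=\int_0^1 e(N^d m_d t^d\beta+Nt\gamma_N)\,dt$ with $\gamma_N=x_N/y_N-\theta$, and the Gauss sum acquires an extra $x_N/y_N$-dependence.

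What is missing from your plan, and what the paper supplies, is the bookkeeping device of ``$N$-$\theta$ rational approximates'' $\{x_{N_j}/y_{N_j}\}$ together with the crucial sparsity estimate $y_{N_{j+1}}\gtrsim N_j^{1-2\delta}$ (forced by distinctness of successive approximates). This sparsity is what makes $\sum_j y_{N_j}^{-2\nu}<\infty$ and allows the square-function and variational pieces, now indexed by $j$, to be summed. Without tracking the Diophantine interaction between $\theta$ and the scale $N$ in this way, your proposed major-arc estimate has no mechanism to control the accumulation of contributions across the infinitely many scales at which $\theta$ is well-approximable, and the argument does not close. Your dismissal of the $\theta$-dependence as ``confined to the Gauss-sum constants and a harmless translation of the continuous profile'' is precisely where the argument would fail for a Liouville-type $\theta$.
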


By the transference principle of Calder\'{o}n \cite{C}, this result will follow from the analogous one on the integer lattice:

\begin{proposition}\label{key}
Suppose $r> 2$. Then there exists an absolute constant $C_{r,\rho,\theta,P}$ so that for any $f \in l^2(\Z)$,
\[ \| \V^r(K_N^\theta*f)\|_{l^2(\Z)} \leq C_{r,\rho,\theta,P} \|f\|_{l^2(\Z)},\]
where
\[ K_N^\theta*f(x) := \frac{1}{N} \sum_{n \leq N} e(-n\theta)f(x - P(n))\]
is the discrete convolution operator.
\end{proposition}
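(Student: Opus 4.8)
The plan is to establish the $\ell^2(\Z)$ variational estimate in Proposition \ref{key} via a Fourier-analytic analysis of the multipliers $\widehat{K_N^\theta}(\xi) = \frac{1}{N}\sum_{n\le N} e(-n\theta - P(n)\xi)$, combined with the standard machinery for variation-norm estimates. First I would reduce to the lacunary-in-$N$ variation (which is comparable to the full short-variation plus long-variation decomposition) and observe that by the numerical $r$-variation inequality $\V^r \le \V^2$ for $r>2$, it suffices to control the $\V^2$-variation along $I_\rho$, or, following the Bourgain circle-method paradigm, to split $\widehat{K_N^\theta}$ into a ``main term'' supported near rationals $a/q$ (in the appropriate sense relative to $\theta$ and the degree-$d$ structure of $P$) plus an error term that is small in sup-norm. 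The key point, which distinguishes the $\theta \ne 0$ case from Bourgain's original polynomial theorem, is that the twist $e(-n\theta)$ shifts the relevant major arcs: the phase $n\theta + P(n)\xi$ has a large exponential sum only when $\xi$ is close to a rational and simultaneously the linear-term contribution is controlled, which by Weyl-type estimates (Lemma \ref{weyl}, and the Dirichlet-pigeonholing already used in Proposition \ref{prop:weyl-badly-appr}) confines the main term to a sparse union of arcs.

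The key steps, in order, would be: (1) Apply Weyl's inequality together with Dirichlet's principle to get the dichotomy: either $\xi$ lies in a major arc $\mathfrak{M}_{a/q}$ with $q \lesssim N^{\veps_0}$ (a ``minor'' denominator relative to $N^d$), in which case $|\widehat{K_N^\theta}(\xi)|$ enjoys an asymptotic expansion $\approx \frac{1}{q}\sum_{s=1}^{q} e(\cdots) \cdot \int_0^1 e(\beta\text{-phase})\,d\beta$ up to $O(N^{-\delta})$; or $\xi$ lies in a minor arc, in which case $|\widehat{K_N^\theta}(\xi)| \lesssim N^{-\delta}$ uniformly. (2) On the minor-arc piece, bound the variation trivially by summing the $\ell^2 \to \ell^2$ norms of each $K_N^\theta * f$ (which are $O(N^{-\delta})$) over the lacunary set $I_\rho$ — the geometric decay makes this summable, giving a bounded operator with no oscillation analysis needed. (3) On the major-arc piece, further decompose dyadically by the size of $q$; for each dyadic block one writes the multiplier as a ``sampled'' continuous multiplier (the integral factor $\int_0^1 e(t\theta' + \gamma(t)\xi')\,dt$-type object) times arithmetic Gauss-sum weights, and invokes the known continuous $r$-variational estimate for such averaging operators on $\RR$ (Lépingle / Jones–Seeger–Wright type, or the elementary one-dimensional argument the authors allude to), then transfers it to $\Z$ via a Magyar–Stein–Wainger-style lifting, picking up a power-saving in $q$ from the Gauss sums that sums over all dyadic blocks. (4) Combine the three pieces; the resulting constant depends on $r, \rho, \theta, P$ but not on $f$.

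The main obstacle I expect is step (3): executing the major-arc analysis entirely in one dimension, as the authors promise in their remark, rather than lifting to $\Z^2$ and transferring. One must carefully isolate how the linear twist $e(-n\theta)$ interacts with the quadratic (or higher) sampling — the ``correct'' rational approximation to track is not just an approximation to $\xi$ but a joint Diophantine condition coupling $\theta$ and $\xi$, so the arithmetic factor that appears is a mixed exponential sum $\frac{1}{q}\sum_{s \bmod q} e((s\theta + P(s)a)/q$-type$)$ whose cancellation (needed to sum over $q$) requires a Weyl/Gauss-sum bound that is uniform in $\theta$. Getting genuine power-saving decay in $q$ here, uniformly in the (arbitrary, possibly Liouville) $\theta$, while simultaneously keeping the continuous approximating multiplier's $r$-variation under control, is the delicate heart of the argument; everything else (the reduction to lacunary $N$, the closedness/maximal-function argument, Calderón transference) is routine.

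Still, I'd note that since we only need $L^2$ bounds — not $L^p$ — we can afford to be generous: the $\V^2$ majorant of $\V^r$ lets us avoid the more delicate oscillation estimates, and orthogonality on the Fourier side (Plancherel) reduces everything to pointwise-in-$\xi$ multiplier bounds plus a crude summation over scales, which is why the one-dimensional approach is viable at all in the $L^2$ setting.
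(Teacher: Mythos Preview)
Your overall architecture---circle method, major/minor arc dichotomy, dyadic decomposition in the denominator, continuous variational input, transference to $\RR$---matches the paper's. But the specific mechanism that closes the argument is missing, and your own identification of the ``main obstacle'' shows you have not yet found it.

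The resolution is \emph{not} to seek power-saving in $q$ uniformly in $\theta$; as you suspect, that fails for Liouville $\theta$. The constant is allowed to depend on $\theta$, and the paper exploits this by introducing, for each $N$, the \emph{$N$-$\theta$ rational approximate} $x_N/y_N$: a reduced fraction with $y_N \lesssim_P N^\delta$ and $|\theta - x_N/y_N| \lesssim N^{\delta-1}$. The major box for $\widehat{K_N^\theta}$ near a rational $a/b$ only exists when such an approximate exists, and the relevant Gauss sum is then $S_N^0(0/1) = \frac{1}{y_N}\sum_{r=1}^{y_N} e(-r\, x_N/y_N)$, bounded by $y_N^{-\nu}$ via Hua. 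The key structural fact is a \emph{sparsity} estimate: the distinct approximates $x_{N_j}/y_{N_j}$ satisfy $N^{1-2\delta} \lesssim_P y_{N_l}$ whenever $N < N_l$, so $\sum_j y_{N_j}^{-2\nu} < \infty$. This is what replaces ``summing over $q$'' in your step (3). Within a single regime $N_j \le N < N_{j+1}$ (where the approximate is fixed), the twist contributes only a scalar factor $\omega_N = \int_0^1 e(Nt\gamma_{N_j})\,dt$, and one checks directly that $\V^1(\omega_N : N_j \le N < N_{j+1}) \lesssim_\rho 1$; this is how a possibly Liouville $\theta$ is absorbed without any uniformity. A companion subdivision lemma shows that for each dyadic denominator level $2^t$ at most one regime $l(t)$ contributes, which is what makes the $R_N$ piece summable in $t$.

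A secondary but genuine error: your claim that ``the $\V^2$ majorant of $\V^r$ lets us avoid the more delicate oscillation estimates'' is backwards. It is true that $\V^r \le \V^2$ pointwise, but $\V^2$ is generically \emph{unbounded} on $L^2$ (L\'epingle's inequality, and Bourgain's Lemma~3.28 which the paper invokes, both require $r>2$). You cannot reduce to $\V^2$ and then use ``crude summation over scales''; a genuine $r>2$ variational input is needed after the multiplier has been factored as $\omega_N$ times a dilated cutoff.
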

\begin{remark}
For notational ease, we have chosen to work with the above convolution kernels; by replacing $\theta$ and $P$ with $-\theta$, $-P$, we prove an analogous result for
\[ \bar{K}_N^\theta*f(x) := \frac{1}{N} \sum_{n \leq N} e(n\theta)f(x + P(n)) \]
which may be transferred appropriately.
\end{remark}

% preliminaries

It is this proposition to which we now turn.

\subsection{Preliminaries}
Fix throughout $0 < \delta \ll 1$.

Let $\rho > 1$ be a temporarily fixed lacunarity constant, and set
\[ I = I_\rho:= \{ \lfloor \rho^k \rfloor : k \in \N \};\]
we will enumerate such elements using capital $N, M, K$, etc.

This is an $L^2$ problem, so we will use the Fourier transform. Our strategy will be to replace the twisted multipliers
\[ \aligned
\widehat{K_N}(\alpha) &:= \frac{1}{N} \sum_{n \leq N} e(P(n)\alpha - n \theta) \\
&= \frac{1}{N} \sum_{n \leq N} e( m_d \alpha \cdot n^d + \dots + m_d \alpha \cdot n^2 + (m_1 \alpha - \theta) \cdot n ) \endaligned \]
with increasingly tractable families of multipliers which remain $L^2$-close. \footnote{Our projection-based approach is based off of the excellent exposition of \cite{RW}.}

The arithmetic properties of $\theta$ figure centrally in this approximation, so to organize our approach we introduce \emph{$N$-$\theta$ rational approximates};
their significance is that the so-called $N$-major boxes for $\widehat{K_N}(\alpha)$ (introduced below) only appear ``arithmetically near'' such approximates.

\subsection{$N$-$\theta$ Rational Approximates}

Regarding $\theta \in [0,1]$ as given, for $N \in I_{\rho} = I$ we define the $N$-$\theta$ rational approximates $\{ \frac{x_N}{y_N} \} \subset [0,1]$ to be rational numbers (in reduced form) with
\begin{itemize}
\item $y_N \leq m_d N^\delta$;
\item $|\gamma_N(\theta)| = |\gamma_N| := |\frac{x_N}{y_N} - \theta| \leq 2 N^{\delta - 1}$.
\end{itemize}

Some remarks are in order:

For many $N$ there need not exist $N$-$\theta$ rational approximates, but -- for sufficiently large $N$ -- such $N$-$\theta$ approximates are unique if they exist:
if $x/y, \ p/q$ were distinct $N$-$\theta$ approximates, we would have
\[ \left( \frac{1}{m_dN^\delta} \right)^{2} \leq \frac{1}{yq} \leq \left| x/y - p/q \right| \leq
\left| x/y - \theta \right| + \left| \theta - p/q \right| \leq 4 N^{\delta - 1},\]
for the desired contradiction.

Many $N$ may share the same $N$-$\theta$ rational approximate; for example, if $\theta$ is very close to $0$, then $0/1$ will serve as an $N$-$\theta$ approximate many times over.

To make matters clearer, we enumerate the \emph{distinct} $N$-$\theta$ rational approximates $\{ \frac{x_{N_j}}{y_{N_j}} : j\}$ according to the size of pertaining $N_j$.
Note that if $\theta$ is rational, or more generally \emph{badly approximable}, there are only finitely many distinct $N$-$\theta$ rational approximates.

Moreover, distinct $N$-$\theta$ approximates are necessarily sparsely spaced.

Suppose $N< N_j$; then arguing as above we see
\[ \frac{1}{m_d N^\delta y_{N_j}} < |x_N/y_N - x_{N_j}/y_{N_j}| < 4N^{\delta-1},\]
and thus $N^{1-2\delta} \lesssim_P y_{N_j}$.

\subsection{Major Boxes}
For $a/b, p_{d-1}/q_{d-1}, \dots, p_2/q_2, x/y \in [0,1]$ with
\[
\lcm(b,q_{d-1},\dots,q_2,y) \leq N^\delta, \]
 we define the $N$-major box
\[ \aligned
&\MM_N(a/b, p_{d-1}/q_{d-1}, \dots, p_2/q_2, x/y) \\
& \qquad := \left\{ (\alpha_d,\alpha_{d-1},\dots,\alpha_2,\alpha_1): |\alpha_d - a/b| \leq N^{\delta - d}, \
|\alpha_i - p_i/q_i| \leq N^{\delta - i}, \ |\alpha_1 - x/y| \leq N^{\delta - 1} \right\}. \endaligned \]
By Bourgain's \cite[Lemma 5.6]{B1}, we know that if $(\alpha_d,\dots,\alpha_1)$ does not lie in some major box, there exists some (small) $\kappa > 0$ so that
\[ \left| \frac{1}{N} \sum_{n \leq N} e( \alpha_d n^d + \dots + \alpha_1 n) \right| \lesssim N^{-\kappa}.\]

In our present context, this means that
\[ \widehat{K_N}(\alpha) := \frac{1}{N} \sum_{n \leq N} e( m_d \alpha \cdot n^d + \dots + m_2 \alpha \cdot n^2 + (m_1 \alpha - \theta) \cdot n ) \]
is $O(N^{-\kappa})$ unless there exist some tuple
\[ (a/b, p_{d-1}/q_{d-1}, \dots, p_2/q_2, x/y ), \ \lcm(b, q_{d-1},\dots,q_2,y) \leq N^{\delta},\]
so
\[ \aligned
m_d \alpha &\equiv a/b + \OO(N^{\delta - d}) \mod 1, \\
m_i \alpha &\equiv p_i/q_i + \OO(N^{\delta - i}) \mod 1, \ 2 \leq i \leq d-1, \ \text{ and} \\
m_1 \alpha - \theta &\equiv x/y + \OO(N^{\delta - 1}) \mod 1. \endaligned \]
The first point means that there exists some $0\leq j \leq m_d -1$
\[ \alpha \equiv \frac{1}{m_d} (j + a/b) + \OO\left(\frac{1}{m_d}N^{\delta - d} \right) \mod 1\]
which in turn forces
\[ m_i \alpha \equiv \frac{m_i}{m_d} ( j + a/b) + \OO\left(\frac{m_i}{m_d} N^{\delta - d} \right) \mod 1\]
and thus, for $2 \leq i \leq d-1$,
\[ \frac{m_i}{m_d}(j+a/b) \equiv p_i/q_i + \OO(2 N^{\delta -1}) \mod 1\]
for $N$ sufficiently large.
Since the left side of the foregoing has a priori denominator $\leq m_d N^{\delta}$, while the right side has denominator $\leq N^{\delta}$, we in fact must have
\[\frac{m_i}{m_d}(j+a/b) \equiv p_i/q_i \mod 1. \]

Turning to the $i=1$ term: we have on the one hand
\[ m_1 \alpha \equiv \frac{m_1}{m_d}(j+a/b) + \OO\left(\frac{m_1}{m_d} N^{\delta - d} \right) \mod 1 \]
while on the other hand
\[ m_1 \alpha - \theta  \equiv x/y + \OO(N^{\delta - 1}) \mod 1.\]
This forces (for $N$ sufficiently large)
\[ \frac{m_1}{m_d}(j+a/b) \equiv x/y + \theta + \OO( 2N^{\delta -1 } ) \mod 1 \]
and thus
\[ \frac{m_1}{m_d}(j+a/b) - x/y \equiv \theta + \OO( 2N^{\delta -1 } ) \mod 1.\]
Since the left hand side of the above expression has denominator of size $\leq m_d N^{\delta}$, it must be $\equiv x_N/y_N \mod 1$, i.e.\ we must have
\[ \frac{m_1}{m_d}(j+a/b) - x/y \equiv x_N/y_N \mod 1. \]
This forces
\[ \frac{m_1}{m_d}(j+a/b) - x_N/y_N \equiv x/y \mod 1.\]
The upshot is that, beginning with the assumption that
\[ (m_d \alpha, \dots, m_2 \alpha, m_1 \alpha - \theta) \in \MM_N(a/b,p_{d-1}/q_{d-1},\dots,p_2/q_2,x/y) \]
for \emph{some} major box, with
\[ \alpha \equiv \frac{1}{m_d} (j + a/b) + \OO\left(\frac{1}{m_d}N^{\delta - d} \right) \mod 1,\]
we have deduced the relationship
\[ \aligned
 \frac{p_i}{q_i} &\equiv \frac{m_i}{m_d}(j+a/b) \mod 1 \\
\frac{x}{y} &\equiv \frac{m_1}{m_d}(j+a/b) - x_N/y_N \mod 1. \endaligned\]

\begin{remark}
If $\theta$ is badly approximable, so there are only finitely many $N$-major boxes, then for all $N$ large we see that the twisted polynomial means are $O(N^{-\kappa})$.
By arguing as in the previous section, uniform Wiener-Wintner theorems can be proven for the twisted polynomial means
for badly approximable $\theta$ which live in sets of upper Minkowski dimension $< 2\kappa$. We will not focus on attaining the best possible numerical constant $\kappa$ in our present context.
\end{remark}

In fact, a little more is true: for $\alpha \in \MM_N(a/b,p_{d-1}/q_{d-1},\dots,p_2/q_2,x/y)$ we knew a priori that for $2 \leq i \leq d$
\[ m_i \alpha \equiv p_i/q_i + \OO(N^{\delta - i}) \mod 1;\]
using the above determined structure, i.e.\ $\alpha \equiv \frac{1}{m_d}(j+a/b) + \OO\left(\frac{1}{m_d}N^{\delta - d}\right)$,
we in fact find that
\[ m_i \alpha \equiv \frac{m_i}{m_d}(j+ a/b) + \OO\left(\frac{m_i}{m_d} N^{\delta - d}\right) \mod 1.\]

We're now ready for a working characterization of our major boxes.

For $a/b \in \QQ \cap [0,1]$ in reduced form, define by equivalence $\mod 1$
\[ \aligned
\frac{ \widehat{a_i^j}}{ \widehat{b_i^j} } &:\equiv \frac{m_i}{m_d}(j+a/b) \ \text{ for } 2 \leq i \leq d-1 \\
\frac{ \widehat{a_{1,N}^j}}{ \widehat{b_{1,N}^j} } &: \equiv \frac{m_1}{m_d}(j+a/b) - x_N/y_N, \endaligned \]
and set
\[ b_N^j := \lcm (b, \widehat{b_{d-1}^j}, \dots, \widehat{b_2^j}, \widehat{ b_{1,N}^j } ).\]
Collect all $b_N^j \leq N^\delta$ in
\[ \aligned
A_N^0 &:= \{ a/b \neq 0/1 : b_N^0 \leq N^\delta \} \\
A_N^j &:= \{ a/b : b_N^j \leq N^\delta \}, \ 1 \leq j \leq m_{d-1}, \endaligned \]
(these sets are possibly empty) and set
\[ A_N := \bigcup_{j=0}^{m_d -1} A_N^j.\]

If we define
\[
\MM_N^j(a/b) := \left\{ \alpha : \alpha \equiv \frac{1}{m_d}(j + a/b ) + \OO \left(\frac{1}{m_d} N^{\delta - d} \right) \mod 1 \right\} \]
then we have shown that the union of the major boxes
\[ \bigcup_{ (a/b,p_{d-1}/q_{d-1},\dots,p_2/q_2,x/y) : \lcm (b,q_{d-1}, \dots, q_2,y) \leq N^{\delta} }\MM_N(a/b,p_{d-1}/q_{d-1},\dots,p_2/q_2,x/y) \]
is (simply) contained in
\[ \bigcup_{j=0}^{m_d -1 } \bigcup_{a/b \in A_N^j} \MM_N^j (a/b).\]

Before proceeding, we further decompose our sets $A_N^j, A_N$ according to the size of denominators of its elements. Specifically, for $2^t \leq N^\delta$, we define
\[ A_{N,t}^j := \{ a/b : b_N^j \approx 2^t\},\]
and $A_{N,t} := \bigcup_{j=0}^{m_d -1} A_{N,t}^j$. (Note that $|A_{N,t}| \lesssim_P 4^t$.)

We isolate the following simple subdivision lemma:

\begin{lemma}
For each $t$, there is at most one scale $l(t)$ so that if $A_{N,t}^j \neq \emptyset$ for any $0\leq j \leq m_d -1$, we necessarily have $N_{l(t)} \leq N < N_{l(t)+1}$.
\end{lemma}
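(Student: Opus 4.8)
The plan is to unpack the definition of the $N$-$\theta$ rational approximates together with the sparseness estimate already derived, and then observe that membership of $a/b$ in $A_{N,t}^j$ forces a two-sided constraint on the size of $N$. First I would record that, by definition, $a/b \in A_{N,t}^j$ means $b_N^j \approx 2^t$, where $b_N^j = \lcm(b,\widehat{b_{d-1}^j},\dots,\widehat{b_2^j},\widehat{b_{1,N}^j})$, and that the quantity $\widehat{b_{1,N}^j}$ is built from $x_N/y_N$, the (unique, for large $N$) $N$-$\theta$ rational approximate. The key point is that $b_N^j$ is divisible by $y_N$ up to bounded factors coming from $m_1/m_d$: precisely, since $\frac{\widehat{a_{1,N}^j}}{\widehat{b_{1,N}^j}} \equiv \frac{m_1}{m_d}(j+a/b) - x_N/y_N \bmod 1$, the denominator $\widehat{b_{1,N}^j}$ — hence $b_N^j$ — is at least $y_N / (m_1 m_d)$ and at most $m_1 m_d \cdot \lcm(b,y_N)$ times something bounded, so that in any case $b_N^j \approx_P y_N$ when $b$ itself is $\lesssim_P 2^t$.

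Next I would invoke the sparseness estimate proved in the previous subsection: if $N < N_j$ are two indices carrying \emph{distinct} $\theta$-rational approximates, then $N^{1-2\delta} \lesssim_P y_{N_j}$, and symmetrically $y_N \lesssim_P N^{1/(1-2\delta)}$ or more simply $y_N \le m_d N^\delta$ directly from the definition. Combining, if $a/b \in A_{N,t}^j$ then $2^t \approx b_N^j \approx_P y_N$, and $y_N$ in turn pins $N$ into a narrow window: on one hand $y_N \le m_d N^\delta$ gives $N \gtrsim_P 2^{t/\delta}$; on the other hand, the sparseness of distinct approximates shows that once $2^t \approx_P y_N$ is fixed, only $N$ lying below the \emph{next} distinct approximate's index $N_{l+1}$ — and above $N_l$ — can have $y_N$ of this size, because $y_N$ is eventually strictly increasing along the enumeration of distinct approximates. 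That is exactly the claimed scale $l(t)$: set $l(t)$ to be the unique index $l$ such that the distinct approximate $x_{N_l}/y_{N_l}$ has $y_{N_l} \approx_P 2^t$ (if no such $l$ exists, the conclusion $A_{N,t}^j = \emptyset$ for all $N$ holds vacuously and there is nothing to prove), and then every $N$ with $A_{N,t}^j \ne \emptyset$ must satisfy $N_{l(t)} \le N < N_{l(t)+1}$.

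The one subtlety — and the step I expect to be the main obstacle — is handling the contribution of $b$ itself to the lcm $b_N^j$, since a priori $b$ could be as large as $N^\delta$ while $y_N$ is small, which would decouple $b_N^j$ from $y_N$. The resolution is that $A_{N,t}^j$ restricts $b_N^j \approx 2^t$, so in particular $b \le b_N^j \lesssim 2^t$; thus $b$ cannot be large relative to $2^t$, and the bound $b_N^j \gtrsim_P y_N$ from the $i=1$ coordinate is what carries the argument, while $b_N^j \lesssim_P \lcm(b,y_N) \lesssim_P 2^t$ is automatic. One must also check that the finitely many small $N$ (where uniqueness of $N$-$\theta$ approximates may fail, or where the asymptotic comparisons have not kicked in) can be absorbed: for those we simply enlarge the implied constants or note there are only $O_{\delta,\rho,P}(1)$ of them, which does not affect the existence of a single scale $l(t)$ for all large $N$. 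Finally I would remark that the uniqueness of $l(t)$ is immediate from the strict monotonicity (up to bounded multiplicative error) of $j \mapsto y_{N_j}$ along distinct approximates, which is precisely the sparseness bound $N_{l}^{1-2\delta} \lesssim_P y_{N_{l+1}}$ rearranged.
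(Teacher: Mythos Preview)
Your proposal has the right ingredients but assembles them incorrectly. You correctly identify (in your ``subtlety'' paragraph) that the $i=1$ coordinate gives $y_N \lesssim_P b_N^j \approx 2^t$: writing $\tfrac{x_N}{y_N} = \tfrac{m_1}{m_d}(j+a/b) - \tfrac{\widehat{a_{1,N}^j}}{\widehat{b_{1,N}^j}}$ shows $y_N$ divides $m_d \cdot \lcm(b,\widehat{b_{1,N}^j}) \mid m_d b_N^j$, so $y_N \lesssim_P 2^t$. But your main argument needs the \emph{two-sided} bound $b_N^j \approx_P y_N$, and that is false: nothing prevents $b$ (hence $b_N^j$) from being much larger than $y_N$. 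Consequently your definition ``$l(t)$ is the unique $l$ with $y_{N_l} \approx_P 2^t$'' is ill-posed --- there may be no such $l$ even when some $A_{N,t}^j$ is nonempty, because $2^t$ reflects $b_N^j$, not $y_N$. Your vacuous-case claim therefore fails. Relatedly, the line ``$y_N \le m_d N^\delta$ gives $N \gtrsim_P 2^{t/\delta}$'' is a non sequitur: that inequality does not give a lower bound on $N$ unless you already know $y_N \gtrsim 2^t$, which you do not. (The correct source of $N \geq 2^{t/\delta}$ is simply $b_N^j \leq N^\delta$ from the definition of $A_N^j$.)

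The paper's proof avoids all of this by arguing directly by contradiction, using only the one-sided bound you already have. Suppose $A_{N,t}^j \neq \emptyset$ and $A_{M,t}^h \neq \emptyset$ with $N,M$ in distinct scale intervals, say $N_k \le N < N_{k+1} \le N_l \le M$. Then on the one hand $y_{N_l} = y_M \lesssim_P 2^t \leq N^\delta$ (the last inequality from $b_N^j \leq N^\delta$); on the other hand the sparseness estimate gives $N^{1-2\delta} \lesssim_P y_{N_l}$. For small $\delta$ this is a contradiction. Your outline can be repaired by replacing the attempt to pin down $y_N \approx 2^t$ with this contradiction step.
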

\begin{proof}
Seeking a contradiction, suppose there existed some $N_k < N_l$ so that
\[ N_k \leq N < N_{k+1} \leq N_l \leq M < N_{l+1},\]
and $2^t \leq N^\delta < M^\delta$, but
\[ A_N^j, \ A_M^h \neq \emptyset \]
for some $0 \leq j,h \leq m_d -1$.
This means we may find some $a/b \in A_N^j$ so that
\[ \widehat{b_{1,N}^j} \leq b_N^j \lesssim 2^t \leq N^\delta,\]
which forces $y_{N_k} = y_N \lesssim m_d 2^t$.

Similarly, we may find some $p/q \in A_M^h$ so that
\[ \widehat{q_{1,M}^h} \leq q_M^h \lesssim 2^t \leq N^\delta,\]
which forces $y_{N_l} = y_M \lesssim m_d 2^t$, which means we have
\[ y_{N_l} \lesssim m_d 2^t \leq m_d N^\delta.\]
But this contradicts the sparsity condition, $N^{1-2\delta} \lesssim_P y_{N_l}$, and completes the proof.
\end{proof}

\subsection{The Multiplier on Major Arcs}
We now proceed to describe the shape of the twisted multiplier
\[ \widehat{K_N}(\alpha) = \frac{1}{N} \sum_{n \leq N} e( m_d \alpha \cdot n^d + \dots + m_d \alpha \cdot n^2 + (m_1 \alpha - \theta) \cdot n )\]
on $\MM_N^j(a/b)$.

For $a/b \in A_N^j$ define the weight
\[ S_N^j(a/b):= \frac{1}{b_N^j} \sum_{r=1}^{b_N^j} e \left(r^d \frac{a}{b} + r^{d-1} \frac{ \widehat{ a_{d-1}^j} }{ \widehat{ b_{d-1}^j} } + \dots + r
\frac{ \widehat{ a_{1,N}^j} }{ \widehat{ b_{1,N}^j} } \right) ;\]
we remark that
\[ S_N^0(0/1) = \frac{1}{y_N} \sum_{r=1}^{y_N} e( -r x_N/y_N ). \]

We note that for any fixed $0 \ll \nu < 1/d$ we have
\[ |S_N^j(a/b)| \lesssim (b_N^j)^{-\nu} \]
by Hua's \cite[\S 7, Theorem 10.1]{HUA}.

Define further the oscillatory (twisted) ``pseudo-projection''
\[ V_N(\beta) := \int_0^1 e( N^d m_d t^d \beta + Nt\gamma_N) \ dt,\]
and observe that
\[ \left| V_N(\beta) - \int_0^1 e( Nt\gamma_N) \ dt \right| =: \left| V_N(\beta) - \omega_N \right| \lesssim N^d m_d |\beta| \]
by the mean-value theorem, while
\[ |V_N(\beta)| \lesssim \frac{1}{N m_d^{1/d} |\beta|^{1/d}} \]
by van der Corput's estimate on oscillatory integrals \cite[\S 8]{S}.

We have the following approximation lemma:

\begin{lemma}
On $\MM_N^j(a/b)$, if $a/b \in A_N^j$
\[ \widehat{K_N}(\alpha) = S_N^j(a/b) V_N \left( \alpha - \frac{1}{m_d}(j+a/b) \right) + O(N^{2\delta - 1}). \]
If $|\alpha| \leq \frac{1}{m_d} N^{\delta - d}$,
\[ \widehat{K_N}(\alpha) = S_N^0(0/1) V_N( \alpha ) + O(N^{2\delta - 1}).\]
\end{lemma}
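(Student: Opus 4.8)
The plan is to establish this by a direct Weyl-sum computation, carefully tracking the error terms coming from the widths of the major boxes. The starting point is the observation already recorded above: on $\MM_N^j(a/b)$ we may write $\alpha \equiv \frac{1}{m_d}(j+a/b) + \beta \pmod 1$ with $|\beta| \leq \frac{1}{m_d} N^{\delta-d}$, and the higher coefficients $m_i\alpha$ are then congruent to $\frac{m_i}{m_d}(j+a/b) + \OO(\frac{m_i}{m_d}N^{\delta-d}) \pmod 1$ for $2 \leq i \leq d$, while $m_1\alpha - \theta \equiv \frac{\widehat{a_{1,N}^j}}{\widehat{b_{1,N}^j}} + \OO(2N^{\delta-1}) \pmod 1$. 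So I would first substitute these into $\widehat{K_N}(\alpha) = \frac{1}{N}\sum_{n\leq N} e(m_d\alpha n^d + \dots + m_2\alpha n^2 + (m_1\alpha-\theta)n)$ and split each residue into its ``rational part'' $\frac{\widehat{a_i^j}}{\widehat{b_i^j}}$ (for $i \geq 2$, with $\widehat a_d^j/\widehat b_d^j := a/b$) plus a ``small real part'' of size $\OO(N^{\delta-i})$ for $2 \leq i \leq d$ and $\OO(N^{\delta-1})$ for $i=1$.

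Next I would carry out the standard rational-arc decomposition: write $n = b_N^j s + r$ with $1 \leq r \leq b_N^j$ and $s$ running over an interval of length $\approx N/b_N^j$. The rational part of the phase depends only on $r$ (since $b_N^j = \lcm$ of all the denominators), so it factors out as $\frac{1}{b_N^j}\sum_{r=1}^{b_N^j} e(r^d \tfrac ab + \dots + r\tfrac{\widehat a_{1,N}^j}{\widehat b_{1,N}^j}) = S_N^j(a/b)$ up to the usual boundary corrections. What remains is an inner average over $s$ of $e(P_\beta(b_N^j s + r))$ where $P_\beta$ is the polynomial with the small real coefficients; since each such coefficient is $\OO(N^{\delta - i})$ and $(b_N^j s + r)^i \leq (2N)^i$, replacing the sum over $s$ by the integral $\int_0^1$ and then rescaling produces $V_N(\beta)$ up to an error. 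The key point is that the ``small real part'' of the coefficient of $n^i$ for $2 \leq i \leq d-1$ contributes a phase of magnitude $\lesssim N^{\delta-i} \cdot N^i = N^\delta$, which is \emph{not} negligible pointwise, but its variation over a single step in $s$ (or over the integration variable) is $\lesssim N^{\delta - i} \cdot N^{i-1} = N^{\delta - 1}$, so summing/integrating against the oscillation only costs $O(N^{\delta - 1})$ per coefficient; similarly the top coefficient's non-$\frac1{m_d}(j+a/b)$ part beyond $\beta$ and the boundary terms from the incomplete residue classes are each $O(N^{\delta-1})$. Collecting the $d$-ish such contributions gives the claimed $O(N^{2\delta-1})$ (the extra $\delta$ absorbing the $\lcm \leq N^\delta$ losses and the number of terms).

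For the second assertion, when $|\alpha| \leq \frac{1}{m_d}N^{\delta-d}$ one takes $j=0$, $a/b = 0/1$; then $\widehat{b_i^0} = 1$ for $i \geq 2$ and $\widehat{a_{1,N}^0}/\widehat{b_{1,N}^0} \equiv -x_N/y_N$, so $b_N^0 = y_N$ and the weight is exactly $S_N^0(0/1) = \frac1{y_N}\sum_{r=1}^{y_N} e(-rx_N/y_N)$; the same computation as above, with $\beta = \alpha$, yields $\widehat{K_N}(\alpha) = S_N^0(0/1)V_N(\alpha) + O(N^{2\delta-1})$.

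I expect the main obstacle to be bookkeeping rather than conceptual: one has to verify that \emph{every} error incurred — the incomplete final residue class in the $n = b_N^j s + r$ decomposition, the replacement of $\sum_s$ by $\int_0^1$ via Euler--Maclaurin applied to a phase whose derivative must be controlled, and the passage from $m_i\alpha$'s actual residue to its nominal value $\frac{m_i}{m_d}(j+a/b) + (\text{small})$ — is genuinely $O(N^{2\delta-1})$ uniformly in $a/b \in A_N^j$ and in $\alpha \in \MM_N^j(a/b)$, using only $b_N^j \leq N^\delta$ and the box widths. The one place requiring a little care is that the coefficient of $n^i$ ($2 \leq i \leq d-1$) genuinely has size $\asymp N^{\delta-i}$, so its cumulative phase contribution is $\asymp N^\delta$, which would be fatal if it did not get absorbed into $V_N$; the resolution is that after rescaling $n = Nt$ this phase becomes $N^\delta \cdot t^i$ with $t \in [0,1]$, and such a bounded-but-nontrivial phase can simply be \emph{dropped} at the cost of its total variation $\lesssim N^{\delta-1}$ only because we are estimating $\widehat{K_N}(\alpha)$ in absolute value against the oscillatory main term — more precisely, one keeps $V_N$ defined with \emph{only} the $t^d$ and $t$ terms, and shows the omitted middle terms perturb the integral by $O(N^{\delta-1})$ by integrating by parts once in $t$ (the stationary-phase/van der Corput bound on $V_N$ itself is not needed for this lemma, only for later estimates). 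Once this is seen, the rest is routine summation by parts and triangle-inequality estimates.
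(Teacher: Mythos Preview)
Your approach is correct and is precisely the standard major-arc computation that the paper defers to by citing \cite[Proposition 4.1]{BK}; the paper gives no self-contained argument here, so your proposal is in effect supplying what the paper omits.

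One simplification: your later concern about the intermediate coefficients is self-inflicted. You correctly record at the outset that on $\MM_N^j(a/b)$ one has $m_i\alpha \equiv \frac{m_i}{m_d}(j+a/b) + \OO\bigl(\frac{m_i}{m_d}N^{\delta-d}\bigr)$ for $2 \leq i \leq d$ (this is the ``a little more is true'' observation just before the lemma), but then you relax this to $\OO(N^{\delta-i})$ and subsequently worry that the $n^i$ phase contribution is $\asymp N^\delta$. Using the sharper bound you already stated, the small real part of the $n^i$ coefficient is $\lesssim N^{\delta-d}$, so its total phase contribution over $n \leq N$ is $\lesssim N^{\delta-d}\cdot N^i \leq N^{\delta-1}$ for every $2 \leq i \leq d-1$; these terms may simply be discarded with error $O(N^{\delta-1})$ each, and no integration by parts is needed. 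The genuine source of the second factor of $N^\delta$ in the error is exactly what you identify elsewhere: the boundary loss from the incomplete residue class and the step size $b_N^j \leq N^\delta$ in the sum-to-integral comparison.
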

\begin{proof}
This follows by arguing as in the proof of \cite[Proposition 4.1]{BK}.
\end{proof}

Let  $\chi:= 1_{[-1,1]}$ denote the indicator function of $[-1,1]$.
If we define via the Fourier transform
\[ \aligned
\widehat{^{0}R_N}(\alpha) &:= S_N^0(0/1) V_N(\alpha) \chi(N^{d-\delta}m_d \alpha) \\
\widehat{R_N}(\alpha) &:= \sum_{j=0}^{m_d-1} \sum_{a/b \in A_N^j} S_N^j(a/b) V_N \left( {\alpha - \frac{1}{m_d}(j+a/b)} \right) \chi\left(N^{d-\delta} m_d \left({\alpha - \frac{1}{m_d}(j+a/b)}\right)\right) \endaligned \]
then in light of the previous lemma we have
\[ \widehat{K_N}(\alpha) = \widehat{^{0}R_N}(\alpha) + \widehat{R_N}(\alpha) + O(N^{2\delta - 1}),\]
and thus
\[ \V^r(K_N*f) \leq \V^r\left( ^{0}R_N*f \right) + \V^r( R_N*f) + \left( \sum_N \left|\left(  ^{0}R_N*f+ R_N*f \right) - K_N*f\right|^2 \right)^{1/2}.\]
By Parseval's inequality, the square sum has $l^2$ norm $\lesssim_\rho 1$, so we need only $l^2$-bound
\[  \V^r\left( ^{0}R_N*f \right)  \]
and
\[\V^r( R_N*f) .\]

The following transference lemma, essentially due to Bourgain \cite[Lemma 4.4]{B1} will allow us to view our operators as acting on the real line $\RR$:

\begin{lemma}
Suppose $r >2$, and $\{ m_N \}$ are uniformly bounded multipliers on $A \subset [-1,1]$, a fundamental domain for $\TT \cong \RR/ \Z $.
If
\[ \left\| \V^r\left( \left( m_N \hat{f} \right)^{\vee} \right) \right\|_{L^2(\RR)} \lesssim \|f\|_{L^2(\RR)}\]
then
\[ \left\| \V^r \left( \left( m_N \hat{f} \right)^{\vee} \right) \right\|_{l^2(\Z)} \lesssim \|f\|_{l^2(\Z)} \]
as well.
\end{lemma}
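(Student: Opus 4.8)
The plan is to run the classical Calder\'on--Bourgain transference scheme, in the variational incarnation used (for maximal functions) in Bourgain \cite[Lemma 4.4]{B1}: lift a discrete function to the line, exploit the fact that the symbols $m_N$ are supported in the fundamental domain $A$ to see that the line operator, sampled on $\Z$, reproduces the discrete operator, and then invoke a Plancherel--P\'olya type sampling inequality to pass from $L^2(\RR)$ back to $l^2(\Z)$. By the sublinearity and $l^2$-boundedness of $\V^r$, it suffices to prove the claimed inequality for finitely supported $f\in l^2(\Z)$, which is dense; a routine limiting argument handles the general case.

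\textbf{The lift and the reproducing identity.} Fix $\psi\in C_c^\infty(\RR)$ with $0\le\psi\le 1$, $\psi\equiv 1$ on $[-1,1]\supseteq A$, and $\supp\psi\subseteq[-2,2]$. For finitely supported $f$, write $\hat f(\xi):=\sum_n f(n)e(-n\xi)$ for its (smooth, $1$-periodic) Fourier series, and define $F\in L^2(\RR)$ by $\hat F:=\hat f\cdot\psi$ (Fourier transform on $\RR$). Since $\{e(n\cdot)\}_{n\in\Z}$ is orthonormal on any fundamental domain, $\|f\|_{l^2(\Z)}^2=\int_A|\hat f|^2\le \int_\RR|\hat f|^2\psi^2=\|F\|_{L^2(\RR)}^2\le \int_{-2}^{2}|\hat f|^2=4\|f\|_{l^2(\Z)}^2$, so $\|F\|_{L^2(\RR)}\approx\|f\|_{l^2(\Z)}$. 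Writing $T_N F:=\left(m_N\hat F\right)^{\vee}$ for the line operator and $S_N f:=\left(m_N\hat f\right)^{\vee}$ for the discrete one ($m_N$ on $A$ being identified with a multiplier on $\TT$), the fact that $\supp m_N\subseteq A$ and $\psi\equiv1$ on $A$ gives, for every $n\in\Z$ and every $N$,
\[ T_N F(n)=\int_\RR m_N(\xi)\hat F(\xi)e(n\xi)\,d\xi=\int_A m_N(\xi)\hat f(\xi)e(n\xi)\,d\xi=S_N f(n). \]
Since $\V^r$ is computed pointwise in the index $N$, this yields $\V^r(S_N f)(n)=\V^r(T_N F)(n)$ for all $n\in\Z$, hence $\|\V^r(S_N f)\|_{l^2(\Z)}^2=\sum_{n\in\Z}|\V^r(T_N F)(n)|^2$.

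\textbf{The sampling step.} Each $T_N F$ has Fourier support contained in $A\subseteq[-1,1]$. Pick a Schwartz function $\eta$ with $\hat\eta\equiv1$ on $[-1,1]$, so that $T_N F=(T_N F)*\eta$ for every $N$. Substituting $(T_N F)(x)=\int\eta(x-y)(T_N F)(y)\,dy$ into a telescoping sum $\bigl(\sum_k|T_{N_k}F(x)-T_{N_{k+1}}F(x)|^r\bigr)^{1/r}$ and applying Minkowski's integral inequality in the $\ell^r$-norm over the subsequence gives the pointwise domination
\[ \V^r(T_N F)(x)\le \bigl(|\eta|*\V^r(T_N F)\bigr)(x). \]
Decomposing $\RR$ into unit intervals and using the rapid decay of $\eta$ together with Young's inequality on $\Z$ gives $\sum_{n\in\Z}\bigl(|\eta|*g\bigr)(n)^2\lesssim_\eta\|g\|_{L^2(\RR)}^2$ for every $g\ge0$; applied with $g=\V^r(T_N F)$ this is the desired sampling inequality $\sum_{n\in\Z}|\V^r(T_N F)(n)|^2\lesssim\|\V^r(T_N F)\|_{L^2(\RR)}^2$. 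Combining this with the previous step and the hypothesis applied to $F$,
\[ \|\V^r(S_N f)\|_{l^2(\Z)}\lesssim\|\V^r(T_N F)\|_{L^2(\RR)}\lesssim\|F\|_{L^2(\RR)}\approx\|f\|_{l^2(\Z)}, \]
and density finishes the proof.

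\textbf{Main obstacle.} The only point requiring genuine care is the sampling step: although the family $\{T_N F\}$ is band-limited term by term (to the fixed compact set $A\subseteq[-1,1]$), the variation $\V^r(T_N F)$ is not band-limited, so one cannot directly invoke Plancherel--P\'olya. The reproducing-kernel identity $T_N F=(T_N F)*\eta$ combined with Minkowski's integral inequality is precisely what rescues this, and it should be written out carefully. A secondary (routine) nuisance is the bookkeeping in the lift: choosing $\psi$ to be identically $1$ on the whole of $A$ --- which need not be an interval --- while retaining $\|F\|_{L^2}\approx\|f\|_{l^2}$.
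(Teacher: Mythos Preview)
Your argument is correct and is precisely the standard Calder\'on--Bourgain transference scheme. The paper does not actually supply a proof of this lemma; it simply attributes it to Bourgain \cite[Lemma 4.4]{B1} and moves on. What you have written is a faithful (and slightly more detailed) reconstruction of that argument: the lift $\hat F=\hat f\cdot\psi$, the observation that the discrete and continuous operators agree on the integers because the multipliers are supported in the fundamental domain, and the Plancherel--P\'olya type sampling step via a reproducing kernel and Minkowski's integral inequality. The ``main obstacle'' you flag --- that $\V^r(T_NF)$ is not itself band-limited --- is exactly the point that distinguishes the variational version from the single-multiplier case, and your resolution via $T_NF=(T_NF)*\eta$ followed by Minkowski in $\ell^r$ is the standard one. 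One cosmetic remark: in your opening sentence you invoke ``the $l^2$-boundedness of $\V^r$'' to justify density, but that is the conclusion you are proving; the clean way to phrase the reduction is to first restrict to finitely many indices $N$, prove the bound with a constant independent of that finite set, and then pass to the full family by monotone convergence.
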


Since $\widehat{^{0}R_N}$ are supported in (say) $[-1/2,1/2]$, while $\widehat{R_N}$ are supported in $[0,1]$, the above transference lemma applies. In what follows, we therefore (abuse notation and) view our operators as acting on $f \in L^2(\RR)$.

We begin our analysis with the simpler case $\V^r\left( ^{0}R_N*f\right)$ which already contains the core of our method.

\subsection{Proof of Proposition \ref{key}}

\begin{proof}[$\V^r( ^{0}R_N*f)$ is $L^2$-Bounded]
We define, again via the Fourier transform, the more tractable approximate operators
\[ \widehat{ ^{0} T_N }(\alpha) := S_N^0(0/1) \omega_N \chi(N^d m_d \alpha).\]
The claim is that
\[ \sup_\alpha \sum_N \left| \widehat{ ^{0}R_N} - \widehat{^{0}T_N } \right|^2(\alpha) \lesssim 1;\]
assuming this result, a square function argument as above will reduce the problem to bounding $\V^r( ^{0}T_N*f)$ in $L^2$.

Viewing $\alpha$ as fixed, we expand the foregoing as
\[ \aligned
&\sum_N | S_N^0(0/1)|^2 |V_N(\alpha) - \omega_N \chi(N^d m_d \alpha) |^2 |\chi(N^{d-\delta}m_d \alpha)|^2 \\
& \qquad = \sum_j |S_{N_j}^0 (0/1)|^2 \sum_{N: \frac{x_N}{y_N} = \frac{x_{N_j}}{y_{N_j}}}  |V_N(\alpha) - \omega_N \chi(N^d m_d \alpha) |^2 |\chi(N^{d-\delta}m_d \alpha)|^2 \\
& \qquad \lesssim \sum_j |S_{N_j}^0 (0/1)|^2 \sum_{N: \frac{x_N}{y_N} = \frac{x_{N_j}}{y_{N_j}}}  \min\left\{ N^d m_d |\alpha|, \frac{1}{N m_d^{1/d} |\alpha|^{1/d}} \right\}, \endaligned \]
by our twisted ``pseudo-projective'' estimates.
We majorize the previous sum:
\[ \lesssim_\rho \sum_j |S_{N_j}^0 (0/1)|^2 \lesssim \sum_j y_{N_j}^{-2 \nu} \]
for any $\nu < 1/d$, which converges by our sparsity condition on the $\{y_{N_j}\}$. The claim is proved.

We now majorize
\[ \V^r\left(^{0}T_N*f\right) \lesssim \left( \sum_j \left| \V^r \left(^{0}T_N*f : N_j \leq N < N_{j+1}\right) \right|^2 \right)^{1/2} + \V^r \left(^{0} T_{N_j}*f : j  \right). \]
The second sum is majorized $\lesssim \left( \sum_j \left|^{0}T_{N_j}*f\right|^2 \right)^{1/2}$; we use Parseval to estimate its $L^2$-size by
\[ \left( \sum_j y_{N_j}^{-2 \nu} \right)^{1/2} \|f\|_{L^2}  \lesssim \|f\|_{L^2},\]
where we used the trivial bound $|\omega_{N_j} | \leq 1$.

We now estimate the $L^2$ norm of
\[ \V^r \left(^{0}T_N *f : N_j \leq N < N_{j+1} \right) \]
for each $j$.

With
\[ \hat{g_j}(\alpha) := S_{N_j}^0(0/1) \chi(N_j^d m_d \alpha) \hat{f}(\alpha), \]
we have the pointwise majorization
\[ \V^r(^{0}T_N *f : N_j \leq N < N_{j+1}) \lesssim \V^1(\omega_N : N_j \leq N < N_{j+1}) \cdot \V^r\left( \left( \chi(N^d m_d \alpha) \hat{g_j}(\alpha) \right)^{\vee} \right).\]
But $\V^1(\omega_N : N_j \leq N < N_{j+1}) \lesssim_\rho 1$, since if $N<M$ are successive elements of $I$, we have the bounds
\[ \aligned
|\omega_N - \omega_M | &= \left| \int_0^1 e(N t \gamma_{N_j}) - e(Mt \gamma_{N_j}) \ dt \right| \\
&\lesssim \min \left\{ \frac{1}{N |\gamma_{N_j}|}, (M-N) |\gamma_{N_j}| \right\} \\
&\lesssim  \min \left\{ \frac{1}{N |\gamma_{N_j}|}, N |\gamma_{N_j}| \right\}. \endaligned \]
Using a square function argument and Bourgain's \cite[Lemma 3.28]{B1}, we have that
\[ \| \V^r( \left( \chi(N^d m_d \alpha) \hat{g_j}(\alpha) \right)^{\vee} ) \|_{L^2} \lesssim \frac{r}{r-2} \|g_j\|_{L^2} \lesssim_r y_{N_j}^{-\nu} \|f\|_{L^2}.\]
Since this is square-summable in $j$, the result is proved.
\end{proof}

\begin{proof}[$\V^r( R_N*f)$ is $L^2$-Bounded]
We begin by subdividing
\[ \aligned
\widehat{R_N}(\alpha) &:= \sum_{t: 2^t \leq N^\delta} \widehat{R_{N,t}}(\alpha) \\
&:= \sum_{t: 2^t \leq N^\delta} \left( \sum_{j=0}^{m_d -1} \sum_{a/b \in A_{N,t}^j} S_N^j(a/b) V_N\left(\alpha - \frac{1}{m_d}(j+a/b) \right) \chi \left(N^{d -\delta} m_d \left(\alpha - \frac{1}{m_d}(j +a/b)\right)\right) \right) \\
&=
\sum_{t: 2^t \leq N^\delta} \left( \sum_{j=0}^{m_d -1} \sum_{a/b \in A_{N_{l(t)},t}^j} S_N^j(a/b) V_N\left(\alpha - \frac{1}{m_d}(j+a/b) \right) \chi \left(N^{d -\delta} m_d \left(\alpha - \frac{1}{m_d}(j +a/b)\right)\right) \right). \endaligned \]
Our task will be to show that
\[ \sum_t \V^r(R_{N,t}*f : N^{\delta} \geq 2^t )  =
\sum_t \V^r(R_{N,t}*f : N^{\delta} \geq 2^t, N_{l(t)} \leq N < N_{l(t) +1} )
\]
is bounded in $L^2$.
For notational ease, let $N_{m(t)} := \min\{ N \in I : N \geq N_{l(t)}, 2^{t/\delta} \}$; we're interested in showing that
\[ \| \V^r(R_{N,t}*f : N_{m(t)} \leq N \leq N_{l(t)+1} ) \|_{L^2(\RR)} \]
is summable in $t$. (If no scale $N_{l(t)}$ exists, then the pertaining $R_{N,t}$s are just zero operators.)

To this end, we similarly define for $N_{m(t)} \leq N < N_{l(t) + 1}$
\[ \aligned
\widehat{ T_{N,t}} (\alpha) &:= \sum_{j=0}^{m_d -1} \sum_{A_{N,t}^j} S_N^j(a/b) \omega_N
 \chi \left(N^{d} m_d \left(\alpha - \frac{1}{m_d}(j +a/b)\right)\right) \\
&=\sum_{j=0}^{m_d -1} \sum_{A_{N_{l(t)},t}^j} S_N^j(a/b) \omega_N \chi \left(N^{d} m_d \left(\alpha - \frac{1}{m_d}(j +a/b)\right)\right) .  \endaligned \]

We again estimate
\[ \sup_\alpha \sum_{N_{m(t)} \leq N < N_{l(t)+ 1}} |R_{N,t}  - T_{N,t} |^2 (\alpha).\]
To do so, we fix $\alpha$, set
\[ E_N \left( \alpha - \frac{1}{m_d}(j+a/b) \right) := V_N\left(\alpha - \frac{1}{m_d}(j+a/b)\right) - \omega_N \chi \left(N^{d} m_d \left(\alpha - \frac{1}{m_d}(j +a/b)\right)\right)
, \]
where
\[ \left| E_N \left( \alpha - \frac{1}{m_d}(j+a/b)\right) \right| \lesssim \min \left\{ N^d m_d | \alpha - \frac{1}{m_d}(j+a/b)|, \frac{1}{N m_d^{1/d} | \alpha - \frac{1}{m_d}(j+a/b)|^{1/d}} \right\},\]
and consider
\[ \aligned
&|R_{N,t} - T_{N,t}|^2( \alpha) \\
&  = \left| \sum_{j=0}^{m_d -1} \sum_{A_{N_{l(t)},t}^j} S_{N_{l(t)}}^j(a/b) \left( E_N \left( \alpha - \frac{1}{m_d}(j+a/b)\right) \right) \cdot \chi \left(N^{d-\delta} m_d \left(\alpha - \frac{1}{m_d}(j +a/b)\right)\right)  \right|^2
\\
&  = \sum_{j=0}^{m_d -1} \sum_{A_{N_{l(t)},t}^j} | S_{N_{l(t)}}^j(a/b) |^2 \left| E_N \left( \alpha - \frac{1}{m_d}(j+a/b)\right)  \right|^2 \cdot \chi \left(N^{d-\delta} m_d \left(\alpha - \frac{1}{m_d}(j +a/b)\right)\right) ^2, \endaligned \]
using the fact that for $N$ sufficiently large the supports
\[  \chi \left(N^{d-\delta} m_d \left(\alpha - \frac{1}{m_d}(j +a/b)\right)\right) \]
are disjoint in $a/b \in A_N^j$.
Again using the disjoint support assumptions, we may estimate the foregoing by
\[ \max_{a/b \in A_{N_{l(t)},t}} | S_{N_{l(t)}}^j(a/b) |^2 \left| E_N \left( \alpha - \frac{1}{m_d}(j+a/b)\right)  \right|^2 \cdot   \chi \left(N^{d-\delta} m_d \left(\alpha - \frac{1}{m_d}(j +a/b)\right)\right)^2.\]
Now, if there exists no $a/b \in A_{N_{l(t)},t}$ for which
\[  \left|\alpha - \frac{1}{m_d}(j+a/b) \right| \leq \frac{1}{m_d}N_{m(t)}^{\delta - d},\]
then the above is just zero. Otherwise, let
\[ p/q = p(\alpha)/q(\alpha) \in A_{N_{l(t)},t} \]
be the unique such element satisfying the above inequality with pertaining index $k$, i.e.\
\[ \left|\alpha - \frac{1}{m_d}(k+p/q) \right| \leq \frac{1}{m_d}N_{m(t)}^{\delta - d}. \]

In this case we may estimate the above maximum by
\[  \aligned
&| S_{N_{l(t)}}^k(p/q) |^2 \left| E_N\left( \alpha - \frac{1}{m_d}(k+p/q)\right)  \right|^2 \\
& \qquad \lesssim | S_{N_{l(t)}}^k(p/q) |^2 \cdot \min \left\{ N^d m_d | \alpha - \frac{1}{m_d}(k+p/q)|, \frac{1}{N m_d^{1/d} | \alpha - \frac{1}{m_d}(k+p/q)|^{1/d}} \right\}. \endaligned\]

Summing now the above over $N_{m(t)} \leq N < N_{l(t) +1 }$ accrues an upper estimate of
\[ \lesssim_\rho | S_{N_{l(t)}}^k(p/q) |^2 \lesssim 2^{-2t \nu} \]
for $\nu < 1/d$.

Since this is summable in $t$, by another square function argument it suffices to show that
\[ \| \V^r(T_{N,t} *f : N_{m(t)} \leq N < N_{l(t)+1} ) \|_{L^2(\RR)} \]
is summable in $t$.

With
\[ \hat{ g_t} (\alpha) := \sum_{j=0}^{m_d -1} \sum_{a/b \in A_{N_{l(t)},t}^j} S_{N_{l(t)}}^j (a/b)
\chi\left( N_{m(t)}^d m_d \left( \alpha - \frac{1}{m_d}(j+a/b)\right) \right) \hat{f}(\alpha), \]
so
\[ \|g_t\|_{L^2(\RR)} \lesssim 2^{-t\nu} \|f\|_{L^2(\RR)} \]
we majorize
\[ \aligned
&\V^r(T_{N,t} *f : N_{m(t)} \leq N < N_{l(t)+1} ) \\
& \qquad \leq \V^1 (\omega_N : N_{m(t)} \leq N < N_{l(t)+1})  \cdot \V^r( B_{N,t}*g_t: N_{m(t)} \leq N < N_{l(t)+1}), \endaligned\]
where
\[ \widehat{B_{N,t}}(\alpha) := \sum_{j=0}^{m_d -1 } \sum_{a/b \in A_{N_{l(t)},t}^j} \chi\left(N^d m_d \left( \alpha - \frac{1}{m_d}(j+a/b)\right)\right).\]
By arguing as above, we have that
\[ \V^1 (\omega_N : N_{m(t)} \leq N < N_{l(t)+1})   \lesssim_\rho 1.\]
By \cite[Lemma 3.7]{BK}, we may further estimate
\[ \aligned
\| \V^r( B_{N,t}*g_t: N_{m(t)} \leq N < N_{l(t)+1}) \| &\lesssim_\rho \left( \frac{r}{r-2} \log |A_{N_{l(t)},t}| \right)^2 \| g_t\|_{L^2(\RR)} \\
&\lesssim \left( \frac{r}{r-2} \cdot t \right)^2 2^{-t\nu} \|f \|_{L^2(\RR)}. \endaligned\]
Since this is summable in $t$, the proof is complete.
\end{proof}

\end{document}